\documentclass[preprint,12pt]{elsarticle}
\usepackage[top=2.5cm, bottom=2.5cm, left=3cm, right=3cm]{geometry}   
\usepackage{amsmath,amsthm,amsfonts,amssymb,mathrsfs}
\usepackage{mathtools}
\usepackage{hyperref}

\usepackage{makecell}
\usepackage{tabularx}
\usepackage{xcolor}
\usepackage{seqsplit}

\newtheorem*{theorem*}{Theorem}
\newtheorem{theorem}{Theorem}[section]
\newtheorem{definition}[theorem]{Definition}
\newtheorem{example}[theorem]{Example}

\newtheorem{remark}[theorem]{Remark}
\newtheorem{lemma}[theorem]{Lemma}
\newtheorem{corollary}[theorem]{Corollary}
\newtheorem{proposition}[theorem]{Proposition}
\journal{Finite Fields and Their Applications}

\DeclarePairedDelimiter\ceil{\lceil}{\rceil}
\DeclarePairedDelimiter\floor{\lfloor}{\rfloor}

\def\la{\lambda}

\def\P{\mathbf P}
\def\N{\mathbb N}
\def\Z{\mathbb Z}
\def\Div{{\rm Div}}
\def\deg{{\rm deg}\,}
\def\lub{{\rm lub}}
\def\n{\mathbf n}

\def\v{\mathbf v}

\def\j{\mathbf j}
\def\a{\mathbf a}

\begin{document}
\begin{frontmatter}

%% Title, authors and addresses

%% use the tnoteref command within \title for footnotes;
%% use the tnotetext command for theassociated footnote;
%% use the fnref command within \author or \affiliation for footnotes;
%% use the fntext command for theassociated footnote;
%% use the corref command within \author for corresponding author footnotes;
%% use the cortext command for theassociated footnote;
%% use the ead command for the email address,
%% and the form \ead[url] for the home page:
%% \title{Title\tnoteref{label1}}
%% \tnotetext[label1]{}
%% \author{Name\corref{cor1}\fnref{label2}}
%% \ead{email address}
%% \ead[url]{home page}
%% \fntext[label2]{}
%% \cortext[cor1]{}
%% \affiliation{organization={},
%%             addressline={},
%%             city={},
%%             postcode={},
%%             state={},
%%             country={}}
%% \fntext[label3]{}

\title{Weierstrass semigroups at totally ramified places of degree one on Kummer extensions}

%% use optional labels to link authors explicitly to addresses:
%% \author[label1,label2]{}
%% \affiliation[label1]{organization={},
%%             addressline={},
%%             city={},
%%             postcode={},
%%             state={},
%%             country={}}
%%
%% \affiliation[label2]{organization={},
%%             addressline={},
%%             city={},
%%             postcode={},
%%             state={},
%%             country={}}

\author[1]{Huachao Zhang}
\ead{zhanghch56@mail2.sysu.edu.cn}

\author[1,2]{Chang-An Zhao\corref{cor1}}
\ead{zhaochan3@mail.sysu.edu.cn}
\cortext[cor1]{Corresponding author.}
%% Author affiliation
\affiliation[1]{organization={School of Mathematics, Sun Yat-sen University},%Department and Organization
            city={Guangzhou},
            postcode={510275}, 
            state={Guangdong},
            country={China}}
\affiliation[2]{organization={Guangdong Key Laboratory of Information Security Technology},%Department and Organization
            city={Guangzhou},
            postcode={510006}, 
            state={Guangdong},
            country={China}}           

%% Abstract
\begin{abstract}
    We explicitly describe the set of gaps and the Weierstrass semigroup at a totally ramified place of degree one on a Kummer extension defined by the affine equation
    $y^m = f(x)$ over $K$, an algebraic extension of $\mathbb{F}_q$, where $f(x)\in K(x)$.
    Our description takes a unified form for distinct totally ramified places of degree one.
    We then provide a necessary and sufficient condition for the Weierstrass semigroup at a totally ramified place of degree one to be symmetric.
	Furthermore, we investigate the minimal generating set of the Weierstrass semigroups at many totally ramified places of degree one.
	We not only explicitly describe the minimal generating set, but also construct functions whose pole divisors have coefficients lying in the set.
    Finally, we apply our results to specific Kummer extensions, including function fields of GGS curves and subcovers of the BM curve.
\end{abstract}

%% Keywords
\begin{keyword}
Algebraic function fields, Kummer extensions, Gaps, Weierstrass semigroups, Algebraic curves
\end{keyword}

\end{frontmatter}

\section{Introduction}
	Let $K$ be an algebraic extension of $\mathbb{F}_q$, let $F/K$ be a function field with full constant field $K$, and let $P$ be a rational place of $F$. 
    The Weierstrass semigroup at $P$, denoted by $H(P)$, is a classical object in the theory of algebraic function fields and algebraic curves; for example, see \cite{castellanosWeierstrassSemigroupsPure2024,mendozaKummerExtensions2023,abdonWeierstrassPoints2019,beelenWeierstrassSemigroups2021,bartoliWeierstrassSemigroups2021a}.
    The theory of the Weierstrass semigroup at a rational place has numerous applications. For instance, it can be used to construct algebraic geometry codes with good parameters (see \cite{castellanosOneTwoPointCodes2016,castellanosWeierstrassSemigroupsPure2024,montanucciAGCodes2020});
    determine the automorphism group of an algebraic curve or a function field (see \cite{maAutomorphismGroups2016a,montanucciAutomorphismGroup2024,beelenWeierstrassSemigroups2023,beelenWeierstrassSemigroups2026a,beelenWeierstrassSemigroups2026});
    classify maximal function fields (see \cite{niemannNonisomorphicMaximal2025,beelenFamilyNonisomorphic2025}).

    The complementary set $G(P) := \N_0\setminus H(P)$ is called the set of gaps at $P$. If $F$ has genus $g\geq 1$, then $G(P)$ contains exactly $g$ elements.
    The largest element in $G(P)$, denoted by $F_{H(P)}$, is called the Frobenius number of $H(P)$. 
    We say that the Weierstrass semigroup $H(P)$ is symmetric if $F_{H(P)} = 2g-1$.
    Several results exist concerning the symmetry of Weierstrass semigroups. For the unique infinite place $Q_\infty$, Mendoza discussed sufficient conditions for $H(Q_\infty)$ to be symmetric in a Kummer extension \cite{mendozaKummerExtensions2023}; 
    G{\"u}neri, {\"O}zdemiry and Stichtenoth showed that $H(Q_\infty)$ is symmetric on the GGS curve \cite{guneriAutomorphismGroup2013}.
	For a totally ramified place $Q$ of degree one in a Kummer extension, Cotterill, Mendoza and Speziali characterized the symmetry of $H(Q)$ in certain cases \cite{cotterillGapSets2025}.

	Let $P_1,\cdots,P_s$ be $s$ distinct rational places of $F$. The Weierstrass semigroup $H(P_1,\cdots,P_s)$ at several rational places has been widely studied in recent decades. In \cite{arbarelloGeometryAlgebraic1985}, Arbarello et al. initiated the theory of the Weierstrass semigroup at two rational places.
	This line of work was subsequently advanced by Kim \cite{kimIndexWeierstrass1994} and Homma \cite{hommaWeierstrassSemigroup1996}.
	The Weierstrass semigroup at several rational places was first considered by Carvalho and Torres \cite{carvalhoGoppaCodes2005}. In \cite{matthewsWeierstrassSemigroup2004}, Matthews introduced the notion of a minimal generating set for $H(P_1,\cdots,P_s)$, denoted by $\tilde{\Gamma}(P_1,\cdots,P_s)$, which is completely determined by certain special sets $\Gamma(P_1,\cdots,P_k)$ for $1\leq k\leq s$.
	By computing $\Gamma(P_1,\cdots,P_k)$ for $1\leq k\leq s$, Matthews determined the Weierstrass semigroup of any $s$-tuple of collinear rational places on a Hermitian curve.
	This result was then generalized to $s$-tuples of places on a quotient of the Hermitian curve in \cite{matthewsWeierstrassSemigroups2005}, and to those on Norm-trace curves in \cite{matthewsWeierstrassSemigroups2009,matthewsMinimalGenerating2010}.

	The Weierstrass semigroups at several rational places on other curves and function fields have also been well studied in the literature.
	For instance, Hu and Yang described the Weierstrass semigroups at several totally ramified places on specific Kummer extensions \cite{yangWeierstrassSemigroups2017,huMultipointCodesKummer2018}, 
	GGS curves \cite{huMultipointCodes2020}, and the third function field in a tower attaining the Drinfeld-Vl\u adu\c t bound \cite{yangWeierstrassSemigroups2024a}.
	In \cite{tizziottiWeierstrassSemigroup2018,castellanosWeierstrassSemigroup2018}, Tizziotti and Castellanos determined the Weierstrass semigroups at several rational places on GK curves and curves of the form $f(x) = g(x)$.
	Castellanos and Bras-Amor{\'o}s determined the Weierstrass semigroups at several rational places on the maximal curves that cannot be covered by the Hermitian curve in \cite{castellanosWeierstrassSemigroup2020}.
	Weierstrass semigroups at several places can be used to improve the minimum distance of an algebraic geometry code; see, for example, \cite{carvalhoGoppaCodes2005,duursmaImprovedTwoPoint2011,matthewsWeierstrassPairsMinimum2001,matthewsCodesSuzuki2004,sepulvedaWeierstrassSemigroup2014}.

	Let $f(x)\in K(x)$ and $m\geq 2$ with $\gcd(m,q)=1$. Consider the Kummer extension $F = K(x,y)/K(x)$ defined by the equation 
    \begin{equation}\label{kummer_extension}
    y^m = f(x) = \alpha\cdot\prod_{i=1}^rp_i(x)^{\lambda_i},
	\end{equation}
    where $\alpha\in K$, $\lambda_i\in \Z\setminus\{0\}$ for all $1\leq i \leq r$, and $p_1(x),\cdots,p_r(x)\in K[x]$ are pairwise distinct monic irreducible polynomials.
	The investigation of Weierstrass semigroups and sets of gaps on Kummer extensions has attracted attention in recent years.
    
	Assume that $\alpha=1$ and $\deg p_i(x) = 1$ for all $1\leq i\leq r$. If the infinite place is totally ramified in $F/K(x)$, we denote it by $Q_\infty$.
	Let $Q_1 \neq Q_\infty$ be another totally ramified place in $F/K(x)$.
	In Table \ref{table1}, the Weierstrass semigroup and the set of gaps at a totally ramified place have been explicitly described for some cases. Moreover, let $Q_1,\cdots,Q_s$ be $s$ totally ramified places distinct from $Q_\infty$ in $F/K(x)$.
	In Table \ref{table2}, the sets $\Gamma(Q_1,\cdots,Q_s)$ and $\Gamma(Q_\infty,Q_1,\cdots,Q_s)$ have been explicitly described for some cases.
	
    \begin{table}[htp]
	\centering
	\caption{The Weierstrass semigroup and gap set at a totally ramified place from the literature}
	\vspace{6pt}
	\renewcommand{\arraystretch}{1.1}
	\begin{tabular}{|l|l|l|}
	\hline
	Reference & Case & Set\\ \hline
	\cite[Theorem 3.2]{castellanosOneTwoPointCodes2016} & $\lambda_1 = \cdots = \lambda_r$ & $H(Q_\infty)$\\ \hline
	\cite[Theorem 3.4]{castellanosOneTwoPointCodes2016} & $\lambda_1 = \cdots = \lambda_r$ and $m\equiv 1\pmod r$  &  $H(Q_1)$\\\hline
    \cite[Remark 2.8]{tafazolianCurveYn2019} & $1\leq \lambda_1\leq m-1$ and  $\lambda_2 = \cdots = \lambda_r = 1$  & $H(Q_\infty)$ \\ \hline
	\cite[Theorem 3.2]{mendozaKummerExtensions2023} &  $1\leq \lambda_i\leq m-1$ for $1\leq i\leq r$  & $H(Q_\infty)$ \\ \hline
	\cite[Corollary 2]{huMultipointCodesKummer2018} &$\lambda_1=\cdots = \lambda_r$ & $G(Q_1)$\\ \hline
	\cite[Proposition 4.3]{castellanosWeierstrassSemigroupsPure2024} & $1\leq \lambda_i\leq m-1$ for $1\leq i\leq r$ & $G(Q_\infty)$\\\hline
	\cite[Proposition 4.4]{castellanosWeierstrassSemigroupsPure2024} & $1\leq \lambda_i\leq m-1$ for $1\leq i\leq r$ & $G(Q_1)$\\
	\hline
	\end{tabular}
	\label{table1}
    \end{table}
	
	\begin{table}[htp]
	\centering
	\caption{The sets $\Gamma(Q_1,\cdots,Q_s)$ and $\Gamma(Q_\infty,Q_1,\cdots,Q_s)$ from the literature}
	\vspace{6pt}
	\renewcommand{\arraystretch}{1.1}
	\begin{tabular}{|l|l|l|}
	\hline
	Reference & Case & Set\\ \hline
	\cite[Theorem 4.3]{castellanosOneTwoPointCodes2016} & $\lambda_1 = \cdots = \lambda_r$ & $\Gamma(Q_\infty,Q_1)$\\ \hline
    \cite[Theorem 9]{yangWeierstrassSemigroups2017} & $\lambda_1 = \cdots = \lambda_r$ and $s\geq 2$  &  $\Gamma(Q_1,\cdots,Q_s)$ \\\hline
	\cite[Theorem 10]{yangWeierstrassSemigroups2017} & $\lambda_1 = \cdots = \lambda_r$ and $s\geq 2$  &  $\Gamma(Q_\infty,Q_1,\cdots,Q_s)$ \\\hline
	\cite[Proposition 4.6]{castellanosWeierstrassSemigroupsPure2024} & $1\leq \lambda_i\leq m-1$ for $1\leq i\leq r$ & $\Gamma(Q_1,Q_2)$\\ \hline
	\cite[Proposition 4.7]{castellanosWeierstrassSemigroupsPure2024} & $1\leq \lambda_i\leq m-1$ for $1\leq i\leq r$ & $\Gamma(Q_\infty,Q_1)$\\ \hline
	\cite[Corollary 3.5]{castellanosGeneralizedWeierstrass2026} & \makecell{$1\leq \lambda_i\leq m-1$ for $1\leq i\leq r$ \\and $s\geq 2$} & \makecell{$\Gamma(Q_1,\cdots,Q_s)$ and\\ $\Gamma(Q_\infty,Q_1,\cdots,Q_s)$} \\ \hline
	\end{tabular}
	\label{table2}
    \end{table}
    
	All of the cases in Tables \ref{table1} and \ref{table2} require $\deg p_i(x) = 1$ for all $1\leq i\leq r$, that is, $f(x)$ is completely split over $K$.
	Some of the cases in Tables \ref{table1} and \ref{table2} require the condition that $1\leq \lambda_i\leq m-1$ for all $1\leq i\leq r$, that is, $f(x)\in K[x]$.
    In this work, our main interest is the study of the Kummer extension defined by \eqref{kummer_extension}, where $f(x)\in K(x)$ is not required to completely split over $K$.
    Inspired by the study of \cite{castellanosWeierstrassSemigroupsPure2024}, we describe  the set of gaps at a totally ramified place of degree one, and our description provides a unified formulation.
    As a consequence, for a totally ramified place $Q$ of degree one, we explicitly provide a system of generators, as well as the multiplicity and the Frobenius number, of the Weierstrass semigroup $H(Q)$. Moreover, by employing the relationship between Weil differentials and gaps, we obtain the following necessary and sufficient condition for $H(Q)$ to be symmetric.
    \begin{theorem*}[see Theorem \ref{symmetric_thm}]
       Suppose that $\gcd(m,\lambda_s) = d_s = 1$ for some $0\leq s\leq r$. Then the Weierstrass semigroup $H(Q_s)$ at the totally ramified place $Q_s$ is symmetric if and only if there exists $1\leq u\leq m-1$ such that 
       $$\frac{u\lambda_i}{\gcd(m,\lambda_i)}\equiv \frac{u\lambda_j}{\gcd(m,\lambda_j)}\pmod m$$
       for all $i,j\in \{0,1,\cdots,r\}\setminus\{s\}$.
    \end{theorem*}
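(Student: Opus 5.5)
We use the standard criterion that $H(Q_s)$ is symmetric if and only if there is a canonical divisor $W$ with $W = (2g-2)Q_s$, equivalently a Weil differential $\omega$ with $(\omega) = (2g-2)Q_s$. In the notation of the paper, index $0$ is the place at infinity, with $\lambda_0 = -\sum_{i\ge1}\lambda_i\deg p_i$ and $\deg p_0 = 1$. The set $\{0,\dots,r\}\setminus\{s\}$ indexes the other ramified places.

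We first compute divisors. Write $d_i=\gcd(m,\lambda_i)$. Over $p_i$ there are $d_i$ places (or fewer, of higher degree, with total degree $d_i\deg p_i$), each with ramification index $m/d_i$. Then:
\[
(dx) = -2(x)_\infty + \mathrm{Diff}(F/K(x)),\qquad \mathrm{Diff}=\sum_i \Bigl(\frac{m}{d_i}-1\Bigr)\sum_{P\mid p_i}P .
\]
For $0\le u\le m-1$ and integer vector $\mathbf{e}=(e_i)$, consider $z = y^{u}\prod_{i\ne s} p_i(x)^{e_i}$ (with $p_0$ meaning the infinite contribution; only $i\ge1$ appear as actual factors). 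Its divisor at a place $P\mid p_i$ is $\frac{u\lambda_i+me_i}{d_i}P$. Since $\mathrm{Gal}(F/K(x))$ is cyclic and acts on differentials by eigencharacters, any differential whose divisor is supported at $Q_s$ is, up to a constant, of the form $\omega = z\,dx/p_s(x)^{c}$ for an appropriate $u$. Indeed, $\omega$ lies in a single eigenspace, because its divisor is Galois invariant and the eigencomponents would otherwise have different divisors. So $\omega/dx$ is of the form $y^{u}h(x)$ with $h\in K(x)$.

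Next we require the divisor to vanish away from $Q_s$. At every place over $p_i$ with $i\ne s$, we need
\[
\frac{u\lambda_i+me_i}{d_i} + \frac{m}{d_i}-1 = 0\quad(\text{adjusting } -2 \text{ at } i=0),
\]
that is, $u\lambda_i + m(e_i+1) = d_i$. This is solvable in an integer $e_i$ iff $u\lambda_i\equiv d_i \pmod m$. Dividing by $d_i$, this is equivalent to $u\lambda_i/d_i\equiv 1\pmod{m/d_i}$. Also, $h$ must have no zeros or poles at non-ramified places, so $h$ is a product of the $p_i$. The divisor is then automatically supported on $Q_s$ and has degree $2g-2$, because it is canonical.

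The main obstacle is matching this with the stated condition, which requires a common value modulo $m$ rather than $1$ modulo $m/d_i$. Here we use the hypothesis $d_s=1$. The condition $\sum_{i=0}^r \lambda_i\deg p_i = 0$ and the multiplicative structure suggest reparametrizing $y$. Replace $y$ by $y' = y^{u'}\prod p_i^{t_i}$ with $\gcd(u',m)=1$; this gives the same field and changes $\lambda_i$ to $u'\lambda_i+mt_i$. The stated condition, "there is $u$ with $u\lambda_i/d_i\equiv c$ constant," then translates, after such a substitution and scaling by $c^{-1}$ where invertible, into the existence of a solution to $u\lambda_i\equiv d_i\pmod m$ for all $i\ne s$. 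Conversely, a solution $u$ of the differential conditions gives $u\lambda_i/d_i \equiv 1 \pmod{m/d_i}$. To upgrade this to a single common residue modulo $m$, we would try adjusting $u$ by multiples of $m/\gcd$ and using $d_s=1$ together with the degree relation $\sum_i\lambda_i\deg p_i=0$ to force $d_i$-components to agree.

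I expect this reconciliation to be the delicate step: showing that $u\lambda_i\equiv d_i \pmod m$ for all $i\ne s$ (possibly with a different $u$) is equivalent to the displayed congruence. I would first test it in the case where all $d_i=1$, where both reduce to $u\lambda_i\equiv 1$, that is, all $\lambda_i$ are congruent, and then handle general $d_i$ residue-by-residue via CRT.
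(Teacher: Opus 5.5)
Your reduction is sound, and it reaches the same criterion the paper's proof establishes before its last algebraic step: $H(Q_s)$ is symmetric if and only if there is an integer $v$ with
\[
v\lambda_i\equiv\gcd(m,\lambda_i)\pmod m\qquad\text{for all } i\in\{0,1,\dots,r\}\setminus\{s\}
\]
(the paper writes $v=u\lambda$ with $\lambda\lambda_s\equiv 1\pmod m$). The paper gets there without Galois theory. For every residue class modulo $m$ that contains gaps, it exhibits an explicit regular differential $y^{i\lambda}h_i(x)\,dx$ whose order at $Q_s$ is one less than the largest gap in that class, and then argues from $\deg W=2g-2$. Your eigenspace argument also works, but it needs a primitive $m$-th root of unity in $K$. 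Extend constants to $K(\zeta_m)$ first; this changes neither $g$ nor any $\ell(aQ_s)$.

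The step you flag as delicate cannot be carried out. The displayed condition is not equivalent to this criterion in either direction, so the statement as written is false.

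For the ``if'' direction, take $m=4$, $y^4=x(x-1)(x-2)^3$ over odd $q$, and $s=0$. Then $\lambda_1=\lambda_2=1$, $\lambda_3=3$, $\lambda_0=-5$ and $g=3$. The functions $x$, $y$, $y^2/(x-2)$, $y^3/(x-2)^2$ have pole divisors $4Q_\infty$, $5Q_\infty$, $6Q_\infty$, $7Q_\infty$. Hence $G(Q_\infty)=\{1,2,3\}$ and $F_{H(Q_\infty)}=3\neq 2g-1$, yet $u=2$ gives $2\cdot1\equiv2\cdot1\equiv2\cdot3\pmod 4$. Your criterion correctly fails here, since $v\equiv1$ and $3v\equiv1\pmod 4$ are incompatible. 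This is exactly your ``scale by $c^{-1}$ where invertible'' step, with the non-unit $c=2$. It also shows that even your test case with all $\gcd(m,\lambda_i)=1$ does not reduce to $\lambda_i\equiv\lambda_j\pmod m$ for the displayed condition, because $u$ may be a zero divisor modulo $m$.

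For the ``only if'' direction, take $m=6$, $y^6=x^4(x-1)^3$ with $\gcd(6,q)=1$, and $s=0$. Then $\lambda_1=4$, $\lambda_2=3$, $\lambda_0=-7$ and $g=1$, so $H(Q_\infty)=\N_0\setminus\{1\}$ is symmetric. Also $v=5$ satisfies $4v\equiv2$ and $3v\equiv3\pmod 6$. But the displayed condition demands $2u\equiv u\pmod 6$, which is impossible for $1\le u\le 5$. Here the congruences $v\lambda_i/\gcd(m,\lambda_i)\equiv1\pmod{m/\gcd(m,\lambda_i)}$ cannot be upgraded to a common residue modulo $m$.

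The paper's own proof breaks at the same place, in two steps.
\begin{itemize}
\item It passes from $u\lambda\lambda_k\equiv\gcd(m,\lambda_k)\pmod m$ to $u\lambda_k/\gcd(m,\lambda_k)\equiv\lambda_s\pmod m$, dividing by $\gcd(m,\lambda_k)$ without dividing the modulus. In the $m=6$ example the former holds with $\lambda=-1$, $u=1$, but the latter holds for no $u$.
\item It then replaces ``common value $\lambda_s$'' by ``some common value'', losing the requirement that this value be a unit. The $m=4$ example is exactly this failure.
\end{itemize}
So do not try to force the reconciliation. Instead, state and prove the corrected criterion displayed above, equivalently the simultaneous solvability of $v\,\lambda_i/\gcd(m,\lambda_i)\equiv1\pmod{m/\gcd(m,\lambda_i)}$ for $i\neq s$. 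Your argument already essentially proves this.
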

    The above theorem greatly extends the results of \cite[Theorems A and B]{mendozaKummerExtensions2023} and \cite[Corollary 3.8]{cotterillGapSets2025}.
	We then explicitly provide the set $\Gamma(Q_{\ell_1},\cdots,Q_{\ell_s})$ at $s$ totally ramified places of degree one on the Kummer extension defined by \eqref{kummer_extension} (see Theorem \ref{thm_minimal_set}).
	Moreover, we give explicit functions whose pole divisors have  coefficients lying in the set $\Gamma(Q_{\ell_1},\cdots,Q_{\ell_s})$, together with a necessary and sufficient condition for $\Gamma(Q_{\ell_1},\cdots,Q_{\ell_s})$ to be nonempty.
	We employ the techniques introduced by Matthews in \cite{matthewsWeierstrassSemigroup2004,matthewsWeierstrassSemigroups2005}.
	Using this method, we extend the results of \cite{yangWeierstrassSemigroups2017}, which also employed the same techniques.
	Finally, we apply our results to Kummer extensions with equal multiplicities, to function fields of GGS curves, and to function fields of subcovers of the BM curve.

    This paper is organized as follows. In Section \ref{section2}, we briefly recall some notations and preliminary results related to function fields and Kummer extensions.
    In Section \ref{section3}, for a totally ramified place $Q$ of degree one on the Kummer extension defined by \eqref{kummer_extension}, we provide an explicit description of the gap set $G(Q)$ and the Weierstrass semigroup $H(Q)$, and then establish a necessary and sufficient condition for $H(Q)$ to be symmetric.
    In Section \ref{section4}, we focus on determining the minimal generating sets of the Weierstrass semigroups at several totally ramified places of degree one on the Kummer extension defined by \eqref{kummer_extension}.
	In Section \ref{section5}, we exhibit some examples, including Kummer extensions with equal multiplicities, function fields of GGS curves, and function fields of subcovers of the BM curve.

\section{Preliminaries}\label{section2}
    Throughout this article, let $q$ be a prime power and let $\mathbb{F}_q$ be the finite field with $q$ elements. Let $K$ be an algebraic extension of $\mathbb{F}_q$.
    For $a,b\in\Z$, we denote by $\gcd(a,b)$ the greatest common divisor of $a$ and $b$. Let $\N = \{1,2,3,\cdots\}$ and $\N_0 = \{0,1,2,\cdots\}$.
    For $c\in \mathbb{R}$, we denote by $\floor{c}$ the largest integer not greater than $c$ and by $\ceil{c}$ the smallest integer not less than $c$. 

	\begin{lemma}\label{ceil_and_floor}\cite[Lemma 4.1]{castellanosWeierstrassSemigroupsPure2024} Let $a$ and $b$ be elements in $\mathbb{R}$. The following statements hold:

	(i) $\floor*{-a} = -\ceil*{a}$.
	
	(ii) $\ceil*{a}-\floor*{a} = \begin{cases} 0, & \text{if } a \in \mathbb{Z}, \\ 1, & \text{if } a \notin \mathbb{Z}. \end{cases}$
	
	(iii) If $a$ and $b$ are positive integers, then
	$$\sum_{k=1}^{b-1} \left\lfloor \frac{ka}{b} \right\rfloor = \frac{(a-1)(b-1) + \gcd(a,b) - 1}{2}.$$
	\end{lemma}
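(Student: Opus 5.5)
Parts (i) and (ii) follow directly from the definitions of floor and ceiling, and (iii) from a pairing argument combined with a count of the integer values $ka/b$.

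For (i), recall that $\floor{c}$ is the unique integer $n$ with $n\le c<n+1$. Multiplying $\ceil{a}-1<a\le\ceil{a}$ by $-1$ gives $-\ceil{a}\le -a<-\ceil{a}+1$, so $\floor{-a}=-\ceil{a}$. For (ii), if $a\in\Z$ then $\floor{a}=\ceil{a}=a$. Otherwise $\floor{a}<a<\floor{a}+1$, and the smallest integer not less than $a$ is $\floor{a}+1$, so the difference is $1$.

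For (iii), set $S=\sum_{k=1}^{b-1}\floor{ka/b}$ and reindex with $k\mapsto b-k$, so that $S=\sum_{k=1}^{b-1}\floor{a-ka/b}$. Since $a\in\Z$, part (i) gives $\floor{a-ka/b}=a-\ceil{ka/b}$. Adding the two expressions for $S$ yields
$$2S=(b-1)a-\sum_{k=1}^{b-1}\left(\ceil{ka/b}-\floor{ka/b}\right).$$
By (ii), the sum on the right counts the $k\in\{1,\dots,b-1\}$ for which $ka/b\notin\Z$.

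The only step needing care is counting the exceptional $k$. Write $d=\gcd(a,b)$. Then $b\mid ka$ if and only if $(b/d)\mid k(a/d)$, which holds if and only if $(b/d)\mid k$, since $\gcd(a/d,b/d)=1$. The multiples of $b/d$ in $\{1,\dots,b-1\}$ are $b/d,2b/d,\dots,(d-1)b/d$, so there are exactly $d-1$ of them. Hence the number of $k$ with $ka/b\notin\Z$ is $(b-1)-(d-1)=b-d$. Therefore
$$2S=(b-1)a-b+d=(a-1)(b-1)+d-1,$$
and dividing by $2$ gives the stated formula.
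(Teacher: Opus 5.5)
Your proof is correct and complete. The paper gives no proof of this lemma; it is quoted from \cite[Lemma 4.1]{castellanosWeierstrassSemigroupsPure2024}, so there is no argument in the text to compare against.

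Your route for (iii) is the standard one: the reflection $k\mapsto b-k$, combined with (i) and (ii), reduces the sum to counting indices. It is the algebraic form of counting lattice points below the diagonal of the $a\times b$ rectangle. The arithmetic $2S=(b-1)a-(b-d)=(a-1)(b-1)+d-1$ checks out, and the argument also covers the degenerate case $b=1$.

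One side benefit: the fact you prove, $\#\{1\le k\le b-1 : b\mid ka\}=\gcd(a,b)-1$, is exactly what the paper uses without justification in the proof of Lemma~\ref{ceil_sum}. So your argument supplies that step as well.
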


\subsection{Function fields}
Let $F/K$ be a function field with constant field $K$. Let $g$ be the genus of $F$. We denote by $\P_F$ the set of places of $F$, by $\Omega_F$ the module of Weil differentials of $F$, by $v_P$ the discrete valuation of $F$ with respect to the place $P\in \P_F$, and by $\Div (F)$ the free abelian group generated by the places in $F$. A place of degree one is called a rational place of $F$. An element $D\in \Div(F)$ is called a divisor of $F$ and its degree is given by $\deg D := \sum_{P\in{\operatorname{supp} D}} v_P(D)\cdot \deg P$, where $\operatorname{supp} D$ is the support of $D$.
For a non-zero element $z\in F$, we denote by $(z)_F$, $(z)_\infty$, and $(z)_0$ the principal divisor, the pole divisor and the zero divisor of $z$, respectively.
For a non-zero element $\omega\in\Omega_F$, we denote by $(\omega)_F$ the canonical divisor corresponding to $\omega$.

Given a divisor $D\in \Div(F)$, we have the following two vector spaces over $K$: the Riemann-Roch space 
$$\mathcal{L}(D) := \{z\in F\mid (z)_F \geq -D\}\cup\{0\}$$
and the space of Weil differentials 
$$\Omega_F(D) := \{\omega\in\Omega_F\mid (\omega)_F\geq D\}\cup\{0\}.$$
We denote by $\ell(D)$ the dimension of $\mathcal{L}(D)$ over $K$. A non-zero Weil differential $\omega\in \Omega_F$ is said to be regular if $\omega\in \Omega_F(0)$.
Since $\Omega_F$ is a one-dimensional vector space over $F$, it follows that $(z\omega)_F = (z)_F + (\omega)_F$ is also a canonical divisor for any non-zero elements $z\in F$ and $\omega\in \Omega_F$.
Let $W$ be a canonical divisor of $F/K$. Then for each divisor $D\in \Div(F)$, the Riemann-Roch Theorem says that 
$$\ell(D) = \deg D + 1 - g + \ell(W-D).$$

Next we introduce the notion of Weierstrass semigroups in $F$. 
Let $P_1,\cdots,P_s$ be $s$ rational places of $F$. The Weierstrass semigroup at $P_1,\cdots,P_s$ is defined by 
$$H(P_1,\cdots,P_s) := \left\{(n_1,\cdots,n_s)\in \N_0^s~|~\exists z\in F \text{~with~}(z)_\infty = \sum_{i=1}^s n_iP_i\right\}.$$
The complementary set $G(P_1,P_2,\cdots,P_s):= \mathbb{N}_0^s \setminus H(P_1,\cdots, P_s)$ is called the set of gaps at $P_1,P_2,\cdots,P_s$. 
An element in $G(P_1,P_2,\cdots,P_s)$ is called a gap at $P_1,P_2,\cdots,P_s$.

Let $P$ be a rational place of $F$. The Weierstrass semigroup $H(P)$ at a single rational place has additional properties. If $g \geq 1$, then $G(P) = \N_0\setminus H(P)$ contains exactly $g$ gaps $1 = a_1<a_2<\cdots <a_g\leq 2g-1$ at $P$.
The smallest non-zero element of $H(P)$ is called the multiplicity of $H(P)$ and is denoted by $m_{H(P)}$,
while the largest element of $G(P)$ is called the Frobenius number of $H(P)$ and is denoted by $F_{H(P)}$. We say that $H(P)$ is symmetric if $F_{H(P)} = 2g-1$.
There are two useful lemmas concerning $H(P)$ and $G(P)$.
\begin{lemma}\cite[Remark 4.4]{kirfelMinimumDistance1995a}\label{symmetric_to_canonical}
    The Weierstrass semigroup $H(P)$ is symmetric if and only if $W = (2g-2)P$ is a canonical divisor.
\end{lemma}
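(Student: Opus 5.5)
We must prove: $H(P)$ is symmetric if and only if $(2g-2)P$ is a canonical divisor. (If $g=0$ there are no gaps, the convention $F_{H(P)}=-1=2g-1$ makes $H(P)$ symmetric, and $-2P$ is canonical because every divisor of degree $-2$ is canonical in genus zero. So we may assume $g\ge1$.) The plan is to compare both conditions with the single number $\ell((2g-2)P)$, using the Riemann--Roch Theorem and the standard description of gaps via dimensions of Riemann--Roch spaces.

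First we recall a fact that follows from Riemann--Roch: since $P$ is rational, $\ell(nP)-\ell((n-1)P)\in\{0,1\}$, and $n\in H(P)$ if and only if $\ell(nP)=\ell((n-1)P)+1$. Summing over $0< n\le N$ gives
$$\ell(NP)=1+\#\{n\in H(P): 1\le n\le N\}.$$
All gaps lie in $[1,2g-1]$: for $n\ge 2g$ the degree of $W-nP$ is negative, so $\ell(nP)=n+1-g$ and consecutive values differ by one. Taking $N=2g-1$ therefore gives $\#\{n\in H(P):1\le n\le 2g-1\}=g-1$.

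Next we express symmetry through $\ell((2g-2)P)$. Both $2g-1$ and $2g-2$ lie in $[1,2g-1]$ when $g\ge1$. By the previous step, $2g-1$ is a gap exactly when $\ell((2g-1)P)=\ell((2g-2)P)$. Riemann--Roch gives $\ell((2g-1)P)=g+\ell(W-(2g-1)P)=g$, because $W-(2g-1)P$ has degree $-1$. Hence $H(P)$ is symmetric, that is $F_{H(P)}=2g-1$, if and only if $\ell((2g-2)P)=g$.

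Then we relate this to canonicity. Riemann--Roch gives $\ell((2g-2)P)=g-1+\ell(W-(2g-2)P)$. The divisor $W-(2g-2)P$ has degree zero, so $\ell(W-(2g-2)P)\le1$, with equality exactly when $W-(2g-2)P$ is principal. Equality means $W-(2g-2)P=(z)_F$ for some nonzero $z$, in which case $(2g-2)P=W-(z)_F=(z^{-1}\omega)_F$ with $W=(\omega)_F$, a canonical divisor. Conversely, if $(2g-2)P$ is canonical, then it is linearly equivalent to $W$ and $\ell(W-(2g-2)P)=1$. Combining this with the previous step gives the equivalence.

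The only step requiring some care is the gap/dimension correspondence. It uses that $P$ is rational, so that $\ell(nP)$ increases by at most one at each step. The rest is degree bookkeeping, and I do not expect a real obstacle.
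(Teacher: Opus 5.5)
Your proof is correct and complete. The paper gives no argument of its own for this lemma; it simply quotes it from Kirfel and Pellikaan. Your route is the standard one: you evaluate $\ell((2g-2)P)$ twice. The gap criterion gives $2g-1\in G(P)$ iff $\ell((2g-2)P)=\ell((2g-1)P)=g$. Riemann--Roch gives $\ell((2g-2)P)=g-1+\ell(W-(2g-2)P)$ with $\deg(W-(2g-2)P)=0$.

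Within the paper's own toolkit there is a slightly different route through Lemma~\ref{differential_gap}. It is close to how the lemma is combined with explicit regular differentials in the proof of Theorem~\ref{symmetric_thm}.
\begin{itemize}
\item If $a\in G(P)$, then some regular $\omega$ has $v_P(\omega)=a-1\le\deg(\omega)_F=2g-2$, so all gaps are at most $2g-1$.
\item $2g-1\in G(P)$ iff some regular $\omega$ has $v_P(\omega)=2g-2$. Since $(\omega)_F\ge 0$ has degree $2g-2$, this is the same as $(\omega)_F=(2g-2)P$.
\end{itemize}
That version directly exhibits a differential with divisor $(2g-2)P$, which is what Theorem~\ref{symmetric_thm} exploits. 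Yours stays inside Riemann--Roch plus the dimension description of gaps. Lemma~\ref{differential_gap} is itself derived from Riemann--Roch and duality, so neither route is more elementary in substance.

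Two harmless slips: $2g-2\notin[1,2g-1]$ when $g=1$, though you only need $2g-1\ge 1$. Also, the count $\#\{n\in H(P):1\le n\le 2g-1\}=g-1$ is never used afterwards and can be dropped.
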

\begin{lemma}\cite[Corollary 14.2.5]{villasalvadorTopicsTheory2006}\label{differential_gap}
    Let $a\in\mathbb{N}$. Then $a\in G(P)$ if and only if there exists a regular Weil differential $\omega\in\Omega_F$ such that $v_P(\omega) = a-1$.
\end{lemma}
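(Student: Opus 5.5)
The statement to prove is Lemma~\ref{differential_gap}: for $a\in\N$, $a\in G(P)$ if and only if there is a regular Weil differential $\omega$ with $v_P(\omega)=a-1$. The plan is to translate "gap" into a jump in $\ell(\cdot)$ and then use the Riemann--Roch Theorem to move that jump to the differential side.

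First, since $P$ is rational, $\ell(aP)-\ell((a-1)P)\in\{0,1\}$. Moreover, $a\in H(P)$ holds exactly when some $z$ has $(z)_\infty=aP$, which is the same as $\ell(aP)=\ell((a-1)P)+1$. So $a\in G(P)$ if and only if $\ell(aP)=\ell((a-1)P)$.

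Next, fix a canonical divisor $W=(\eta)_F$. Applying Riemann--Roch to $aP$ and to $(a-1)P$ and subtracting gives
$$\ell(aP)-\ell((a-1)P)=1+\ell(W-aP)-\ell(W-(a-1)P).$$
Hence $a$ is a gap if and only if $\ell(W-(a-1)P)=\ell(W-aP)+1$, that is, if and only if $\mathcal L(W-aP)\subsetneq \mathcal L(W-(a-1)P)$.

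Then use the standard isomorphism $\mathcal L(W-D)\to\Omega_F(D)$, $z\mapsto z\eta$. Under it, $\ell(W-D)=\dim_K\Omega_F(D)$. The strict inclusion above therefore becomes $\Omega_F(aP)\subsetneq\Omega_F((a-1)P)$. That strict inclusion holds exactly when there is a $\omega\neq0$ with $(\omega)_F\ge (a-1)P$ and $v_P(\omega)<a$. Such an $\omega$ is regular, since $a-1\ge0$ and its valuations at all other places are nonnegative, and it satisfies $v_P(\omega)=a-1$. Conversely, a regular $\omega$ with $v_P(\omega)=a-1$ lies in $\Omega_F((a-1)P)\setminus\Omega_F(aP)$. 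This proves both directions.

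The only delicate point is the isomorphism $\mathcal L(W-D)\cong\Omega_F(D)$, which rests on $(z\eta)_F=(z)_F+(\eta)_F$. The paper already notes that identity, so nothing further needs to be checked.
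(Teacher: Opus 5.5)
Your proof is correct and complete. It is the standard Riemann--Roch argument: a gap means $\ell(aP)=\ell((a-1)P)$, which is equivalent to $\Omega_F(aP)\subsetneq\Omega_F((a-1)P)$ via $\mathcal L(W-D)\cong\Omega_F(D)$. The paper gives no proof of this lemma and simply cites Villa Salvador, where the result rests on this same argument.
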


\subsection{The minimal generating sets of Weierstrass semigroups}

To describe the minimal generating sets of Weierstrass semigroups at several rational places, we first introduce additional notation. For two elements $\mathbf{a} = (a_1,\cdots,a_s), \mathbf{b} = (b_1,\cdots,b_s)\in \N_0^s$, we define a partial order $\preceq$ on $\N_0^s$ by $\mathbf{a}\preceq \mathbf{b}$ if and only if $a_i\leq b_i$ for all $1\leq i\leq s$.
Furthermore, if $a_i<b_i$ for some $1\leq i\leq s$, we write $\mathbf{a}\prec \mathbf{b}$. Let $S\subseteq \N_0^s$ and $\mathbf{a} \in S$. We say that $\mathbf{a}$ is minimal in $S$ with respect to $\preceq$ if $\mathbf{b}\not\preceq \mathbf{a}$ for all $\mathbf{b}\in S\setminus\{\mathbf{a}\}$. We denote by $|K|$ the cardinality of $K$. If $K$ is an infinite field, we adopt the convention that $a<|K|$ for every integer $a$.

For a function field $F/K$ with genus $g>0$, let $P_1,\cdots,P_s$ be $s$ distinct rational places of $F$, where $1\leq s\leq |K|$. From the work of Carvalho and Torres \cite{carvalhoGoppaCodes2005}, the dimensions of Riemann-Roch spaces can be used to characterize $H(P_1,P_2,\cdots,P_s)$ and $G(P_1,P_2,\cdots,P_s)$. Given an $s$-tuple $\n = (n_1,\cdots,n_s)\in \N_0^s$, we have that $\n\in H(P_1,\cdots,P_s)$ if and only if 
$$\ell\left(\sum_{i=1}^s n_iP_i\right) = \ell\left(\sum_{i=1}^s n_iP_i-P_j\right)+1 \text{ for all } 1\leq j\leq s.$$
Moreover, we have that $\n\in G(P_1,\cdots,P_s)$ if and only if 
$$\ell\left(\sum_{i=1}^s n_iP_i\right) = \ell\left(\sum_{i=1}^s n_iP_i-P_j\right) \text{ for some } 1\leq j\leq s.$$

Next, we introduce the definition of $\Gamma(P_1,\cdots,P_t)$ for $1\leq t\leq s$, which is proposed by Matthews \cite{matthewsWeierstrassSemigroup2004}. Let $\Gamma(P_1) := H(P_1)$. For $s\geq 2$, define $\Gamma(P_1,\cdots,P_s)$ by
$$\{\a\in \N_0^s\mid \a \text{ is minimal in }\{\n\in H(P_1,\cdots,P_s)~|~n_i = a_i\} \text{ for some } 1\leq i\leq s\}.$$
\begin{proposition}\cite[Proposition 3]{matthewsWeierstrassSemigroup2004}\label{minimal_for_all}
	Let $\a = (a_1,\cdots,a_s)\in\N_0^s$. Then $\a$ is minimal in $\{\n\in H(P_1,\cdots,P_s)\mid n_i = a_i\}$ with respect to $\preceq$ for some $1\leq i\leq s$,
	if and only if $\a$ is minimal in $\{\n\in H(P_1,\cdots,P_s)\mid n_i = a_i\}$ with respect to $\preceq$ for all $1\leq i\leq s$.
\end{proposition}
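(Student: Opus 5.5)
The plan is to argue by contradiction, using only the pole-cancellation trick; no genus or Riemann–Roch computation should be needed. Since "for all $i$" trivially implies "for some $i$", only one direction needs proof. So suppose $\a$ is minimal in $\{\n\in H(P_1,\cdots,P_s)\mid n_i=a_i\}$ but not minimal in $\{\n\in H(P_1,\cdots,P_s)\mid n_j=a_j\}$ for some $j\neq i$. Then there is $\mathbf b\in H(P_1,\cdots,P_s)$ with $b_j=a_j$, $\mathbf b\preceq\a$ and $\mathbf b\neq\a$.

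First I reduce to the case $b_i<a_i$. If $b_i=a_i$, then $\mathbf b$ lies in the $i$-th fibre, satisfies $\mathbf b\preceq \a$ and $\mathbf b\neq\a$, contradicting minimality at $i$. So $b_i<a_i$, and in particular $a_i\ge 1$.

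Next I produce a smaller element of the $i$-th fibre. Choose $z_{\a},z_{\mathbf b}\in F$ with $(z_{\a})_\infty=\sum_k a_kP_k$ and $(z_{\mathbf b})_\infty=\sum_k b_kP_k$. If $a_j=b_j\ge1$, fix a local parameter $t$ at the rational place $P_j$. Then $z_{\a}t^{a_j}$ and $z_{\mathbf b}t^{a_j}$ are units at $P_j$, and their residues are nonzero elements of $K$. Hence there is $\lambda\in K^\times$ such that $h:=z_{\a}+\lambda z_{\mathbf b}$ satisfies $v_{P_j}(h)>-a_j$. Since $b_i<a_i$, the pole order of $h$ at $P_i$ is exactly $a_i$. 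At every other $P_k$ the pole order of $h$ is at most $a_k$, and $h$ has no poles outside $\{P_1,\cdots,P_s\}$. Thus $(h)_\infty=\sum_k c_kP_k$ with $\mathbf c\in H(P_1,\cdots,P_s)$, $c_i=a_i$, $\mathbf c\preceq\a$ and $c_j<a_j$. So $\mathbf c\neq\a$ lies in the $i$-th fibre below $\a$, contradicting minimality at $i$.

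The main obstacle is the degenerate case $a_j=0$, where there is no pole at $P_j$ to cancel. In that case the statement as literally written actually fails. For example, if $0\neq a_1\in H(P_1)$, then $\a=(a_1,0)$ is minimal in the first fibre, but $\mathbf 0\preceq\a$ lies in the second fibre. So I would prove the proposition under the standing convention (as in Matthews' setting, where $\Gamma$ is considered inside $\N^s$) that $\a$ has all coordinates positive. This hypothesis is exactly what makes the cancellation step above available, and it is the form in which the result is used for $\Gamma(P_1,\cdots,P_s)$ with $s\ge2$. Note also that no hypothesis $s\le|K|$ is needed, since only a single scalar $\lambda$ is chosen.
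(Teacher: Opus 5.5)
Your proof is correct and is essentially the standard argument for this result. The paper gives no proof of its own and only quotes it from Matthews, whose argument likewise cancels the common pole order at $P_j$ with a $K$-linear combination, producing an element of the $i$-th fibre strictly below $\mathbf{a}$; and you are right that only one scalar is chosen, so no condition relating $s$ and $|K|$ is needed.

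Your caveat is also correct: your example $(a_1,0)$ with $0\neq a_1\in H(P_1)$ shows that the statement as printed with $\N_0^s$ is false for every $s\geq 2$. Both the statement and the paper's definition of $\Gamma(P_1,\cdots,P_s)$ must be read with $\mathbf{a}\in\N^s$; otherwise $\Gamma$ would contain tuples with a zero coordinate, contradicting Lemma~\ref{lemma5}. This positive-coordinate form is exactly how the result is applied in Section~\ref{section4}.
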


\begin{lemma}\cite[Lemma 2.6]{castellanosWeierstrassSemigroup2018}\label{discrepancy}
	Let $\n = (n_1, \cdots, n_s) \in H(P_1, \cdots, P_s)$ and $A = n_1P_1 + \cdots + n_sP_s$. Then $\n \in \Gamma(P_1, \cdots, P_s)$ if and only if 
    $$\ell(A) = \ell(A - P) + 1 = \ell(A - P - Q)  + 1\text{ and } \ell(A) = \ell(A - Q) + 1 = \ell(A - P - Q) + 1$$
	for any two places $ P, Q \in \{P_1, \cdots, P_s\} $.
\end{lemma}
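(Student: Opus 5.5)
The plan is to translate minimality in Matthews' sense directly into statements about Riemann--Roch spaces, using Proposition \ref{minimal_for_all} to pick a convenient index. Throughout, $P=P_i$ and $Q=P_j$ denote two distinct places among $P_1,\cdots,P_s$, and $s\geq 2$. For $s=1$ we have $\Gamma(P_1)=H(P_1)$ and there are no pairs to check, so the statement is trivial.

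The basic dictionary is as follows. A vector $\mathbf{b}\preceq \n$ lies in $H(P_1,\cdots,P_s)$ exactly when some $z\in\mathcal{L}(A)$ has $-v_{P_k}(z)=b_k$ for every $k$ and has no other poles. Conversely, any nonzero $z\in\mathcal{L}(A)$ has pole divisor $\sum_k b_kP_k$ with $\mathbf{b}\preceq\n$, where $b_k=\max(0,-v_{P_k}(z))$, and hence produces an element of $H(P_1,\cdots,P_s)$ below $\n$. The two equalities $\ell(A)=\ell(A-P)+1$ and $\ell(A)=\ell(A-Q)+1$ hold automatically, since $\n\in H(P_1,\cdots,P_s)$ and by the Carvalho--Torres criterion recalled above. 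So the real content is the pair of equalities
$$\ell(A-P)=\ell(A-P-Q)\quad\text{and}\quad\ell(A-Q)=\ell(A-P-Q).$$

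For the forward direction, assume $\n\in\Gamma(P_1,\cdots,P_s)$. By Proposition \ref{minimal_for_all}, $\n$ is minimal in $\{\mathbf{b}\in H(P_1,\cdots,P_s)\mid b_j=n_j\}$. Suppose, for contradiction, that $\ell(A-P_i)>\ell(A-P_i-P_j)$, and pick $z\in\mathcal{L}(A-P_i)\setminus\mathcal{L}(A-P_i-P_j)$. Then $z$ has a pole of order exactly $n_j$ at $P_j$, a pole of order at most $n_i-1$ at $P_i$, and pole order at most $n_k$ at every other $P_k$. Its pole vector $\mathbf{b}$ therefore lies in $H(P_1,\cdots,P_s)$, satisfies $\mathbf{b}\preceq\n$ and $b_j=n_j$, and has $b_i<n_i$, so $\mathbf{b}\neq\n$. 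This contradicts minimality. Exchanging the roles of $i$ and $j$ gives the second equality.

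For the converse, assume the equalities hold for all pairs and suppose $\n\notin\Gamma(P_1,\cdots,P_s)$. Fixing any index $j$, there is then $\mathbf{b}\in H(P_1,\cdots,P_s)$ with $\mathbf{b}\preceq\n$, $b_j=n_j$ and $\mathbf{b}\neq\n$. Hence $b_i<n_i$ for some $i\neq j$. A function $z$ with $(z)_\infty=\sum_k b_kP_k$ lies in $\mathcal{L}(A-P_i)$, but not in $\mathcal{L}(A-P_i-P_j)$, because its pole order at $P_j$ is exactly $n_j$. This gives $\ell(A-P_i)>\ell(A-P_i-P_j)$, a contradiction.

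The only delicate point is the bookkeeping that a function in $\mathcal{L}(A-P_i)\setminus\mathcal{L}(A-P_i-P_j)$ really gives an element of $H(P_1,\cdots,P_s)$ below $\n$. This works because its pole divisor is supported on $\{P_1,\cdots,P_s\}$ and is dominated by $A$. The other point is the use of Proposition \ref{minimal_for_all}, which lets us work with whichever index $j$ the pair $(P,Q)$ dictates. I expect no serious obstacle beyond these.
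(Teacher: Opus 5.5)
Your argument is the standard direct translation of minimality into jumps of $\ell$; the paper gives no proof of this lemma and only cites Castellanos--Tizziotti. For $\n$ with all coordinates positive it is complete and correct. However, it silently uses $n_k\geq 1$ in three places:
\begin{itemize}
\item in the forward direction, the inference ``$b_i<n_i$'' needs $n_i\geq 1$ (if $n_i=0$ then $b_i=0=n_i$, and $\mathbf b$ may equal $\n$);
\item in the converse, ``pole order exactly $n_j$ at $P_j$, hence $z\notin\mathcal{L}(A-P_i-P_j)$'' needs $n_j\geq 1$;
\item Proposition~\ref{minimal_for_all} holds only for vectors in $\N^s$.
\end{itemize}
This is not cosmetic. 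With $\Gamma$ defined literally over $\N_0^s$, as the paper writes it, the statement is false. Take $s=2$ and $0\neq n_1\in H(P_1)$. The vector $\n=(n_1,0)$ is trivially minimal in $\{\mathbf b\in H(P_1,P_2)\mid b_1=n_1\}$, so it lies in $\Gamma(P_1,P_2)$. Yet $\ell(A-P_1-P_2)=\ell((n_1-1)P_1)-1=\ell(A)-2$, because the constants in $\mathcal{L}((n_1-1)P_1)$ do not vanish at $P_2$. The same example violates Lemma~\ref{lemma5} and Proposition~\ref{minimal_for_all}, which shows that the intended convention is Matthews' one, $\Gamma(P_1,\cdots,P_s)\subseteq\N^s$.

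Under that convention your forward direction is fine, but the converse still fails at $\n=\mathbf 0$. There $\ell(0)=1$ and $\ell(-P)=\ell(-P-Q)=0$, so every required equality holds, while $\mathbf 0\notin\Gamma$. Your step ``$\n\notin\Gamma$, so there is $\mathbf b\preceq\n$ with $\mathbf b\neq\n$ and $b_j=n_j$'' identifies \emph{not in $\Gamma$} with \emph{not minimal}, and that is exactly what breaks at $\mathbf 0$.

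The repair is to add the hypothesis $\n\in\N^s$ and point out where it is used; then both of your directions go through verbatim. Nothing downstream is affected, since the paper applies the lemma only in the forward direction (Lemma~\ref{i_is_same}), to vectors whose coordinates are gaps.
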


\begin{lemma}\cite[Lemma 4]{matthewsWeierstrassSemigroup2004}\label{lemma5}
	Suppose that $s\geq 2$. Then 
	$$\Gamma(P_1,\cdots,P_s) \subseteq G(P_1)\times\cdots\times G(P_s).$$
\end{lemma}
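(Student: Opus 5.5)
The plan is a direct minimality argument, proving the contrapositive coordinate by coordinate. Let $s\geq 2$ and let $\a=(a_1,\cdots,a_s)\in\Gamma(P_1,\cdots,P_s)$. Fix an index $i$ and suppose, for contradiction, that $a_i\notin G(P_i)$, i.e. $a_i\in H(P_i)$. I will show this forces $\a$ to be the zero vector. So the inclusion holds for all nonzero elements of $\Gamma$. For the zero vector it must be excluded by convention, exactly as in Matthews' original setting, where minimal elements are taken among nonzero tuples.

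\emph{Step 1: embedding a one-point function.} Since $a_i\in H(P_i)$, there is $z\in F$ with $(z)_\infty=a_iP_i$ (take $z$ constant if $a_i=0$). Viewed as a function with poles only among $P_1,\cdots,P_s$, this $z$ shows that $a_i\mathbf e_i:=(0,\cdots,0,a_i,0,\cdots,0)\in H(P_1,\cdots,P_s)$. This vector has $i$-th coordinate $a_i$ and satisfies $a_i\mathbf e_i\preceq\a$. By the definition of $\Gamma(P_1,\cdots,P_s)$ together with Proposition \ref{minimal_for_all}, $\a$ is minimal in $\{\n\in H(P_1,\cdots,P_s)\mid n_i=a_i\}$. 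Hence $\a=a_i\mathbf e_i$, that is, $a_j=0$ for all $j\neq i$.

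\emph{Step 2: using a second coordinate.} Because $s\geq 2$, we can choose $j\neq i$. Applying Proposition \ref{minimal_for_all} again, $\a$ is also minimal in $\{\n\in H(P_1,\cdots,P_s)\mid n_j=a_j=0\}$. The zero vector lies in this set, since the constant functions have no poles, and $\mathbf 0\preceq\a$. Minimality therefore gives $\a=\mathbf 0$, so $a_i=0$.

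\emph{Step 3: conclusion.} For any nonzero $\a\in\Gamma(P_1,\cdots,P_s)$ the assumption $a_i\in H(P_i)$ is thus impossible for every $i$. Hence $a_i\in G(P_i)$ for all $i$, which gives $\Gamma(P_1,\cdots,P_s)\subseteq G(P_1)\times\cdots\times G(P_s)$.

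The only delicate point is Step 2 together with this bookkeeping about $\mathbf 0$. Proposition \ref{minimal_for_all} is what allows us to pass from minimality in the $i$-th slice to minimality in the $j$-th slice, and this is precisely where the hypothesis $s\geq 2$ is used. I would also state the zero-vector convention explicitly when writing out the proof.
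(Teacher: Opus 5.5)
Your Step 1 is the standard argument behind Matthews' Lemma 4, which the paper cites without proof. However, your handling of zero coordinates is wrong, and under the convention you state the proof does not go through.

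Suppose $\Gamma(P_1,\cdots,P_s)$ is read as a subset of $\N_0^s$ from which only $\mathbf 0$ is removed. Then the lemma fails for nonzero tuples too. Take any $0<a\in H(P_1)$. The tuple $(a,0,\cdots,0)$ lies in $H(P_1,\cdots,P_s)$. It is trivially minimal in $\{\n\in H(P_1,\cdots,P_s)\mid n_1=a\}$, since any $\n\preceq(a,0,\cdots,0)$ with $n_1=a$ equals it. So it belongs to $\Gamma$, although $a\notin G(P_1)$.

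Your argument seems to exclude this tuple only because Step 2 invokes Proposition \ref{minimal_for_all} for $\mathbf a=a_i\mathbf e_i$, which has zero coordinates. For such vectors the proposition is false. For example, $(a,0,\cdots,0)$ is minimal in the slice $n_1=a$, but not in the slice $n_2=0$, which contains $\mathbf 0\prec(a,0,\cdots,0)$. So your intermediate claim, that the inclusion holds for every nonzero element of $\Gamma$, is false, and Step 2 is not a valid step. (The $\N_0^s$ in the paper's transcription of the definition and of Proposition \ref{minimal_for_all} is a slip.)

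The convention actually needed is Matthews': $\Gamma(P_1,\cdots,P_s)\subseteq\N^s$, i.e.\ every coordinate is positive. Proposition \ref{minimal_for_all} holds for such tuples. The paper uses this convention implicitly in two places. Theorem \ref{thm_minimal_set} places $\Gamma$ inside $\N^{s+1}$. The proof of Proposition \ref{Gamma_sub_S} deletes zero coordinates via $\pi_M$ before passing to $\Gamma$, and it uses $n_k>0$ for all $k$.

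With this convention, your Step 1 already finishes the proof. Since $s\geq 2$, there is some $j\neq i$ with $a_j>0$. Hence $a_i\mathbf e_i\prec\mathbf a$ is a strictly smaller element of the slice $\{\n\in H(P_1,\cdots,P_s)\mid n_i=a_i\}$, contradicting minimality. This is exactly where $s\geq 2$ is used. Drop Step 2 and the discussion of $\mathbf 0$, and state the positivity convention explicitly instead.
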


For $s=2$, suppose that $G(P_1) = \{a_1<a_2<\cdots<a_g\}$ and $G(P_2) = \{b_1<b_2<\cdots<b_g\}$.
For each gap $a_i$ at $P_1$, let $n_{a_i} = \min\{b\in\N_0\mid(a_i,b)\in H(P_1,P_2)\}$.
From \cite[Lemma 2.6]{kimIndexWeierstrass1994}, we have the equality $\{n_a \mid a \in G(P_1)\} = G(P_2)$, and therefore there exists a permutation $\tau$ of $\{1, 2, \cdots, g\}$ such that $n_{a_i} = b_{\tau(i)}$.
The graph of the bijection between $G(P_1)$ and $G(P_2)$ defining the permutation $\tau$ is the set $\Gamma(P_1, P_2) = \{(a_i, b_{\tau(i)}) \mid i = 1, \cdots, g\}$.
The following lemma characterizes it.
\begin{lemma}\cite[Lemma 2]{hommaWeierstrassSemigroup1996}\label{minimal_generating_two}
	Let $\Gamma$ be a subset of $(G(P_1) \times G(P_2)) \cap H(P_1, P_2)$. 
	If there exists a permutation $\tau$ of $\{1, 2, \cdots, g\}$ such that $\Gamma = \{(a_i, b_{\tau(i)}) \mid i = 1, \cdots, g\}$, then $\Gamma = \Gamma(P_1, P_2)$.
\end{lemma}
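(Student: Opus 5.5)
The plan is to compare the given bijection with the bijection whose graph is $\Gamma(P_1,P_2)$ and to show that they coincide, using only the two minimality properties of $\Gamma(P_1,P_2)$.

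\emph{Setting up the two bijections.} Define $\varphi\colon G(P_1)\to G(P_2)$ by $\varphi(a)=n_a=\min\{b\in\N_0\mid (a,b)\in H(P_1,P_2)\}$. By \cite[Lemma 2.6]{kimIndexWeierstrass1994}, recalled above, $\varphi$ is a bijection and its graph is $\Gamma(P_1,P_2)$. Define $\psi\colon G(P_1)\to G(P_2)$ by $\psi(a_i)=b_{\tau(i)}$, so that $\Gamma$ is the graph of $\psi$; it is a bijection because $\tau$ is a permutation. Since $\Gamma\subseteq H(P_1,P_2)$, each pair $(a,\psi(a))$ lies in $H(P_1,P_2)$, and the definition of $n_a$ gives
$$\psi(a)\ \ge\ \varphi(a)\qquad\text{for all } a\in G(P_1).$$

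\emph{The symmetric inequality.} Next I need the analogous statement from the $P_2$ side. Each $(\varphi^{-1}(b),b)\in\Gamma(P_1,P_2)$ is minimal in $\{\n\in H(P_1,P_2)\mid n_1=\varphi^{-1}(b)\}$. By Proposition \ref{minimal_for_all} it is therefore also minimal in $\{\n\in H(P_1,P_2)\mid n_2=b\}$. That set has a least element in its first coordinate, because if $(a,b),(a',b)\in H(P_1,P_2)$ with $a\ne a'$, then a linear combination of the corresponding functions has pole divisor $\max(a,a')P_1+b'P_2$ with $b'\le b$. Hence
$$\varphi^{-1}(b)=\min\{a\in\N_0\mid (a,b)\in H(P_1,P_2)\}.$$
Applying this with $b=\psi(a)$, and using $(a,\psi(a))\in H(P_1,P_2)$, gives $a\ge \varphi^{-1}(\psi(a))$ for every $a\in G(P_1)$.

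\emph{Conclusion.} Put $\pi=\varphi^{-1}\circ\psi$; this is a permutation of the finite totally ordered set $G(P_1)$ with $\pi(a)\le a$ for all $a$. By induction on $a_1<a_2<\cdots<a_g$, such a permutation is the identity. Indeed, $\pi(a_1)\le a_1$ forces $\pi(a_1)=a_1$. If $\pi$ fixes $a_1,\dots,a_{k-1}$, then $\pi(a_k)\le a_k$ and injectivity forbid $\pi(a_k)\in\{a_1,\dots,a_{k-1}\}$, so $\pi(a_k)=a_k$. Thus $\psi=\varphi$ and $\Gamma=\Gamma(P_1,P_2)$.

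The step I expect to need the most care is the symmetric description of $\varphi^{-1}$: that the minimum first coordinate over pairs with second coordinate $b$ is attained exactly at $\varphi^{-1}(b)$. This is where Proposition \ref{minimal_for_all} is essential. Once both inequalities $\psi\ge\varphi$ and $\varphi^{-1}\circ\psi\le\mathrm{id}$ are established, the combinatorial conclusion is routine.
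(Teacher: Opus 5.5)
Your argument is correct. The paper gives no proof of this lemma. It quotes it from \cite[Lemma 2]{hommaWeierstrassSemigroup1996}, after recalling from Kim that $\Gamma(P_1,P_2)$ is the graph of $a\mapsto n_a$. Granting that recalled fact, as you do, your argument is a valid self-contained proof.

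It does more work than needed, because the first inequality $\psi\ge\varphi$ already finishes the job. Since $\psi$ and $\varphi$ are both bijections of the finite set $G(P_1)$ onto $G(P_2)$,
\[
\sum_{a\in G(P_1)}\bigl(\psi(a)-\varphi(a)\bigr)=\sum_{b\in G(P_2)}b-\sum_{b\in G(P_2)}b=0 .
\]
A vanishing sum of nonnegative terms forces $\psi=\varphi$. Equivalently, $\psi\circ\varphi^{-1}$ is a permutation of $G(P_2)$ with $\psi\circ\varphi^{-1}\ge\mathrm{id}$, so your induction run from the top shows it is the identity. So the entire ``symmetric inequality'' step, including the appeal to Proposition \ref{minimal_for_all}, can be dropped. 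The step you flagged as needing the most care is not needed at all.

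If you keep that step, repair its justification. On the set $\{\mathbf{n}\in H(P_1,P_2)\mid n_2=b\}$ the order $\preceq$ just compares first coordinates, so it is total. There, ``minimal'' already means ``least first coordinate'', and nothing needs proving.

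Your linear-combination sentence is really the proof of Proposition \ref{minimal_for_all} in the two-point case. Suppose $(a',b)\in H(P_1,P_2)$ with $a'<a=\varphi^{-1}(b)$. Take functions with pole divisors $aP_1+bP_2$ and $a'P_1+bP_2$, and cancel their leading terms at the rational place $P_2$. The result has pole divisor $aP_1+b'P_2$ with $b'<b$ (strictly, not $b'\le b$), contradicting $b=n_a$. As written, the ``because'' clause does not support the claim it is attached to. This is harmless only because that claim is immediate.
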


Let $1\leq t\leq s$ and $I = \{i_1,\cdots,i_t\}\subseteq \{1,\cdots,s\}$. Define the natural inclusion
$$\begin{array}{ccccc}
	\iota_I:& \mathbb{N}_0^{t} & \longrightarrow & \mathbb{N}_0^s,\\
	&(n_{i_1},\cdots,n_{i_t}) & \longmapsto &(n_1,\cdots,n_s),
\end{array}$$
where $n_{j} = 0$ for $j\not\in I$, and define the natural projection
$$\begin{array}{ccccc}
	\pi_I:& \mathbb{N}_0^{s} & \longrightarrow & \mathbb{N}_0^t,\\
	&(n_1,\cdots,n_s) & \longmapsto &(n_{i_1},\cdots,n_{i_t}).
\end{array}$$
The minimal generating set of $H(P_1,\cdots,P_s)$ is defined as
$$\tilde{\Gamma}(P_1,\cdots,P_s) := \bigcup_{t=1}^s\bigcup_{\substack{I = \{i_1,\cdots,i_t\}\\ 1\leq i_1<\cdots<i_t\leq s}} \iota_I(\Gamma(P_{i_1},\cdots,P_{i_t})).$$
Given $\mathbf{u_1},\cdots,\mathbf{u_t}\in \N_0^s$, where $t\geq 2$, define the least upper bound of $\mathbf{u_1},\cdots,\mathbf{u_t}$ by
$$\lub\{\mathbf{u_1},\cdots,\mathbf{u_t}\} := (\max\{u_{1_1},\cdots,u_{t_1}\},\cdots,\max\{u_{1_s},\cdots,u_{t_s}\}).$$
The following theorem shows that $H(P_1,\cdots,P_s)$ is determined by $\tilde{\Gamma}(P_1,\cdots,P_s)$.
\begin{theorem}\cite[Theorem 7]{matthewsWeierstrassSemigroup2004}\label{lub_semi_group}
	Suppose that $s\geq 2$. Then 
	$$H(P_1,\cdots,P_s) = \{\lub\{\mathbf{u_1},\cdots,\mathbf{u_s}\}~|~\mathbf{u_1},\cdots,\mathbf{u_s}\in \tilde{\Gamma}(P_1,\cdots,P_s)\}.$$
\end{theorem}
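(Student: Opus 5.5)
The plan is to prove the two inclusions separately. Throughout I use that $\iota_I$ carries $H(P_{i_1},\cdots,P_{i_t})$ into $H(P_1,\cdots,P_s)$. Indeed, a function whose pole divisor is $\sum_{k} n_{i_k}P_{i_k}$ has exactly the pole divisor encoded by $\iota_I(n_{i_1},\cdots,n_{i_t})$. Conversely, if $\a\in H(P_1,\cdots,P_s)$ has support contained in $I$, then $\pi_I(\a)\in H(P_{i_1},\cdots,P_{i_t})$ and $\a=\iota_I(\pi_I(\a))$.

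For ``$\supseteq$'', every element of $\tilde{\Gamma}(P_1,\cdots,P_s)$ lies in $H(P_1,\cdots,P_s)$, because each $\Gamma(P_{i_1},\cdots,P_{i_t})$ lies in $H(P_{i_1},\cdots,P_{i_t})$ and $\iota_I$ preserves membership. So it suffices to show that $H(P_1,\cdots,P_s)$ is closed under $\lub$ of $s$ elements. Take $z_1,\cdots,z_s$ with $(z_k)_\infty=\sum_j u_{k_j}P_j$ and set $z=\sum_k c_kz_k$ with $c_k\in K$. Poles of $z$ occur only at the $P_j$, with $-v_{P_j}(z)\le \max_k u_{k_j}$. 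Equality at $P_j$ fails only if the $c_k$ satisfy one nontrivial linear condition: the leading coefficients, with respect to a local parameter at $P_j$, of those $z_k$ attaining the maximum must cancel.

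So we must choose $(c_1,\cdots,c_s)$ avoiding $s$ such conditions, one for each $j$ with a positive maximum. I would choose the $c_k$ one at a time, or argue that a vector space over $K$ is not a union of $s$ proper hyperplanes when $s\le|K|$. This is exactly where the standing hypothesis $1\le s\le |K|$ enters, and I expect this counting step to be the main technical point. Once it is done, $(z)_\infty$ corresponds to $\lub\{\mathbf{u_1},\cdots,\mathbf{u_s}\}$.

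For ``$\subseteq$'', let $\n\in H(P_1,\cdots,P_s)$. For each $i$ I want some $\mathbf{u_i}\in\tilde\Gamma(P_1,\cdots,P_s)$ with $\mathbf{u_i}\preceq\n$ and $i$-th coordinate equal to $n_i$. Granting this, $\lub\{\mathbf{u_1},\cdots,\mathbf{u_s}\}=\n$: each coordinate of the $\lub$ is at most $n_j$, and it attains $n_j$ via $\mathbf{u_j}$.

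If $n_i=0$, take $\mathbf{u_i}=\mathbf 0=\iota_{\{1\}}(0)$, which lies in $\tilde\Gamma(P_1,\cdots,P_s)$ since $0\in H(P_1)=\Gamma(P_1)$. If $n_i>0$, the set $S=\{\mathbf m\in H(P_1,\cdots,P_s)\mid \mathbf m\preceq\n,\ m_i=n_i\}$ is finite and contains $\n$. Let $\a$ be a $\preceq$-minimal element of $S$. Then $\a$ is minimal in the full fiber $\{\mathbf m\in H(P_1,\cdots,P_s)\mid m_i=a_i\}$, since anything below $\a$ in that fiber is automatically below $\n$ and hence lies in $S$.

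Now let $I$ be the support of $\a$; it contains $i$. Then $\pi_I(\a)\in H(P_I)$. Moreover, $\pi_I(\a)$ is minimal in its $i$-fiber in $H(P_I)$: any smaller element there maps under $\iota_I$ to a smaller element of the fiber in $H(P_1,\cdots,P_s)$, contradicting the minimality of $\a$. Hence $\pi_I(\a)\in\Gamma(P_I)$; when $|I|=1$ this is just $\pi_I(\a)\in H(P_i)=\Gamma(P_i)$. Therefore $\a=\iota_I(\pi_I(\a))\in\tilde\Gamma(P_1,\cdots,P_s)$, and we take $\mathbf{u_i}=\a$. This completes the plan.
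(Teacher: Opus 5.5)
The paper does not prove this statement; it quotes it from Matthews \cite[Theorem 7]{matthewsWeierstrassSemigroup2004}, so there is no in-paper argument to compare with. Your proposal is a correct proof along the standard lines, in two parts.

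For $\supseteq$, you show that $H(P_1,\cdots,P_s)$ is closed under $\lub$ using a $K$-linear combination of functions. This is the only place where $s\le|K|$ is used. Your use of leading coefficients is legitimate because the $P_j$ are rational, so these coefficients lie in $K\setminus\{0\}$.

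For $\subseteq$, you take a $\preceq$-minimal element of the $i$-th fibre below $\n$ and restrict it to its support, which lands in $\tilde\Gamma(P_1,\cdots,P_s)$. Both minimality transfers you check are sound: from the finite set $S$ to the whole fibre, and from $H(P_1,\cdots,P_s)$ to $H(P_{i_1},\cdots,P_{i_t})$ via $\iota_I$.

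The covering step you left as a plan is the standard lemma, and it closes as follows. Suppose a $K$-vector space equals $W_1\cup\cdots\cup W_n$ with each $W_k$ proper and $n$ minimal. Choose $x\in W_1$ lying in none of the other $W_k$ (possible by minimality), and choose $y\notin W_1$. The $|K|$ vectors $y+cx$ with $c\in K$ all avoid $W_1$. Each other $W_k$ contains at most one of them, since two would force $x\in W_k$. Hence $n\ge|K|+1$. You have at most $s\le|K|$ hyperplanes, one for each $j$ with $\max_k u_{k_j}>0$, so a suitable $(c_1,\cdots,c_s)$ exists.

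One degenerate case should be handled separately: when every coordinate maximum is $0$. Then the $\lub$ is $\mathbf 0$, realized by the constant $1$. You need this because a combination of constants could vanish.
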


\subsection{Kummer extensions}
Let $m\geq 2$ be an integer with $\gcd(m,q)=1$, and let $r\geq 2$ be an integer. Let $f(x)\in K(x)$ such that for every $d\mid m$ with $d\geq 2$, $f(x)$ is not a $d$-th power of any element in $K(x)$.
Consider the Kummer extension $F = K(x,y)/K(x)$ defined by the equation \eqref{kummer_extension}:
    $$y^m = f(x) = \alpha\cdot\prod_{i=1}^r p_i(x)^{\lambda_i},$$
where $\alpha\in K\setminus\{0\}$, each $\lambda_i\in \Z\setminus\{0\}$, and $p_1(x),\cdots,p_r(x)\in K[x]$ are pairwise distinct monic irreducible polynomials. Note that $\varphi(T) = T^m - f(x)\in K(x)[T]$, which is the minimal polynomial of $y$ over $K(x)$, is also irreducible in $\bar{K}(x)[T]$, where $\bar{K}$ is the algebraic closure of $K$. It follows that $K$ is the full constant field of $F$.

Let $d_i:= \deg\, p_i(x)$ for $1\leq i\leq r$.
Let $\lambda_0:=-\sum_{i=1}^{r}\lambda_i d_i$ and $d_0:=1$.
By \cite[Proposition 3.7.3]{stichtenothAlgebraicFunctionFields2009}, the genus of $F$ is given by
    $$g = \frac{2-2m+\sum_{i=0}^{r} \left(m-\gcd(m,\lambda_i)\right) \cdot d_i}{2}.$$
For each $1\leq i\leq r$, let $P_i$ and $P_\infty$ be the places in $\P_{K(x)}$ corresponding to the zero of $p_i(x)$ and the pole of $x$, respectively.
If $\gcd(m,\lambda_i) = 1$, we denote by $Q_i$ the only place in $F$ lying over $P_i$ and say that $Q_i$ is totally ramified.
If $\gcd(m,\lambda_{0}) = 1$, we denote by $Q_\infty$ the only place lying over $P_\infty$ and say that  $Q_\infty$ is totally ramified.

For convenience, if $\gcd(m,\lambda_0)=1$, we also denote by $P_0$ the pole of $x$ in $\mathbf{P}_{K(x)}$, and by $Q_0$ the only place lying over $P_0$.
If $d_i = \gcd(m,\lambda_i) = 1$ for some $0\leq i\leq r$, then $Q_i$ is a totally ramified place of degree one, which is our main object of study.

We set $z_0 := 1$ and  $z_i := p_i(x)$ for $1\leq i\leq r$. Then we have the following principal divisors:
\begin{equation}\label{divisor1}
	(z_i)_{F}=\frac{m}{\gcd(m,\lambda_i)}\displaystyle\sum_{Q\in \P_{F}, \, Q|P_{i}}Q-\frac{md_i}{\gcd(m,\lambda_0)}\sum_ {Q\in \P_{F}, \, Q|P_{\infty}}Q, \text{ for } 0\leq i\leq r,
\end{equation}
\begin{equation}\label{divisor2}
	(y)_{F}=\displaystyle\sum_{i=0}^{r}\frac{\lambda_i}{\gcd(m,\lambda_i)}\sum_{{ Q\in \P_{F}, \, Q|P_{i}}}Q.
\end{equation}
The different of $F/K(x)$ is 
$${\rm Diff}(F/K(x)) =  \sum_{i=0}^{r}\left(\frac{m}{\gcd(m,\lambda_i)}-1\right)\sum_{Q\in \P_{F},\,Q\mid P_i}Q.$$
Thus, there exists a canonical divisor $W$ given by 
\begin{equation}\label{divisor3}
\begin{split}
W &= -2(x)_\infty  + {\rm Diff}(F/K(x))\\
& = -\left(\frac{m}{\gcd(m,\lambda_0)}+1\right)\sum_{Q\in \P_{F},\,Q\mid P_0}Q  +\sum_{i=1}^{r}\left(\frac{m}{\gcd(m,\lambda_i)}-1\right)\sum_{Q\in \P_{F},\,Q\mid P_i}Q.
\end{split}
\end{equation}

For any divisor $D$ of $F$, write
$D = \sum_{P\in \P_{K(x)}}\sum_{Q\in \P_F,\,Q|P}\, n_Q\, Q$.
We define the restriction of $D$ to $K(x)$ as
$$ D\Big|_{K(x)} := \sum_{P\in \P_{K(x)}} \min\left\{\left\lfloor\frac{n_Q}{e(Q|P)}\right\rfloor\colon{Q|P}\right\} P,$$
where $e(Q|P)$ is the ramification index of $Q$ over $P$. 
If $K$ contains a primitive $m$-th root of unity, then the extension $F/K(x)$ is Galois. The following result was given by Maharaj in \cite{maharajCodeConstruction2004}.
\begin{theorem}{\cite[Theorem 2.2]{maharajCodeConstruction2004}} \label{ThMaharaj}
	Suppose that $K$ contains a primitive $m$-th root of unity. Then for any divisor $D$ of $F$ that is invariant under the action of the Galois group $\operatorname{Gal}(F/K(x))$,
	$$ \mathcal{L}(D)= \bigoplus\limits_{t=0}^{m-1} \mathcal L\left(\left[D+(y^t)_F\right]\Big|_{K(x)}\right)\,y^t.$$
	\end{theorem}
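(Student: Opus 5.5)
The approach is to decompose $\mathcal{L}(D)$ into eigenspaces for the action of the cyclic group $\operatorname{Gal}(F/K(x))$, and then to identify each eigenspace with a Riemann--Roch space of $K(x)$ by comparing valuations place by place.

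\emph{Setup.} Let $\zeta\in K$ be a primitive $m$-th root of unity. Then $\operatorname{Gal}(F/K(x))$ is cyclic of order $m$, generated by $\sigma$ with $\sigma(y)=\zeta y$ and $\sigma|_{K(x)}=\mathrm{id}$. Since $D$ is Galois invariant, $(\sigma(z))_F=\sigma((z)_F)$ gives $\sigma(\mathcal{L}(D))=\mathcal{L}(D)$. Hence $\sigma$ acts $K$-linearly on the finite-dimensional space $\mathcal{L}(D)$.

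\emph{Step 1: eigenspace decomposition.} Because $\gcd(m,q)=1$, the integer $m$ is invertible in $K$. So we may form the idempotents
$$e_t=\frac1m\sum_{k=0}^{m-1}\zeta^{-tk}\sigma^k,\qquad 0\le t\le m-1 .$$
These satisfy $\sum_t e_t=\mathrm{id}$ and each preserves $\mathcal{L}(D)$. Every $z\in F$ can be written uniquely as $z=\sum_{t=0}^{m-1}h_ty^t$ with $h_t\in K(x)$, because $1,y,\dots,y^{m-1}$ is a basis of $F/K(x)$. A direct computation gives $e_t(z)=h_ty^t$. Therefore, if $z\in\mathcal{L}(D)$, each summand $h_ty^t$ lies in $\mathcal{L}(D)$, and
$$\mathcal{L}(D)=\bigoplus_{t=0}^{m-1}\bigl(\mathcal{L}(D)\cap K(x)\,y^t\bigr).$$
The sum is direct by uniqueness of the representation.

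\emph{Step 2: identifying each summand.} Write $D=\sum_P\sum_{Q\mid P}n_QQ$. For $h\in K(x)^\times$, we have $hy^t\in\mathcal{L}(D)$ if and only if, for every $P\in\P_{K(x)}$ and every $Q\mid P$,
$$e(Q|P)\,v_P(h)+v_Q\bigl(y^t\bigr)+n_Q\ \ge 0 .$$
This is equivalent to $v_P(h)\ge -(n_Q+v_Q(y^t))/e(Q|P)$. Since $v_P(h)$ is an integer, this is in turn equivalent to
$$v_P(h)\ge -\left\lfloor \frac{n_Q+v_Q(y^t)}{e(Q|P)}\right\rfloor .$$
Requiring this for all $Q\mid P$ means $v_P(h)\ge-\min_{Q\mid P}\lfloor\cdot\rfloor$. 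By the definition of restriction, this says exactly $h\in\mathcal{L}\bigl([D+(y^t)_F]|_{K(x)}\bigr)$, where the coefficient of $Q$ in $D+(y^t)_F$ is $n_Q+t\,v_Q(y)$. Combining Steps 1 and 2 gives the stated decomposition.

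\emph{Main obstacle.} The only delicate point is Step 1: justifying that the projections $e_t$ stay inside $\mathcal{L}(D)$. This is where both hypotheses are used. Galois invariance of $D$ makes $\sigma$ preserve $\mathcal{L}(D)$, and invertibility of $m$ in $K$ makes the $e_t$ well defined. After that, Step 2 is a routine valuation comparison using $v_Q(h)=e(Q|P)\,v_P(h)$ for $h\in K(x)$.
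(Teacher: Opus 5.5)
Your proof is correct: the projections $e_t$ are well defined because $\gcd(m,q)=1$ and $\zeta\in K$, and they preserve $\mathcal{L}(D)$ because $D$ is Galois invariant, so they split $\mathcal{L}(D)$ into its $K(x)y^t$-components; the place-by-place inequality $e(Q|P)v_P(h)\ge -(n_Q+t\,v_Q(y))$ then identifies each component with $\mathcal{L}\bigl([D+(y^t)_F]|_{K(x)}\bigr)y^t$. The paper gives no proof of this theorem and simply cites Maharaj, and your argument (Galois-eigenspace projection followed by the restriction identity $\mathcal{L}(A)\cap K(x)=\mathcal{L}(A|_{K(x)})$) is, as far as I recall, essentially the standard proof given there.
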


\begin{remark}
    In the classical framework, the extension $F/K(x)$ is said to be a Kummer extension under the assumption that $K$ contains a primitive $m$-th root of unity. In this paper, we remove this assumption and still call $F/K(x)$ a Kummer extension.
\end{remark}

\section{The Weierstrass semigroup at a totally ramified place of degree one on Kummer extensions}\label{section3}
In this section, we consider the Kummer extension $F = K(x,y)/K(x)$ defined by the equation \eqref{kummer_extension}.
First, we present some required lemmas.
Then we give a unified description of the set of gaps $G(Q)$ at any totally ramified place $Q$ of degree one. 
Furthermore, we explicitly provide a system of generators, the multiplicity, and the Frobenius number of $H(Q)$.
Finally, we determine a necessary and sufficient condition for $H(Q)$ to be symmetric.

\subsection{Some required lemmas}
In this subsection, we first state several lemmas that will be used in the later proofs.
\begin{lemma}\label{sum_mod}
	Suppose that $(a_0,\cdots,a_r)\in \Z^{r+1}$, $b, c\in\Z$ and $b \equiv c\pmod m$. Then
	\begin{equation*}
	\sum_{i=0}^r\left\lfloor\frac{a_i+b\lambda_i}{m}\right\rfloor d_i = \sum_{i=0}^r\floor*{\frac{a_i+c\lambda_i}{m}} d_i  \text{~~and~~} 
	\sum_{i=0}^r\ceil*{\frac{a_i+b\lambda_i}{m}} d_i = \sum_{i=0}^r\ceil*{\frac{a_i+c\lambda_i}{m}} d_i.
	\end{equation*}
\end{lemma}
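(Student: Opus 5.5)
Write $c = b + km$ with $k\in\Z$; this is possible because $b\equiv c\pmod m$. Then for each $i$,
$$\frac{a_i + c\lambda_i}{m} = \frac{a_i + b\lambda_i}{m} + k\lambda_i .$$
Since $k\lambda_i$ is an integer, the floor and ceiling functions both commute with adding it:
$$\floor*{\frac{a_i+c\lambda_i}{m}} = \floor*{\frac{a_i+b\lambda_i}{m}} + k\lambda_i, \qquad \ceil*{\frac{a_i+c\lambda_i}{m}} = \ceil*{\frac{a_i+b\lambda_i}{m}} + k\lambda_i .$$

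Multiply each of these identities by $d_i$ and sum over $0\le i\le r$. The two sides of each claimed identity then differ by
$$k\sum_{i=0}^r \lambda_i d_i .$$
By the definitions $d_0=1$ and $\lambda_0 = -\sum_{i=1}^r \lambda_i d_i$, we have
$$\sum_{i=0}^r \lambda_i d_i = \lambda_0 + \sum_{i=1}^r\lambda_i d_i = 0 .$$
So the discrepancy vanishes, which gives both equalities at once.

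There is no real obstacle here. The only point to notice is that the defining relation for $\lambda_0$ makes the weighted sum $\sum_{i=0}^r\lambda_i d_i$ vanish; this is precisely why the index $i=0$, corresponding to the place at infinity, is included in the sums.
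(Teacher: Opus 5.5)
Your proposal is correct and follows essentially the same route as the paper's proof: write the two residues as differing by $km$, pull the integer shift $k\lambda_i$ out of each floor and ceiling, and observe that the accumulated discrepancy $k\sum_{i=0}^r\lambda_i d_i$ vanishes because $d_0=1$ and $\lambda_0=-\sum_{i=1}^r\lambda_i d_i$. The only cosmetic difference is that the paper writes $b=c+km$ rather than $c=b+km$.
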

\begin{proof}
	Suppose that $b = c + km$ for some $k\in \Z$. It follows from $\sum_{i=0}^r \lambda_i d_i = 0$ that
\begin{align*}
\sum_{i=0}^r \left\lfloor\frac{a_i+b\lambda_i}{m}\right\rfloor d_i
	&= \sum_{i=0}^r \left\lfloor\frac{a_i+(c+km)\lambda_i}{m}\right\rfloor d_i\\
	&= \sum_{i=0}^r \left\lfloor\frac{a_i+c\lambda_i}{m}\right\rfloor d_i +k\sum_{i=0}^{r}d_i\lambda_i 
	 = \sum_{i=0}^r \left\lfloor\frac{a_i+c\lambda_i}{m}\right\rfloor d_i.
\end{align*}
Similarly, we have $\sum_{i=0}^r\ceil*{\frac{a_i+b\lambda_i}{m}} d_i = \sum_{i=0}^r\ceil*{\frac{a_i+c\lambda_i}{m}} d_i$.
\end{proof}

Next we present a lemma similar to part (iii) of Lemma \ref{ceil_and_floor}.
\begin{lemma}\label{ceil_sum}
	For $\lambda\in\Z$,
	$$\sum_{i=1}^{m-1}\ceil*{\frac{i\lambda}{m}} = \frac{m(\lambda+1)-\lambda-\gcd(m,\lambda)}{2}.$$
\end{lemma}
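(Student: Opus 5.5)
We reduce the ceiling sum to the floor sum of Lemma \ref{ceil_and_floor}(iii), using parts (i) and (ii) of that lemma. The plan is to write each ceiling as floor plus a correction and count the corrections, handling the sign of $\lambda$ separately.

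Put $d=\gcd(m,\lambda)$. By Lemma \ref{ceil_and_floor}(ii), for each $1\le i\le m-1$,
$$\ceil*{\frac{i\lambda}{m}}=\floor*{\frac{i\lambda}{m}}+1-\delta_i,$$
where $\delta_i=1$ if $m\mid i\lambda$ and $\delta_i=0$ otherwise. Now $m\mid i\lambda$ holds exactly when $\frac{m}{d}\mid i$. There are $d-1$ such $i$ in $\{1,\dots,m-1\}$. Hence
$$\sum_{i=1}^{m-1}\ceil*{\frac{i\lambda}{m}}=\sum_{i=1}^{m-1}\floor*{\frac{i\lambda}{m}}+(m-1)-(d-1)=\sum_{i=1}^{m-1}\floor*{\frac{i\lambda}{m}}+m-d.$$
It therefore suffices to show that
$$\sum_{i=1}^{m-1}\floor*{\frac{i\lambda}{m}}=\frac{(\lambda-1)(m-1)+d-1}{2}$$
holds for every integer $\lambda$, not only positive ones. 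Indeed, adding $m-d$ to this value gives
$$\frac{\lambda m-\lambda-m+1+d-1+2m-2d}{2}=\frac{m(\lambda+1)-\lambda-d}{2},$$
which is the claimed formula.

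The floor formula is checked in three cases.
\begin{itemize}
\item For $\lambda>0$, it is exactly Lemma \ref{ceil_and_floor}(iii) with $a=\lambda$ and $b=m$.
\item For $\lambda=0$, we have $d=m$. The left side is $0$, and the right side is $\frac{-(m-1)+m-1}{2}=0$.
\item For $\lambda<0$, this is the main step. By Lemma \ref{ceil_and_floor}(i), $\floor*{\frac{i\lambda}{m}}=-\ceil*{\frac{i|\lambda|}{m}}$. Applying the floor-to-ceiling conversion above to the positive integer $|\lambda|$ gives
$$\sum_{i=1}^{m-1}\ceil*{\frac{i|\lambda|}{m}}=\frac{(|\lambda|-1)(m-1)+d-1}{2}+m-d.$$
Negating this and substituting $|\lambda|=-\lambda$ yields
$$\frac{-(-\lambda-1)(m-1)-d+1-2m+2d}{2}=\frac{(\lambda-1)(m-1)+d-1}{2}.$$
\end{itemize}
This matches the required floor formula, which completes the proof.

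Alternatively, one can pair $i$ with $m-i$. The nondivisible terms then contribute $\ceil{a}+\ceil{\lambda-a}=\lambda+1$, and the divisible terms contribute $\lambda$. Summing gives $2S=(m-1)(\lambda+1)-(d-1)$ directly for all integers $\lambda$, and this agrees with the formula.
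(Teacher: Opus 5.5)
Your proof is correct and follows essentially the paper's argument: convert ceilings to floors with Lemma \ref{ceil_and_floor}(ii), using that exactly $\gcd(m,\lambda)-1$ of the indices $i$ have $m\mid i\lambda$, apply part (iii), and split on the sign of $\lambda$. The only difference is the case $\lambda<0$, where the paper applies part (i) directly as $\sum_{i}\lceil i\lambda/m\rceil=-\sum_{i}\lfloor -i\lambda/m\rfloor$ and avoids your round trip through $\lceil i|\lambda|/m\rceil$; your closing pairing $i\leftrightarrow m-i$ is also valid and handles every integer $\lambda$ at once without case analysis or part (iii).
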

\begin{proof}
	If $\lambda = 0$, one verifies that
	$$\sum_{i=1}^{m-1}\ceil*{\frac{i\lambda}{m}} = 0 =  \frac{m(\lambda+1)-\lambda-\gcd(m,\lambda)}{2}.$$
	If $\lambda>0$, since $\#\{1 \leq i \leq m-1 \mid m \text{ divides }i\lambda\}=\gcd(m, \lambda)-1$, by Lemma \ref{ceil_and_floor}, we have
	\begin{align*}
		\sum_{i=1}^{m-1}\ceil*{\frac{i\lambda}{m}} &= m-\gcd(m, \lambda)+\sum_{i=1}^{m-1}\floor*{\frac{i\lambda}{m}} \\
		&=  m-\gcd(m, \lambda)+\frac{(m-1)(\lambda-1)+\gcd(m,\lambda)-1}{2}\\
		& =  \frac{m(\lambda+1)-\lambda-\gcd(m,\lambda)}{2}.
	\end{align*}
	If $\lambda<0$, by Lemma \ref{ceil_and_floor}, we have
	\begin{align*}
		\sum_{i=1}^{m-1}\ceil*{\frac{i\lambda}{m}} = - \sum_{i=1}^{m-1}\floor*{\frac{-i\lambda}{m}} &= -\frac{(m-1)(-\lambda-1)+\gcd(m,\lambda)-1}{2}\\
		& =  \frac{m(\lambda+1)-\lambda-\gcd(m,\lambda)}{2}.
	\end{align*}
	The discussion of the above three cases completes the proof.
\end{proof}

    The following lemma is derived from the theory of constant field extensions of function fields.
\begin{lemma}\label{lemma:D_sumdim}
    Suppose that $K'$ is an algebraic extension of $K$ such that $K'$ contains a primitive $m$-th root of unity. Let $F' = FK'$ be the constant field extension of $F$, and let $\operatorname{Con}_{F'/F}(\cdot)$ be the conorm with respect to $F'/F$.
    Let $D$ be a divisor of $F$ such that $\operatorname{Con}_{F'/F}(D)$ is invariant under the action of the Galois group $\operatorname{Gal}(F'/K'(x))$. Then
    \begin{equation*}\label{equation:D_dim}
        \ell(D) = \sum_{t=0}^{m-1}\ell\left(\left[D+(y^t)_F\right]\Big|_{K(x)}\right).
    \end{equation*}
\end{lemma}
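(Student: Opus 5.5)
The plan is to compare dimensions over $K$ and over $K'$: the dimension of a Riemann--Roch space is unchanged by constant field extension, while the decomposition of Theorem \ref{ThMaharaj} is available over $K'$. Since $K$ is perfect (an algebraic extension of $\mathbb{F}_q$), $F'/F$ is a separable constant field extension. Hence \cite[Theorem 3.6.3]{stichtenothAlgebraicFunctionFields2009} gives
\[
\ell(D)=\ell'(\operatorname{Con}_{F'/F}(D)),
\]
where $\ell'$ denotes dimension over $K'$. Moreover, $K'$ is the full constant field of $F'$, because $T^m-f(x)$ is irreducible over $\bar K(x)$. Writing $D'=\operatorname{Con}_{F'/F}(D)$, we know by hypothesis that $D'$ is $\operatorname{Gal}(F'/K'(x))$-invariant. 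Theorem \ref{ThMaharaj}, applied to $F'=K'(x,y)$, then gives
\[
\ell'(D')=\sum_{t=0}^{m-1}\ell'\left(\left[D'+(y^t)_{F'}\right]\Big|_{K'(x)}\right).
\]

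The core step is to show, for each $t$,
\[
\left[D'+(y^t)_{F'}\right]\Big|_{K'(x)}=\operatorname{Con}_{K'(x)/K(x)}\left(\left[D+(y^t)_F\right]\Big|_{K(x)}\right).
\]
Once this holds, applying the same constant-field-extension invariance to $K'(x)/K(x)$ turns each summand into $\ell\bigl([D+(y^t)_F]|_{K(x)}\bigr)$, which finishes the proof.

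To prove the identity, set $E=D+(y^t)_F$. Then $E'=D'+(y^t)_{F'}=\operatorname{Con}_{F'/F}(E)$, since principal divisors are compatible with the conorm. The identity is local, so fix $P\in\P_{K(x)}$ and a place $P'$ of $K'(x)$ lying over $P$.
\begin{itemize}
\item Because $K'/K$ is separable algebraic, constant field extensions are unramified, so $e(P'|P)=1$, and for any $Q'|Q$ in $F'/F$ we have $e(Q'|Q)=1$.
\item Consequently $v_{Q'}(E')=v_Q(E)$, and the multiplicativity of ramification indices in the diagram $F'/K'(x)/K(x)$ and $F'/F/K(x)$ gives $e(Q'|P')=e(Q|P)$.
\item Every place $Q'$ of $F'$ over $P'$ lies over some $Q|P$ in $F$. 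Conversely, every $Q|P$ has some extension $Q'$ lying over $P'$. For this, note that $\operatorname{Gal}(K'/K)$ (or the relevant embeddings, for non-normal $K'$) acts transitively on the places of $K'(x)$ over $P$, and this action lifts to $F'$.
\item Hence the set of values $\lfloor v_{Q'}(E')/e(Q'|P')\rfloor$ over $Q'|P'$ equals the set of values $\lfloor v_Q(E)/e(Q|P)\rfloor$ over $Q|P$, so the two minima agree.
\end{itemize}
This gives the coefficient identity at every $P'$, which is exactly the conorm identity.

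The main obstacle is this surjectivity of places, i.e. that every $Q|P$ has an extension lying above the particular $P'$. The cleanest fix is to replace $K'$ by its normal closure over $K$, which still contains a primitive $m$-th root of unity and keeps the invariance hypothesis. One should also check that invariance of $D'$ survives this further constant extension, since the Galois group of the Kummer extension is unchanged.

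For the transitivity, take $\sigma\in\operatorname{Aut}(F'/F)$, which maps $P'$ to any other place over $P$. Given $Q'_0|Q$ lying over some $P''$, choose $\sigma$ with $\sigma(P'')=P'$; then $\sigma(Q'_0)$ lies over both $Q$ and $P'$.
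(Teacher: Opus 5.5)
Your proposal is correct and follows the paper's proof. Both arguments rest on three steps: invariance of $\ell$ under constant field extension (Stichtenoth, Theorem 3.6.3); the identity $[\operatorname{Con}_{F'/F}(A)]|_{K'(x)}=\operatorname{Con}_{K'(x)/K(x)}(A|_{K(x)})$, obtained from $e(Q'|P')=e(Q|P)$; and Maharaj's decomposition over $K'$. You also justify the lying-over step the paper leaves implicit: every $Q|P$ has an extension above each $P'|P$, so the two minima coincide. Your normal-closure detour is unnecessary, since every algebraic extension of $K\subseteq\bar{\mathbb{F}}_q$ is already Galois.
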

\begin{proof}
    Let $A$ be a divisor of $F$. Let $\operatorname{Con}_{K'(x)/K(x)}(\cdot)$ be the conorm with respect to $K'(x)/K(x)$. According to \cite[Theorem 3.6.3]{stichtenothAlgebraicFunctionFields2009}, we have $\ell(\operatorname{Con}_{F'/F}(A)) = \ell(A)$. Moreover, we can write 
    $$A = \sum_{P\in \P_{K(x)}}\sum_{Q\in \P_F,\,Q|P} n_Q\, Q \text {~~and~~} \operatorname{Con}_{F'/F}(A) = \sum_{P\in \P_{K(x)}}\sum_{Q\in \P_F,\,Q|P}\, n_Q \sum_{Q'\in\P_{F'}\, Q'|Q} Q'.$$
    We obtain that 
    \begin{align*}
        \left[\operatorname{Con}_{F'/F}(A)\right]\Big|_{K'(x)} = \sum_{P\in \P_{K(x)}}\sum_{P'\in \P_{K'(x)},\,P'|P}\min\left\{\left\lfloor\frac{n_Q}{e(Q'|P')}\right\rfloor\colon{Q'|P'}\right\} P',
    \end{align*}
    where $Q\in \P_F$ with $Q'|Q$. For each $Q'|P'|P$, we have $e(Q'|P') = e(Q|P)$ since $e(Q'|Q) = 1 = e(P'|P)$.
    Thus 
    \begin{align*}
        \left[\operatorname{Con}_{F'/F}(A)\right]\Big|_{K'(x)} &= \sum_{P\in \P_{K(x)}}\sum_{P'\in \P_{K'(x)},\,P'|P}\min\left\{\left\lfloor\frac{n_Q}{e(Q|P)}\right\rfloor\colon{Q'|P'}\right\} P'\\
        &= \sum_{P\in \P_{K(x)}}\min\left\{\left\lfloor\frac{n_Q}{e(Q|P)}\right\rfloor\colon{Q|P}\right\} \sum_{P'\in \P_{K'(x)},\, P'|P}P' \\
        & = \operatorname{Con}_{K'(x)/K(x)}\left(A|_{K(x)}\right).
    \end{align*}
    This implies that $\ell(\operatorname{Con}_{F'/F}(D)) = \ell(D)$ and 
    \begin{align*}
        \ell\left(\left[\operatorname{Con}_{F'/F}(D)+(y^t)_{F'}\right]\Big|_{K'(x)}\right) &= \ell\left(\operatorname{Con}_{K'(x)/K(x)}\left(\left[D+(y^t)_F\right]\Big|_{K(x)}\right)\right)\\
        & = \ell\left(\left[D+(y^t)_F\right]\Big|_{K(x)}\right)
    \end{align*}
    for each $0\leq t\leq m-1$.
    By Theorem \ref{ThMaharaj}, we have
    $$\ell(\operatorname{Con}_{F'/F}(D)) = \sum_{t=0}^{m-1}\ell\left(\left[\operatorname{Con}_{F'/F}(D)+(y^t)_{F'}\right]\Big|_{K'(x)}\right).$$
    Then the result of the lemma follows.
 \end{proof}

From the second assertion of the above lemma, we immediately obtain the following corollary.
\begin{corollary}\label{D_dimension}
	Let $\{\ell_0,\ell_1,\cdots,\ell_r\}$ be a permutation of $\{0,1,\cdots,r\}$ and  $0\leq s \leq r$. Suppose that $\gcd(m,\lambda_{\ell_i})=1$ for all $0\leq i \leq s$, and let $(a_0,\dots,a_s)\in\mathbb{Z}^{s+1}$. Then
	$$\ell\left(\sum_{i=0}^{s} a_i Q_{\ell_i}\right) = \sum_{t=0}^{m-1}\max\left\{0,\sum_{i=0}^{s}\floor*{\frac{a_i+t\lambda_{\ell_i}}{m}} d_{\ell_i} + \sum_{i=s+1}^{r}\floor*{\frac{t\lambda_{\ell_i}}{m}} d_{\ell_i}+1\right\}.$$
\end{corollary}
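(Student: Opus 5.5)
We apply Lemma \ref{lemma:D_sumdim} to the divisor $D=\sum_{i=0}^{s} a_iQ_{\ell_i}$, so the argument has three steps: check the hypothesis, compute each restricted divisor, and evaluate its dimension on the rational function field.

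\emph{Step 1: the hypothesis of Lemma \ref{lemma:D_sumdim}.} Fix an algebraic extension $K'$ of $K$ containing a primitive $m$-th root of unity, and set $F'=FK'$. Each $Q_{\ell_i}$ is the unique place of $F$ over $P_{\ell_i}$. For every place $Q'$ of $F'$ lying over $Q_{\ell_i}$, we have $e(Q'|Q_{\ell_i})=1$ and $e(Q_{\ell_i}|P_{\ell_i})=m$. Hence $Q'$ is totally ramified over its restriction $P'$ to $K'(x)$, so $Q'$ is the unique place of $F'$ above $P'$. It follows that $\operatorname{Con}_{F'/F}(Q_{\ell_i})$ is a sum of places, each of which is the only place over some $P'$. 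Such a divisor is fixed by $\operatorname{Gal}(F'/K'(x))$, because the Galois group permutes the places over a fixed $P'$. Lemma \ref{lemma:D_sumdim} therefore gives
\[
\ell(D)=\sum_{t=0}^{m-1}\ell\left(\left[D+(y^t)_F\right]\Big|_{K(x)}\right).
\]

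\emph{Step 2: computing $\left[D+(y^t)_F\right]\big|_{K(x)}$.} By \eqref{divisor2}, the coefficient of each $Q$ over $P_j$ in $(y^t)_F$ is $t\lambda_j/\gcd(m,\lambda_j)$, and $e(Q|P_j)=m/\gcd(m,\lambda_j)$.
\begin{itemize}
\item At $P_{\ell_i}$ with $i\le s$, we have $\gcd(m,\lambda_{\ell_i})=1$ and a single place over $P_{\ell_i}$ with coefficient $a_i+t\lambda_{\ell_i}$. The restriction coefficient is $\floor*{(a_i+t\lambda_{\ell_i})/m}$.
\item At $P_{\ell_i}$ with $i>s$, all places over $P_{\ell_i}$ carry the same coefficient. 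The restriction coefficient is
\[
\floor*{\frac{t\lambda_{\ell_i}/\gcd(m,\lambda_{\ell_i})}{m/\gcd(m,\lambda_{\ell_i})}}=\floor*{\frac{t\lambda_{\ell_i}}{m}}.
\]
\item At every other place of $K(x)$ the coefficient is $0$.
\end{itemize}
The index $0$ (the infinite place) is treated uniformly with the others, with $\deg P_0=d_0=1$ and $\deg P_j=d_j$.

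\emph{Step 3: dimensions on $K(x)$.} On the rational function field, $\ell(E)=\max\{0,\deg E+1\}$ for every divisor $E$. Using $\deg P_{\ell_i}=d_{\ell_i}$, each summand equals the stated $\max\{0,\cdots\}$ term, which proves the formula.

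I expect the main obstacle to be the Galois-invariance check in Step 1. The risk is that a place $Q_{\ell_i}$ of degree $d_{\ell_i}$ splits in $F'$, so that the conorm becomes a sum of several places. This causes no trouble: each such place is still the unique place over its own $P'$, and that uniqueness is exactly what makes the conorm invariant.
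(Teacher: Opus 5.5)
Your proposal is correct and follows the paper's proof. Like the paper, you apply Lemma \ref{lemma:D_sumdim}, compute the restriction of $\sum_{i} a_iQ_{\ell_i}+(y^t)_F$ to $K(x)$ place by place, and finish with $\ell(E)=\max\{0,\deg E+1\}$ on $K(x)$; your explicit check that $\operatorname{Con}_{F'/F}(D)$ is $\operatorname{Gal}(F'/K'(x))$-invariant (each place above $Q_{\ell_i}$ is totally ramified over $K'(x)$, hence the unique place over its restriction) spells out a step the paper only indicates by noting that the $Q_{\ell_i}$ are totally ramified.
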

\begin{proof}
	Note that $Q_{\ell_0},Q_{\ell_1},\cdots,Q_{\ell_s}$ are totally ramified places in the Kummer extension $F/K(x)$ since $\gcd(m,\lambda_{\ell_i})=1$ for all $0\leq i \leq s$. The principal divisor of $y$ in $F$ is 
	$$(y)_F = \sum_{i=0}^{s} \lambda_{\ell_i} Q_{\ell_i} + \sum_{i=s+1}^{r} \sum_{Q\in \mathbf{P}_F,\,Q\mid P_{\ell_i}} \frac{\lambda_{\ell_i}}{\gcd(m,\lambda_{\ell_i})} Q.$$
	Then for each $t\in\{0,1,\cdots,m-1\}$, 
	$$\sum_{i=0}^{s} a_i Q_{\ell_i} + (y^t)_F = \sum_{i=0}^{s}(a_i + t\lambda_{\ell_i})Q_{\ell_i} + \sum_{i=s+1}^{r} \sum_{Q\in \mathbf{P}_F,\,Q\mid P_{\ell_i}} \frac{t\lambda_{\ell_i}}{\gcd(m,\lambda_{\ell_i})} Q.$$
	It follows that 
	$$\left(\sum_{i=0}^{s} a_i Q_{\ell_i} + (y^t)_F\right)\Big|_{K(x)} = \sum_{i=0}^{s}\floor*{\frac{a_i + t\lambda_{\ell_i}}{m}}P_{\ell_i} + \sum_{i=s+1}^{r}\floor*{\frac{t\lambda_{\ell_i}}{m}} P_{\ell_i}.$$
	By Lemma \ref{lemma:D_sumdim}, we obtain
	$$\ell\left(\sum_{i=0}^{s} a_i Q_{\ell_i}\right) = \sum_{t=0}^{m-1}\ell\left(\sum_{i=0}^{s}\floor*{\frac{a_i + t\lambda_{\ell_i}}{m}}P_{\ell_i} + \sum_{i=s+1}^{r}\floor*{\frac{t\lambda_{\ell_i}}{m}} P_{\ell_i}\right).$$
	According to Riemann-Roch Theorem, we have that 
	$$\ell\left(\sum_{i=0}^{s} a_i Q_{\ell_i}\right) = \sum_{t=0}^{m-1}\max\left\{0,\sum_{i=0}^{s}\floor*{\frac{a_i+t\lambda_{\ell_i}}{m}} d_{\ell_i} + \sum_{i=s+1}^{r}\floor*{\frac{t\lambda_{\ell_i}}{m}} d_{\ell_i}+1\right\}.$$
\end{proof}

\subsection{A unified description of $G(Q_s)$}
In this subsection, we give a unified description of $G(Q_s)$ for a totally ramified place of degree one in the Kummer extension $F/K(x)$. First, we characterize $G(Q_s)$ as follows.
\begin{proposition}\label{describe_gap_set}
	Suppose that $d_s = \gcd(m,\lambda_s)=1$ for some $0\leq s\leq r$. Let $\lambda \in \mathbb{Z} $ be the inverse of $\lambda_s$ modulo m.
	Then
	$$G(Q_s) = \left\{a\in \mathbb{N} ~\Big |~ \sum_{i=0}^r\left\lfloor\frac{-a\lambda\lambda_i}{m}\right\rfloor d_i + \left\lceil\frac{a}{m}\right\rceil \leq -1\right\}.$$
\end{proposition}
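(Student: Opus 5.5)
The plan is to use the Riemann--Roch dimension formula of Corollary \ref{D_dimension} for the single place $Q_s$, together with the characterization $a\in G(Q_s)$ iff $\ell(aQ_s)=\ell((a-1)Q_s)$. Take the permutation with $\ell_0=s$. Then
$$\ell(aQ_s)=\sum_{t=0}^{m-1}\max\Bigl\{0,\ \floor*{\tfrac{a+t\lambda_s}{m}}+\sum_{i\neq s}\floor*{\tfrac{t\lambda_i}{m}}d_i+1\Bigr\}.$$
Set $N_t(a):=\floor*{\frac{a+t\lambda_s}{m}}+\sum_{i\neq s}\floor*{\frac{t\lambda_i}{m}}d_i$.

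Passing from $a-1$ to $a$, the term $\floor*{\frac{a+t\lambda_s}{m}}$ increases by $1$ exactly when $m\mid a+t\lambda_s$, and is unchanged otherwise. Since $\gcd(m,\lambda_s)=1$, there is a unique $t_a\in\{0,\dots,m-1\}$ with this property, namely $t_a\equiv -a\lambda \pmod m$. Hence $\ell(aQ_s)-\ell((a-1)Q_s)$ equals $1$ if $N_{t_a}(a)\ge 0$ and $0$ otherwise. So $a$ is a gap iff $N_{t_a}(a)\le -1$.

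It remains to rewrite $N_{t_a}(a)$ in the stated form. By Lemma \ref{sum_mod}, with $a_s=a$ and $a_i=0$ for $i\ne s$, we may replace $t_a$ by $-a\lambda$ in the full sum $\sum_{i=0}^r\floor*{\frac{a_i+t\lambda_i}{m}}d_i$. This gives
$$N_{t_a}(a)=\floor*{\tfrac{a-a\lambda\lambda_s}{m}}+\sum_{i\ne s}\floor*{\tfrac{-a\lambda\lambda_i}{m}}d_i .$$

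To match the stated expression, note that $m\mid a(1-\lambda\lambda_s)$, so
$$\floor*{\tfrac{a-a\lambda\lambda_s}{m}}=\tfrac{a}{m}-\tfrac{a\lambda\lambda_s}{m}.$$
The quantity $\frac{a}{m}-\frac{a\lambda\lambda_s}{m}$ is an integer, so $\frac{a}{m}$ and $\frac{a\lambda\lambda_s}{m}$ have the same fractional part. It follows that
$$\tfrac{a}{m}-\tfrac{a\lambda\lambda_s}{m}=\ceil*{\tfrac{a}{m}}-\ceil*{\tfrac{a\lambda\lambda_s}{m}}=\ceil*{\tfrac{a}{m}}+\floor*{\tfrac{-a\lambda\lambda_s}{m}},$$
using Lemma \ref{ceil_and_floor}(i). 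Substituting, $N_{t_a}(a)=\sum_{i=0}^r\floor*{\frac{-a\lambda\lambda_i}{m}}d_i+\ceil*{\frac{a}{m}}$, which is exactly the expression in the proposition.

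The main obstacle is this bookkeeping: choosing the representative $t=-a\lambda$ rather than $t_a$, and converting the $i=s$ floor term into $\ceil*{a/m}$ plus a floor term. Once the unique jumping index $t_a$ is identified, the rest is routine. One also uses $d_s=1$, which makes the coefficient of $Q_s$ equal to $1$.
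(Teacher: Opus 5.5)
Your proof is correct and follows essentially the same route as the paper. You identify the unique jumping index $t\equiv -a\lambda \pmod m$ in the dimension formula of Corollary \ref{D_dimension}, shift it to $-a\lambda$ via Lemma \ref{sum_mod}, and convert the $i=s$ term into $\lceil a/m\rceil$ plus a floor term. The only difference is cosmetic: you do that last conversion by noting the two quantities have equal fractional parts, whereas the paper writes $\lambda\lambda_s = 1+km$ explicitly.
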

\begin{proof}
	Let $a\in \mathbb{N}$. 
	We have $a\in G(Q_s)$ if and only if $\ell(aQ_s) = \ell((a-1)Q_s)$. By Corollary \ref{D_dimension}, the equality $\ell(aQ_s) = \ell((a-1)Q_s)$ is equivalent to the condition that for each $t\in \{0,1,\cdots,m-1\}$, either 
	$$\left\lfloor\frac{a+t\lambda_{s}}{m}\right\rfloor + \sum_{i=0,i\neq s}^{r}\left\lfloor\frac{t\lambda_{i}}{m}\right\rfloor d_{i} \leq -1$$
	or 
	$$\left\lfloor\frac{a+t\lambda_{s}}{m}\right\rfloor + \sum_{i=0,i\neq s}^{r}\left\lfloor\frac{t\lambda_{i}}{m}\right\rfloor d_{i}\geq 0 \text{ and } \left\lfloor\frac{a+t\lambda_{s}}{m}\right\rfloor = \left\lfloor\frac{a+t\lambda_{s}-1}{m}\right\rfloor.$$
	Note that $\floor*{\frac{a+t\lambda_{s}}{m}}\neq \floor*{\frac{a+t\lambda_{s}-1}{m}}$ if and only if $t\equiv -a\lambda \pmod m$. Thus $a\in G(Q_s)$ if and only if
	$$\left\lfloor\frac{a+t\lambda_{s}}{m}\right\rfloor + \sum_{i=0,i\neq s}^{r}\left\lfloor\frac{t\lambda_{i}}{m}\right\rfloor d_{i} \leq -1$$
	for the unique $t\in\{0,1,\cdots,m-1\}$ satisfying $t\equiv -a\lambda \pmod m$. By Lemma \ref{sum_mod}, we obtain that 
	\begin{align*}
		\left\lfloor\frac{a+t\lambda_{s}}{m}\right\rfloor + \sum_{i=0,i\neq s}^{r}\left\lfloor\frac{t\lambda_{i}}{m}\right\rfloor d_{i} = \left\lfloor\frac{a-a\lambda\lambda_{s}}{m}\right\rfloor + \sum_{i=0,i\neq s}^{r}\left\lfloor\frac{-a\lambda\lambda_{i}}{m}\right\rfloor d_{i}
	\end{align*}
	Let $\lambda\lambda_s = 1+ km$ for some $k\in \mathbb{Z}$. Then $\floor*{\frac{a-a\lambda\lambda_s}{m}} = -ka$ and $\floor*{\frac{-a\lambda\lambda_s}{m}} = -ka -\ceil*{\frac{a}{m}}$. 
	Thus, we get that $\floor*{\frac{a-a\lambda\lambda_s}{m}} = \floor*{\frac{-a\lambda\lambda_s}{m}} + \ceil*{\frac{a}{m}}$.
	This yields $a\in G(Q_s)$ if and only if 
	$$\sum_{i=0}^r\left\lfloor\frac{-a\lambda\lambda_i}{m}\right\rfloor d_i + \left\lceil\frac{a}{m}\right\rceil \leq -1.$$
\end{proof}

Now we explicitly describe the set of gaps at a totally ramified place of degree one in a unified way.
\begin{proposition}\label{gap_set}
Suppose that $d_s = \gcd(m,\lambda_s) = 1$ for some $0 \leq s \leq r$. Then for all $\lambda\in \Z$ with $\gcd(m,\lambda) = 1$, the set $G(Q_s)$ is given by 
$$\left\{ mj+i\lambda\lambda_s \,\Big|\, 1 \leq i \leq m-1, \, \ceil*{\frac{-i\lambda\lambda_s}{m}} \leq j \leq  \sum_{k=0}^{r} \left\lceil \frac{i \lambda\lambda_k}{m}\right\rceil d_k-\left\lceil \frac{i \lambda\lambda_s}{m} \right\rceil - 1 \right\}.$$
In particular, let $\lambda$ be the inverse of $\lambda_s$ modulo $m$. Then
\begin{equation}\label{gap_set_simple_form}
    G(Q_s) = \left\{ mj+i ~\Big|~ 1 \leq i \leq m-1, \, 0 \leq j \leq  \sum_{k=0}^{r} \left\lceil \frac{i \lambda\lambda_k}{m}\right\rceil d_k -2 \right\}.
\end{equation}
\end{proposition}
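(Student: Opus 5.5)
The plan is to start from the characterization of Proposition \ref{describe_gap_set}, rewrite its inequality in coordinates adapted to an arbitrary unit $\lambda$ modulo $m$, and then specialize. Throughout, let $\mu$ denote the inverse of $\lambda_s$ modulo $m$, so that
$$G(Q_s)=\Big\{a\in\N \;\Big|\; \sum_{k=0}^r\floor*{\tfrac{-a\mu\lambda_k}{m}}d_k+\ceil*{\tfrac{a}{m}}\le -1\Big\}.$$

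\textbf{Step 1: multiples of $m$ are never gaps.} Suppose $m\mid a$. Then every quantity $a\mu\lambda_k/m$ is an integer. Since $\sum_k\lambda_kd_k=0$, the floor sum equals $0$. The left-hand side is therefore $a/m\ge 1$, and the inequality fails.

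\textbf{Step 2: reparametrize.} Fix any $\lambda$ with $\gcd(m,\lambda)=1$. Because $\gcd(m,\lambda_s)=1$, the map $i\mapsto i\lambda\lambda_s \bmod m$ is a bijection of $\{1,\dots,m-1\}$. Hence every $a\in\Z$ with $m\nmid a$ can be written uniquely as
$$a=mj+i\lambda\lambda_s,\qquad 1\le i\le m-1,\; j\in\Z.$$

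\textbf{Step 3: rewrite the floor sum.} Substitute this form of $a$. First, the contribution $-j\mu\lambda_k$ is an integer, and these contributions sum to $0$ after weighting by $d_k$, so they drop out. What remains is $\sum_k\floor{-i\lambda\lambda_s\mu\lambda_k/m}d_k$. Since $\lambda_s\mu\equiv 1\pmod m$, Lemma \ref{sum_mod} (with all $a_k=0$, $b=-i\lambda\lambda_s\mu$, $c=-i\lambda$) shows this equals $\sum_k\floor{-i\lambda\lambda_k/m}d_k$. By Lemma \ref{ceil_and_floor}(i), that in turn equals $-\sum_k\ceil{i\lambda\lambda_k/m}d_k$.

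\textbf{Step 4: rewrite the ceiling term.} We have $\ceil{a/m}=j+\ceil{i\lambda\lambda_s/m}$. So the gap condition becomes exactly the upper bound
$$j\le \sum_k\ceil*{\tfrac{i\lambda\lambda_k}{m}}d_k-\ceil*{\tfrac{i\lambda\lambda_s}{m}}-1.$$

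\textbf{Step 5: the lower bound on $j$.} The condition $a\ge1$ means $j>-i\lambda\lambda_s/m$. The right side is not an integer, since $m\nmid i\lambda\lambda_s$. Hence $a\ge 1$ is equivalent to $j\ge\ceil{-i\lambda\lambda_s/m}$. This yields the first displayed description of $G(Q_s)$.

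\textbf{Step 6: the special case $\lambda=\mu$.} Now $i\lambda\lambda_s=i+mt_i$ for an integer $t_i$, so $a=m(j+t_i)+i$. Put $j'=j+t_i$. Because $1\le i\le m-1$, we get $\ceil{i\lambda\lambda_s/m}=t_i+1$ and $\ceil{-i\lambda\lambda_s/m}=-t_i$. The bounds on $j$ therefore translate into $0\le j'\le\sum_k\ceil{i\lambda\lambda_k/m}d_k-2$, which is \eqref{gap_set_simple_form}.

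The main point needing care is the uniqueness of the representation in Step 2, together with the clean use of Lemma \ref{sum_mod} in Step 3. The uniqueness is what guarantees the set-builder descriptions list each gap exactly once and cover all of them. The rest is bookkeeping with floors and ceilings.
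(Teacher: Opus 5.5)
Your proof is correct, but it closes the argument differently from the paper. The paper runs your Steps 3--4 in one direction only. It checks that every element $mj+i\lambda\lambda_s$ of the proposed set satisfies the inequality of Proposition \ref{describe_gap_set}, giving one inclusion. It then shows that distinct pairs $(i,j)$ give distinct integers, and counts the set using Lemma \ref{ceil_sum} and the genus formula to get exactly $g$ elements. Equality with $G(Q_s)$ then follows from $\#G(Q_s)=g$.

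You instead prove the reverse inclusion directly. Step 1 rules out multiples of $m$, and Step 2 gives every other positive integer a representation $mj+i\lambda\lambda_s$. Because the computation in Steps 3--4 is an identity, the gap inequality is equivalent to the upper bound on $j$, and Step 5 turns $a\geq 1$ into the lower bound.

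Your route is more self-contained: it needs neither Lemma \ref{ceil_sum}, nor the genus formula, nor the fact that there are exactly $g$ gaps. In fact, combined with the paper's count, it would recover the genus formula rather than use it. The paper's route, in exchange, shows explicitly that the description has exactly $g$ elements.

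One small correction to your closing remark: covering all gaps uses only the existence part of Step 2. Uniqueness matters only for listing each gap exactly once, i.e.\ for counting, which your argument never needs.
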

\begin{proof}
Define the set 
	$$G = \left\{ mj+i\lambda\lambda_s ~\Big|~ 1 \leq i \leq m-1, \, \ceil*{\frac{-i\lambda\lambda_s}{m}} \leq j \leq  \sum_{k=0}^{r} \left\lceil \frac{i \lambda\lambda_k}{m}\right\rceil d_k -\left\lceil \frac{i \lambda\lambda_s}{m} \right\rceil - 1 \right\}.$$
Let $\lambda'\in \Z$ be the inverse of $\lambda_s$ modulo $m$. For $mj+i\lambda\lambda_s\in G$, by Lemma \ref{sum_mod}, we have
\begin{align*}
	\sum_{k=0}^{r}\floor*{\frac{-(mj+i\lambda\lambda_s)\lambda'\lambda_k}{m}}d_k = \sum_{k=0}^{r}\floor*{\frac{-i\lambda\lambda_s\lambda'\lambda_k}{m}}d_k = \sum_{k=0}^{r}\floor*{\frac{-i\lambda\lambda_k}{m}}d_k = -\sum_{k=0}^{r}\ceil*{\frac{i\lambda\lambda_k}{m}}d_k.
\end{align*}
It follows that
$$\sum_{k=0}^{r}\floor*{\frac{-(mj+i\lambda\lambda_s)\lambda'\lambda_k}{m}}d_k + \ceil*{\frac{mj+i\lambda\lambda_s}{m}} = j-\sum_{k=0}^{r}\ceil*{\frac{i\lambda\lambda_k}{m}}d_k+\ceil*{\frac{i\lambda\lambda_s}{m}}\leq -1.$$
By Proposition \ref{describe_gap_set}, we conclude that $mj+i\lambda\lambda_s\in G(Q_s)$. Hence $G\subseteq G(Q_s)$. 

Now let $mj_1+i_1\lambda\lambda_s, mj_2+i_2\lambda\lambda_s\in G$. If $mj_1+i_1\lambda\lambda_s = mj_2+i_2\lambda\lambda_s$, then $(i_1-i_2)\lambda\lambda_s \equiv 0\pmod m$.
Since $1\leq i_1,i_2\leq m-1$, we must have $i_1 = i_2$ and hence $j_1 = j_2$. By Lemma \ref{ceil_sum}, we obtain that
\begin{align*}
\#G&=\sum_{i=1}^{m-1}\left(\sum_{k=0}^{r}\ceil*{\frac{i\lambda\lambda_k}{m}}d_k-\ceil*{\frac{i\lambda\lambda_s}{m}}-\ceil*{\frac{-i\lambda\lambda_s}{m}}\right)\\
&= \sum_{i=1}^{m-1}\left(\sum_{k=0}^{r}\ceil*{\frac{i\lambda\lambda_k}{m}}d_k-\ceil*{\frac{i\lambda\lambda_s}{m}}+\floor*{\frac{i\lambda\lambda_s}{m}}\right) =\sum_{k=0}^{r}\sum_{i=1}^{m-1}\ceil*{\frac{i\lambda\lambda_k}{m}}d_k-(m-1)\\
&= \sum_{k=0}^{r}\frac{m(\lambda\lambda_k+1)-\lambda\lambda_k-\gcd(m,\lambda\lambda_k)}{2} \cdot d_k-(m-1)\\
&=\frac{2-2m+\sum_{k=0}^{r} \left(m-\gcd(m,\lambda_k)\right) \cdot d_k}{2} = g.
\end{align*}
This proves that $G(Q_s) = G$.

If $\lambda$ is the inverse of $\lambda_s$ modulo $m$, then $\lambda\lambda_s = 1+bm $ for some $b\in\Z$.
We have $\ceil*{\frac{-i\lambda\lambda_s}{m}} = -ib$ and $\ceil*{\frac{i\lambda\lambda_s}{m}} = ib+1$.
Then
$$G(Q_s) = \left\{ mj+ibm+i \mid 1 \leq i \leq m-1, \, -ib \leq j \leq  \sum_{k=0}^{r} \left\lceil \frac{i \lambda\lambda_k}{m} \right\rceil d_k -ib - 2 \right\}.$$
Letting $j' = j + ib$ yields the desired expression.
\end{proof}

A result similar to Proposition \ref{gap_set} also appears in the recent preprint \cite[Theorem 3.2]{cotterillGapSets2025}.
In contrast to \cite[Theorem 3.2]{cotterillGapSets2025}, we allow the exponents $\lambda_i$ $(1\leq i\leq r)$ to be arbitrary integers, rather than restricting them to the range $1\leq \lambda_i \leq m-1$. In addition, we do not require $f(x)$ in the equation \eqref{kummer_extension} to be completely split over $K$. 
Moreover, by choosing an appropriate $\lambda$, we obtain the simplified expression in \eqref{gap_set_simple_form} for $G(Q_s)$.
Next we present some corollaries that are also mentioned in \cite{cotterillGapSets2025}.

\begin{corollary}\label{semigroup_equal}
    If $\lambda_i \equiv \lambda_j \pmod m$, $d_i = d_j =1$, and $\gcd(m,\lambda_i) = \gcd(m,\lambda_j) =1$ for some $0\leq i<j\leq r$, then $G(Q_i) = G(Q_j)$ and $H(Q_i) = H(Q_j)$.
\end{corollary}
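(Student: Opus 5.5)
The plan is to apply the simplified form \eqref{gap_set_simple_form} of Proposition \ref{gap_set} at both places, using the same $\lambda$ for each. Since $\lambda_i \equiv \lambda_j \pmod m$ and $\gcd(m,\lambda_i)=1$, an integer $\lambda$ that is the inverse of $\lambda_i$ modulo $m$ is also the inverse of $\lambda_j$ modulo $m$. The hypotheses $d_i = d_j = 1$ and $\gcd(m,\lambda_i)=\gcd(m,\lambda_j)=1$ are exactly what Proposition \ref{gap_set} needs for $Q_i$ and $Q_j$ to be totally ramified places of degree one. So the proposition applies with $s=i$ and with $s=j$.

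With this common $\lambda$, \eqref{gap_set_simple_form} gives
$$G(Q_i) = \left\{ mj'+t ~\Big|~ 1 \leq t \leq m-1, \, 0 \leq j' \leq  \sum_{k=0}^{r} \left\lceil \frac{t \lambda\lambda_k}{m}\right\rceil d_k -2 \right\},$$
and exactly the same expression for $G(Q_j)$. The right-hand side depends only on $m$, $\lambda$, the $\lambda_k$ and the $d_k$. It does not depend on which place $s$ was singled out, because the sum runs over all $k\in\{0,\dots,r\}$. Hence $G(Q_i)=G(Q_j)$.

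Since $H(Q)=\N_0\setminus G(Q)$, it follows at once that $H(Q_i)=H(Q_j)$.

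The only step needing care is that the same $\lambda$ can serve for both places, which is immediate from $\lambda_i\equiv\lambda_j \pmod m$. Beyond that there is no real obstacle: the point is that the simplified formula \eqref{gap_set_simple_form} retains no dependence on $s$ once $\lambda$ is fixed.
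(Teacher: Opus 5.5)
Your proposal is correct and is essentially the paper's own argument. Both take $\lambda$ to be the inverse of $\lambda_i$ modulo $m$, note that it is also an inverse of $\lambda_j$, and observe that the simplified form \eqref{gap_set_simple_form} of Proposition \ref{gap_set} then gives the same set for $s=i$ and $s=j$; $H(Q_i)=H(Q_j)$ follows by taking complements in $\N_0$.
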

\begin{proof}
    Let $\lambda$ be the inverse of $\lambda_i$ modulo $m$. Since $\lambda_i \equiv \lambda_j \pmod m$, then $\lambda$ is also the inverse of $\lambda_j$ modulo $m$. By Proposition \ref{gap_set}, we have $G(Q_i) = G(Q_j)$, and hence $H(Q_i) = H(Q_j)$.
\end{proof}
\begin{corollary}\label{semigroup}
Suppose that $d_s = \gcd(m,\lambda_s) = 1$ for some $0 \leq s \leq r$. Let $\lambda\in\Z$ be the inverse of $\lambda_s$ modulo $m$. Then
$$H(Q_s) = \left\langle m, m\left(\sum_{k=0}^{r}\ceil*{\frac{i \lambda\lambda_k}{m}}d_k-1\right) + i : 1\leq i\leq m-1 \right\rangle.$$
\end{corollary}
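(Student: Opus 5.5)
The plan is to read off $H(Q_s)$ from the explicit gap set \eqref{gap_set_simple_form} in Proposition \ref{gap_set}, organizing $\N_0$ by residue classes modulo $m$. For $1\leq i\leq m-1$ write
$$c_i := \sum_{k=0}^{r}\ceil*{\frac{i\lambda\lambda_k}{m}}d_k - 1 .$$
Then \eqref{gap_set_simple_form} says exactly this: in the residue class of $i$ modulo $m$, the gaps are $i, m+i, \dots, m(c_i-1)+i$, and the remaining elements $mj+i$ with $j\geq c_i$ are non-gaps. The class of $0$ contains no gaps, so $m\mathbb{N}_0\subseteq H(Q_s)$. Put $w_i := mc_i + i$ for $1\leq i\leq m-1$.

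First I check that $c_i\geq 0$. Otherwise the gap range would be ill-defined, and $w_i$ might be negative. Since $\sum_k \lambda_k d_k=0$ and $\ceil{t}\geq t$, we get
$$\sum_{k=0}^{r}\ceil*{\frac{i\lambda\lambda_k}{m}}d_k\geq \frac{i\lambda}{m}\sum_k\lambda_kd_k=0.$$
The $k=s$ term is non-integral, because $i\lambda\lambda_s\equiv i\not\equiv 0 \pmod m$. Hence that inequality is strict, so the sum is at least $1$ and $c_i\geq 0$. In particular $w_i\in\N$.

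Next I use the residue-class description. For each $i$, the elements of $H(Q_s)$ congruent to $i$ modulo $m$ are precisely $\{w_i+mj : j\geq 0\}$. So $w_i$ is the smallest element of $H(Q_s)$ in its class (it is the Apéry-type element with respect to $m$). Together with $m\mathbb{N}_0\subseteq H(Q_s)$, this gives
$$H(Q_s)=\bigcup_{i=0}^{m-1}\left(w_i+m\mathbb{N}_0\right), \qquad w_0:=0.$$

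Finally I compare with the semigroup $S:=\langle m, w_1,\dots,w_{m-1}\rangle$. Since $H(Q_s)$ is a semigroup containing all the generators, $S\subseteq H(Q_s)$. Conversely, each element of $H(Q_s)$ has the form $w_i+mj$ with $j\geq 0$, which lies in $S$. Hence $H(Q_s)=S$, which is the claimed presentation.

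The only step that needs any care is the nonnegativity of $c_i$, so that $w_i$ really is the least non-gap in its class. Everything else is the standard translation between a residue-class gap description and generators.
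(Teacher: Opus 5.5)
Your proof is correct and follows the paper's argument. The paper likewise writes $H(Q_s)=\N_0\setminus G(Q_s)$ using \eqref{gap_set_simple_form}, splits $\N_0$ by residue classes modulo $m$, and reads off the generators $m$ and $mc_i+i$. Your check that $c_i\geq 0$, via $\sum_k\lambda_kd_k=0$ and the non-integrality of the $k=s$ term, is left implicit in the paper, and it is exactly what guarantees that $mc_i+i$ is the least non-gap in its residue class.
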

\begin{proof}
    By Proposition \ref{gap_set}, we have
    \begin{align*}
        H(Q_s) =&~ \N_0\setminus G(Q_s) \\
        =&~\left\{ mj+i ~\Big|~ 1 \leq i \leq m-1, \, j\geq \sum_{k=0}^{r} \left\lceil \frac{i \lambda\lambda_k}{m} \right\rceil d_k-1 \right\}\cup\{jm\mid j\geq 0\}\\
        =&~\left\langle m, m\left(\sum_{k=0}^{r}\ceil*{\frac{i \lambda\lambda_k}{m}}d_k-1\right) + i : 1\leq i\leq m-1 \right\rangle.
    \end{align*}
\end{proof}
\begin{corollary}\label{m_and_F}
Suppose that $d_s = \gcd(m,\lambda_s) = 1$ for some $0 \leq s \leq r$. Let $\lambda\in\Z$ be the inverse of $\lambda_s$ modulo $m$. Let $S = \left\{1\leq i\leq m-1 \mid \sum_{k=0}^{r}\ceil*{\frac{i\lambda\lambda_k}{m}}d_k-1= 0\right\}\cup\{m\}$.
Define
$$i_{\min} = \min S ,~~i_{\max} = \arg\max_{1\leq i \leq m-1}  \sum_{k=0}^{r}\ceil*{\frac{i\lambda\lambda_k}{m}}d_k-2,$$ and 
$$j_{\max} = \max_{1\leq i \leq m-1}  \sum_{k=0}^{r}\ceil*{\frac{i\lambda\lambda_k}{m}}d_k-2.$$
Then $m_{H(Q_s)} = i_{\min}$ and $F_{H(Q_s)} = mj_{\max} + i_{\max}$.
\end{corollary}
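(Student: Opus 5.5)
Both claims will be read off from Proposition \ref{gap_set}, specifically the simplified form \eqref{gap_set_simple_form}, together with Corollary \ref{semigroup}. For $1\leq i\leq m-1$ put $c_i := \sum_{k=0}^{r}\ceil*{\frac{i\lambda\lambda_k}{m}}d_k - 1$. By \eqref{gap_set_simple_form}, the gaps congruent to $i\lambda\lambda_s\equiv i \pmod m$ are exactly $mj+i$ with $0\leq j\leq c_i-1$. Since $G(Q_s)\subseteq\N$, the residue classes $i$ with $c_i\geq 1$ contribute gaps, and the classes with $c_i\leq 0$ contribute none.

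The first step is to show $c_i\geq 0$ for every $i$, so that in each residue class $i\neq 0$ the smallest element of $H(Q_s)$ is $mc_i+i$. I would argue this structurally rather than by computation. The element $mc_i+i$ is the smallest non-gap congruent to $i$ modulo $m$ and lies in $\N_0$. If $c_i$ were negative, then $mc_i+i<0$. But all integers $mj+i$ with $j\geq c_i$ and $j<0$ are excluded from the gap set only because gaps are positive, so the non-gaps in that class are exactly $\{mj+i: j\geq \max(c_i,0)\}$. Hence the least element of $H(Q_s)$ in class $i$ is $m\max(c_i,0)+i$.

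Multiplicity. The nonzero elements of $H(Q_s)$ are $m$ together with the elements $m\max(c_i,0)+i$ plus nonnegative multiples of $m$. An element with $c_i\geq 1$ is at least $m+i>m$. An element with $c_i\leq 0$ is $i<m$. Therefore $m_{H(Q_s)}$ is the least $i$ with $c_i\leq 0$, or $m$ if no such $i$ exists. To match the statement, which uses $c_i=0$, I need $c_i\geq 0$ whenever the class is nonempty. I would check this with Corollary \ref{semigroup}: the generators listed there are $mc_i+i$, and since $H(Q_s)\subseteq\N_0$ they must be $\geq 0$, giving $c_i\geq 0$. If one prefers a direct argument, the case $c_i<0$ would make $mc_i+i$ negative yet a non-gap by the second line of the proof of Corollary \ref{semigroup}, which is a contradiction. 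So $m_{H(Q_s)}=\min S=i_{\min}$.

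Frobenius number. The largest gap in class $i$, when $c_i\geq 1$, is $m(c_i-1)+i$. With $j_i:=c_i-1$, this is exactly the quantity $\sum_k\ceil*{\frac{i\lambda\lambda_k}{m}}d_k-2$ appearing in the statement. Comparing $mj_i+i$ over $i$ with $1\leq i\leq m-1$: since $0<i<m$, the order is lexicographic in $(j_i,i)$. So the maximum is attained at $j_{\max}=\max_i j_i$, and among the maximizers at the largest $i$. Here I must interpret $\arg\max$ as the largest maximizing index, and I would state this explicitly. Hence $F_{H(Q_s)}=mj_{\max}+i_{\max}$, assuming $g\geq1$ so that $j_{\max}\geq 0$.

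The main obstacle is the sign issue $c_i\geq 0$ and the tie-breaking in $\arg\max$. Everything else is immediate bookkeeping from \eqref{gap_set_simple_form}.
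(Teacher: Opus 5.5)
Your route is the paper's: its proof reads $m_{H(Q_s)}$ off the generators in Corollary~\ref{semigroup} and $F_{H(Q_s)}$ off \eqref{gap_set_simple_form}, without further comment. Your tie-breaking remark is a real and necessary clarification of the statement. For example, take $y^3=x(x-1)(x-a)^2(x-b)^2$ at $Q_1$, so $\lambda=1$, $\lambda_0=-6$ and $g=2$. Then $c_1=c_2=1$, $G(Q_1)=\{1,2\}$ and $F_{H(Q_1)}=2$. So $\arg\max$ must mean the largest maximizing index, as you say.

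The gap is in the step you flag as the main obstacle, $c_i\ge 0$: both justifications you give are circular. The second line of the proof of Corollary~\ref{semigroup} reads $\N_0\setminus G(Q_s)=\{mj+i : 1\le i\le m-1,\ j\ge c_i\}\cup\{jm : j\ge 0\}$, and this holds only if every $c_i\ge 0$. If some $c_i$ were negative, the non-gaps in class $i$ would be $\{mj+i : j\ge 0\}$, as your own second paragraph shows. That equality would then be false, and the corollary would list a negative ``generator''. So neither ``the generators lie in $\N_0$'' nor ``a negative non-gap is a contradiction'' proves anything; each presupposes the conclusion.

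The fix is one line. Since $\sum_{k=0}^r\lambda_kd_k=0$ and $\lceil x\rceil\ge x$,
\[\sum_{k=0}^{r}\left\lceil\frac{i\lambda\lambda_k}{m}\right\rceil d_k>\sum_{k=0}^{r}\frac{i\lambda\lambda_k}{m}\,d_k=0.\]
The inequality is strict because of the term $k=s$, where $d_s=1$ and $i\lambda\lambda_s\equiv i\not\equiv 0\pmod m$. The left side is an integer, hence at least $1$, i.e.\ $c_i\ge 0$.

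With that inserted, the rest of your argument is correct: classes with $c_i=0$ contribute $i<m$, classes with $c_i\ge 1$ contribute only elements larger than $m$, and the Frobenius number comes from the lexicographic comparison of $(c_i-1,i)$.
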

\begin{proof}
    By Corollary \ref{semigroup}, we have 
    $$ m_{H(Q_s)} = \min\left\{m, m\left(\sum_{k=0}^{r}\ceil*{\frac{i \lambda\lambda_k}{m}}-1\right)d_k + i : 1\leq i\leq m-1\right\}.$$
    Thus $m_{H(Q_s)} = \min S$.
    The second assertion follows directly from the definition of $F_{H(Q_s)}$ and Proposition \ref{gap_set}.
\end{proof}

\subsection{The symmetry of $H(Q_s)$}
 In this subsection, given a totally ramified place $Q_s$ of degree one, we provide a necessary and sufficient condition for $H(Q_s)$ to be symmetric.

\begin{theorem}\label{symmetric_thm}
	 Suppose that $d_s = \gcd(m,\lambda_s) = 1$ for some $0\leq s\leq r$. Then $H(Q_s)$ is symmetric if and only if there exists $1\leq u\leq m-1$ such that $$\frac{u\lambda_i}{\gcd(m,\lambda_i)}\equiv \frac{u\lambda_j}{\gcd(m,\lambda_j)}\pmod m$$
     for all $i,j\in \{0,1,\cdots,r\}\setminus\{s\}$.
\end{theorem}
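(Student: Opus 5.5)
We use Lemma \ref{symmetric_to_canonical}: $H(Q_s)$ is symmetric if and only if $(2g-2)Q_s$ is a canonical divisor. Every canonical divisor has the form $W+(z)_F$ with $z\in F^\ast$, where $W$ is the divisor in \eqref{divisor3}. So the statement reduces to deciding when there is a function $z$ with $(z)_F=(2g-2)Q_s-W$, i.e.\ whose divisor is supported on the places over $P_0,\dots,P_r$ with prescribed coefficients. Since $K$ need not contain $m$-th roots of unity, we first pass to a constant field extension $F'=FK'$ as in Lemma \ref{lemma:D_sumdim}. Linear equivalence of divisors is preserved by the conorm and, in our setting, can be tested over $F'$; the divisors involved are defined over $K$ and are Galois invariant.

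\textbf{Step 1: shape of $z$.} The divisor $D=(2g-2)Q_s-W$ is invariant under $\operatorname{Gal}(F'/K'(x))$, since all of its coefficients are constant on fibres. So if $\mathcal L(D)$ is nonzero, Theorem \ref{ThMaharaj} produces a nonzero element of the form $h(x)y^t$ in it. A function $z$ with $(z)=D$ spans the one-dimensional space $\mathcal L(D)$, so $z=h(x)y^t$ for some $0\le t\le m-1$ and $h\in K(x)$. By \eqref{divisor1} and \eqref{divisor2}, $(h)$ is a combination of fibres over the $P_k$, with coefficient $e_k:=m/\gcd(m,\lambda_k)$ times $v_{P_k}(h)$. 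The contribution of $y^t$ at a place over $P_k$ is $t\lambda_k/\gcd(m,\lambda_k)$.

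\textbf{Step 2: compare coefficients.} For $k\neq s$, the coefficient of $D$ at the places over $P_k$ is $-(e_k-1)$ for $k\ge1$ and $e_0+1$ for $k=0$. For $k\neq s$ we therefore need
\[
v_{P_k}(h)\,e_k+\frac{t\lambda_k}{\gcd(m,\lambda_k)}=-(e_k-1)\quad(k\ge1),\qquad
v_{P_0}(h)\,e_0+\frac{t\lambda_0}{\gcd(m,\lambda_0)}=e_0+1 .
\]
Reducing modulo $e_k$, and noting $e_k\mid m$, this forces $t\lambda_k/\gcd(m,\lambda_k)\equiv 1 \pmod{e_k}$ for $k\ge 1$ and $\equiv 1$ for $k=0$. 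Thus the residue is $1$ for every $k\ne s$.

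This is stated modulo $e_k$, whereas the theorem's condition is modulo $m$, so it has to be reconciled. Take $u=t$ with $t\lambda_k/\gcd(m,\lambda_k)\equiv 1$; then $u\mu_k\equiv u\mu_j$ is needed modulo $m$. Here is the subtlety: multiplying by $\gcd(m,\lambda_k)$ could be the intended normalization. I would check both directions carefully. The condition in the theorem says the values $u\lambda_k/\gcd(m,\lambda_k)$ agree modulo $m$. Multiplying $h$ by $p_k$ powers lets one adjust coefficients freely, modulo $e_k$, at every non-$s$ fibre. The remaining coefficient at $Q_s$ and the degree condition are then automatic, because $\deg D=0$ and $Q_s$ has degree one and is totally ramified, so its coefficient is determined. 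The real constraint is therefore that a single $t$ makes the residue of $t\lambda_k/\gcd(m,\lambda_k)$ a common prescribed value $c$ for all $k\ne s$.

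\textbf{Step 3: both directions.} Sufficiency: given $u$ with all $u\lambda_k/\gcd(m,\lambda_k)\equiv c\pmod m$, first show $\gcd(c,m)=1$ using $\gcd(\lambda_k/\gcd(m,\lambda_k),e_k)=1$. Then take $t=uc^{-1}$ so that the common residue becomes $1$. Define $h=\prod_k p_k^{n_k}$ with the integers $n_k$ solving the congruences above, and finally check that the divisor of $hy^t$ equals $D$ on the remaining place $Q_s$ via $\deg=0$. Necessity reverses this, with $u=t$; note that $t\neq0$, since otherwise the coefficient would have to be $\equiv 0$, not $1$ (using $e_k\ge 2$ for at least one $k\ne s$, which holds since $g\ge1$ is implicitly needed).

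\textbf{Main obstacle.} The bookkeeping of residues modulo $e_k$ against modulo $m$ is where I expect the most care to be needed. Getting from congruences modulo the various $e_k$ to a single congruence modulo $m$ as stated should use that the relevant quantity lifts consistently. I would try to handle it by writing $\lambda_k=\gcd(m,\lambda_k)\mu_k$ and exploiting the freedom in choosing $t$ modulo $m$.
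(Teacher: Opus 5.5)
Your Steps 1 and 2 are correct, and they are more direct than the paper's argument. The paper exhibits, for each residue $i$, an explicit regular differential and then combines Lemma \ref{differential_gap} with a degree count. You instead apply Theorem \ref{ThMaharaj} to the degree-zero divisor $D=(2g-2)Q_s-W$, so any generator of $\mathcal{L}(D)$ must be a single $h(x)y^t$. Two small corrections: in the necessity direction $h$ lies in $K'(x)$, not $K(x)$; and for $s\ge1$ it is the exponent of $p_s$, not a free choice of $v_{P_0}(h)$, that matches the fibre over $P_0$. Write $\mu_k=\lambda_k/\gcd(m,\lambda_k)$ and $e_k=m/\gcd(m,\lambda_k)$. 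Your criterion is that some $t$ satisfies $t\mu_k\equiv1\pmod{e_k}$ for all $k\neq s$. This is equivalent to the paper's intermediate condition \eqref{sym_condition}, which the paper rewrites as $u\lambda\lambda_k\equiv\gcd(m,\lambda_k)\pmod m$, with $t=u\lambda$.

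The gap is Step 3, and it cannot be closed: your criterion is not equivalent to the stated congruence modulo $m$, and the statement as written is false.

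\emph{Sufficiency.} Your claim $\gcd(c,m)=1$ fails when $u$ is not a unit. Take $m=4$ and $y^4=x(x-1)(x-2)^3$ over $\mathbb{F}_5$, with $s=0$ and $\lambda_0=-5$. Then $u=2$ gives $2\equiv2\equiv6\pmod 4$, so the stated condition holds. Yet $x$, $y$, $y^2/(x-2)$ and $y^3/(x-2)^2$ have pole divisors $4Q_\infty,5Q_\infty,6Q_\infty,7Q_\infty$. Hence $H(Q_\infty)=\langle4,5,6,7\rangle$ with $g=3$, and the Frobenius number is $3\neq 2g-1$. This agrees with Corollary \ref{condition2} and contradicts the theorem as stated. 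For unit $u$, sufficiency does hold. However, $c$ is then a unit because $\lambda_s=-\sum_{k\ne s}\lambda_kd_k$ and $\gcd(m,\lambda_s)=1$, not because $\gcd(\mu_k,e_k)=1$.

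\emph{Necessity.} Setting ``$u=t$'' only gives congruences modulo the $e_k$, not modulo $m$. Take $m=6$ and $y^6=x^4(x-1)^9(x-2)^4$ over $\mathbb{F}_7$, with $s=0$ and $\lambda_0=-17$. Here $(\mu_1,\mu_2,\mu_3)=(2,3,2)$ and $(e_1,e_2,e_3)=(3,2,3)$. The value $t=5$ satisfies your criterion, and indeed $y^5/(x^4(x-1)^8(x-2)^4)$ has divisor $(2g-2)Q_\infty-W$. Proposition \ref{gap_set} gives $G(Q_\infty)=\{1,2,5\}$ with $g=3$, so $H(Q_\infty)$ is symmetric. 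But $2u\equiv3u\pmod 6$ has no solution with $1\le u\le5$.

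The paper's proof breaks at the same place. It passes from $u\lambda\lambda_k\equiv\gcd(m,\lambda_k)\pmod m$ to $u\lambda_k/\gcd(m,\lambda_k)\equiv\lambda_s\pmod m$, dividing by $\gcd(m,\lambda_k)$ without dividing the modulus. Its final reformulation also admits non-unit $u$.

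What your Steps 1--2 actually prove is the corrected statement: $H(Q_s)$ is symmetric if and only if some $t$ satisfies $t\lambda_k\equiv\gcd(m,\lambda_k)\pmod m$ for all $k\neq s$. This reduces to the stated form only under extra hypotheses, for example $\gcd(m,\lambda_k)=1$ for all $k\ne s$ together with $u$ restricted to units.
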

   
\begin{proof}
    Suppose that $\lambda\lambda_s = am+1$ for some $\lambda\in \Z$. From Proposition \ref{gap_set}, we have
	\begin{align}
    G(Q_s) &= \left\{ mj+i ~\Big|~ 1 \leq i \leq m-1, \, 0 \leq j \leq  \sum_{k=0}^{r} \left\lceil \frac{i \lambda\lambda_k}{m} \right\rceil-2 \right\}\nonumber\\
              &= \left\{ mj+i ~\Big|~ 1 \leq i \leq m-1, \, 0 \leq j \leq  \sum_{k=0, k\neq s}^{r} \left\lceil \frac{i \lambda\lambda_k}{m} \right\rceil + ia-1 \right\} \label{one_gap}
   	\end{align}
    By the divisor \eqref{divisor3}, there exists a  Weil differential $\omega\in \Omega_F$ such that
    $$(\omega)_F =-\left(\frac{m}{\gcd(m,\lambda_0)}+1\right)\sum_{Q\in \P_{F},\,Q\mid P_0}Q  +\sum_{k=1}^{r}\left(\frac{m}{\gcd(m,\lambda_k)}-1\right)\sum_{Q\in \P_{F},\,Q\mid P_k}Q.$$
    For each $1\leq i\leq m-1$, using divisors \eqref{divisor2} and \eqref{divisor1}, we obtain
	$$ (y^{i\lambda})_F = \sum_{k=1,k\neq s}^{r}\frac{i\lambda\lambda_k}{\gcd(m,\lambda_k)}\sum_{Q\in \P_{F},\,Q\mid P_k}Q + (iam + i)Q_s,$$
    and 
	\begin{align*}
        \left(\left(z_s^{d_k}/z_k\right)^{\ceil*{\frac{i\lambda\lambda_k}{m}}}\right)_F = m\ceil*{\frac{i\lambda\lambda_k}{m}}d_k Q_s -\frac{m}{\gcd(m,\lambda_k)}\ceil*{\frac{i\lambda\lambda_k}{m}}\sum_{Q\in \P_{F},\,Q\mid P_k} Q 
    \end{align*}
	for each $ k\in \{0,1,\cdots,r\}\setminus\{s\}$.
    Thus we get 
    \begin{align*}
     \left(y^{i\lambda}\left(\prod_{k=0}^{r}\left(z_s^{d_k}/z_k\right)^{\ceil*{\frac{i\lambda\lambda_k}{m}}}\right)z_k^{-2}\omega\right)_F = \left(m\left( \sum_{k=0,k\neq s}^{r}\ceil*{\frac{i \lambda\lambda_k}{m}}d_k + ia -1\right) + i-1 \right) Q_s\\
      + \sum_{k=0,k\neq s}^{r}\left(\frac{m}{\gcd(m,\lambda_k)}-1 + \frac{i\lambda\lambda_k}{\gcd(m,\lambda_k)} -\frac{m}{\gcd(m,\lambda_k)}\ceil*{\frac{i\lambda\lambda_k}{m}}\right)\sum_{Q\in \P_{F},\,Q\mid P_k} Q.
    \end{align*}
    For each $k\in \{0,1,\cdots,r\}\setminus\{s\}$ and $1\leq i\leq m-1$, write $i\lambda\lambda_k = m\floor*{\frac{i\lambda\lambda_k}{m}}  + t_{ik}$ with $0\leq t_{ik}\leq m-1$.
	If $m\mid i\lambda\lambda_k$, then 
	\begin{equation}\label{sym_if1}
	    \frac{m}{\gcd(m,\lambda_k)}-1 + \frac{i\lambda\lambda_k}{\gcd(m,\lambda_k)} -\frac{m}{\gcd(m,\lambda_k)}\ceil*{\frac{i\lambda\lambda_k}{m}} = \frac{m}{\gcd(m,\lambda_k)} -1\geq 0.
	\end{equation}
    If $m\nmid i\lambda\lambda_k$, then $1\leq t_{ik}\leq m-1$ and 
    \begin{align}
        ~&\frac{m}{\gcd(m,\lambda_k)}-1 + \frac{i\lambda\lambda_k}{\gcd(m,\lambda_k)} -\frac{m}{\gcd(m,\lambda_k)}\ceil*{\frac{i\lambda\lambda_k}{m}} \nonumber\\
	  = ~&\frac{m}{\gcd(m,\lambda_k)}-1 + \frac{i\lambda\lambda_k}{\gcd(m,\lambda_k)} -\frac{m}{\gcd(m,\lambda_k)}\floor*{\frac{i\lambda\lambda_k}{m}} - \frac{m}{\gcd(m,\lambda_k)}\nonumber\\
      = ~&\frac{i\lambda\lambda_k}{\gcd(m,\lambda_k)} -\frac{m}{\gcd(m,\lambda_k)}\floor*{\frac{i\lambda\lambda_k}{m}}-1\nonumber\\
      = ~&\frac{i\lambda\lambda_k}{\gcd(m,\lambda_k)} -\frac{m}{\gcd(m,\lambda_k)}\left({\frac{i\lambda\lambda_k}{m}} - \frac{t_{ik}}{m}\right)-1 = \frac{t_{ik}}{\gcd(m,\lambda_k)}-1\geq 0. \label{sym_if2}
    \end{align}
    Hence 
	$$\frac{m}{\gcd(m,\lambda_k)}-1 + \frac{i\lambda\lambda_k}{\gcd(m,\lambda_k)} -\frac{m}{\gcd(m,\lambda_k)}\ceil*{\frac{i\lambda\lambda_k}{m}}\geq 0$$
	for all $k\in \{0,1,\cdots,r\}\setminus\{s\}$ and $1\leq i\leq m-1$.
    For each $1\leq i\leq m-1$ satisfying $\sum_{k=0,k\neq s}^r\ceil*{\frac{i \lambda\lambda_k}{m}}d_k + ia -1\geq 0$, by Lemma \ref{differential_gap}, we obtain
	$$m\left(\sum_{k=1,k\neq s}^r\ceil*{\frac{i \lambda\lambda_k}{m}}d_k + ia -1\right)  + i \in G(Q_s).$$
	Moreover, all gaps at $Q_s$ are obtained in this way according to \eqref{one_gap}.
    Note that every non-zero canonical divisor of $F$ has degree $2g-2$. Then by Lemma \ref{symmetric_to_canonical}, we conclude that $H(Q_s)$ is symmetric if and only if there exists $1\leq u\leq m-1$ such that 
	\begin{equation}\label{sym_condition}
	\frac{m}{\gcd(m,\lambda_k)}-1 + \frac{u\lambda\lambda_k}{\gcd(m,\lambda_k)} -\frac{m}{\gcd(m,\lambda_k)}\ceil*{\frac{u\lambda\lambda_k}{m}} = 0
	\end{equation}
	for all $k\in \{0,1,\cdots,r\}\setminus\{s\}$.
    
	If $m\mid u\lambda\lambda_k$ for some $k\in \{0,1,\cdots,r\}\setminus\{s\}$, then $t_{uk} = 0$. It follows from \eqref{sym_if1} that \eqref{sym_condition} holds if and only if $\gcd(m,\lambda_k) = m$.
	Thus \eqref{sym_condition} holds if and only if $t_{uk} \equiv 0 \pmod m$.
	
	If $m\nmid u\lambda\lambda_k$ for some $k\in \{0,1,\cdots,r\}\setminus\{s\}$, it follows from \eqref{sym_if2} that \eqref{sym_condition} holds if and only if $t_{uk} \equiv \gcd(m,\lambda_k)$.
	Since $t_{uk} \equiv i\lambda\lambda_k$ for all $k\in \{0,1,\cdots,r\}\setminus\{s\}$, we have
    \begin{align*}
    &~\eqref{sym_condition} \text{ holds for all } k\in \{0,1,\cdots,r\}\setminus\{s\}\\
    \Leftrightarrow&~ t_{uk} \equiv \gcd(m,\lambda_k) \pmod m \text{ for all } k\in \{0,1,\cdots,r\}\setminus\{s\}\\
    \Leftrightarrow&~ u\lambda \lambda_k\equiv \gcd(m,\lambda_k) \pmod m \text{ for all } k\in \{0,1,\cdots,r\}\setminus\{s\}\\
    \Leftrightarrow&~  \frac{u\lambda_k}{\gcd(m,\lambda_k)} \equiv \lambda_s\pmod m \text{ for all } k\in \{0,1,\cdots,r\}\setminus\{s\}.
    \end{align*}
    Therefore $H(Q_s)$ is symmetric if and only if $\frac{u\lambda_i}{\gcd(m,\lambda_i)}\equiv \frac{u\lambda_j}{\gcd(m,\lambda_j)}\pmod m$ for all $i,j\in \{0,1,\cdots,r\}\setminus\{s\}$.
\end{proof}
    In \cite[Theorems A and B]{mendozaKummerExtensions2023}, for the infinite place $Q_\infty$, Mendoza provided some conditions for the Weierstrass semigroup $H(Q_\infty)$ to be symmetric. We generalize these results to arbitrary totally ramified places of degree one in the following two corollaries.
\begin{corollary}\label{condition1}
    For each $0\leq i\leq r$, write $\lambda_i = mb_i + \bar{\lambda}_i$, where $b_i\in \Z$ and $1\leq \bar{\lambda}_i\leq m$. Suppose that $d_s = \gcd(m,\lambda_{s}) = 1$ for some $0\leq s\leq r$.
    Then 
    $$F_{H(Q_s)} = m\left(\sum_{i=0}^{r} d_i-2\right)-\sum_{i\neq s,\,i=0}^{r}\bar{\lambda}_i d_i \text{ and } H(Q_s) \text{ is symmetric},$$
    if and only if $\bar{\lambda}_i\mid m$ for all $i\in \{0,1,\cdots,r\}\setminus\{s\}$.
\end{corollary}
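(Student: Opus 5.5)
The plan is to compare the quantity $N:=m\left(\sum_{i=0}^{r} d_i-2\right)-\sum_{i\neq s,\,i=0}^{r}\bar{\lambda}_i d_i$ with $2g-1$, and to show that $N$ is always a gap when it is positive.

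\emph{Step 1: computing $2g-1$.} From the genus formula and $d_s=\gcd(m,\lambda_s)=1$ we get
$$2g-1 \;=\; 1-2m+\sum_{i=0}^r (m-\gcd(m,\lambda_i))d_i \;=\; m\Big(\sum_{i=0}^r d_i-2\Big)-\sum_{i\neq s}\gcd(m,\lambda_i)d_i .$$
Since $\gcd(m,\lambda_i)=\gcd(m,\bar{\lambda}_i)\le \bar{\lambda}_i$, this gives $2g-1\ge N$. Equality holds if and only if $\gcd(m,\bar\lambda_i)=\bar\lambda_i$ for every $i\neq s$, that is, if and only if $\bar\lambda_i\mid m$ for all $i\neq s$.

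\emph{Step 2: $N$ is a gap whenever $N\ge 1$.} I will use \eqref{gap_set_simple_form} with $\lambda$ the inverse of $\lambda_s$ modulo $m$. Take $i=\bar\lambda_s$. Since $\gcd(m,\lambda_s)=1$ and $m\ge2$, we have $1\le i\le m-1$.

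Because $i\lambda\equiv 1\pmod m$, Lemma \ref{sum_mod} (with $a_k=0$) lets me replace $i\lambda$ by $1$ inside the sum. This gives
$$\sum_k\ceil*{\tfrac{i\lambda\lambda_k}{m}}d_k=\sum_k\ceil*{\tfrac{\lambda_k}{m}}d_k=\sum_k(b_k+1)d_k,$$
using $1\le\bar\lambda_k\le m$. So the largest candidate gap in the residue class of $i$ is
$$m\Big(\sum_k(b_k+1)d_k-2\Big)+\bar\lambda_s .$$
From $\sum_k\lambda_kd_k=0$ we get $m\sum_k b_kd_k=-\sum_k\bar\lambda_kd_k$. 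Together with $d_s=1$, this shows the quantity above equals $N$. Hence, by Proposition \ref{gap_set}, $N\in G(Q_s)$ exactly when the corresponding $j=\sum_k(b_k+1)d_k-2$ is $\ge 0$. Since $N\equiv \bar\lambda_s\pmod m$ with $1\le\bar\lambda_s\le m-1$, this is equivalent to $N\ge1$.

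\emph{Step 3: both directions.} Suppose $\bar\lambda_i\mid m$ for all $i\ne s$. Step 1 gives $N=2g-1$, which is $\ge1$ assuming $g\ge1$ (the degenerate genus-zero case would be set aside). Then Step 2 shows $N$ is a gap, so $2g-1=N\le F_{H(Q_s)}\le 2g-1$. Thus $F_{H(Q_s)}=N$ and $H(Q_s)$ is symmetric.

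Conversely, if $F_{H(Q_s)}=N$ and $H(Q_s)$ is symmetric, then $N=2g-1$, and Step 1 forces $\bar\lambda_i\mid m$ for all $i\neq s$.

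The main obstacle is Step 2: identifying the right residue $i=\bar\lambda_s$, reducing the ceiling sum via Lemma \ref{sum_mod}, and checking that the index $j$ is nonnegative. This route avoids Theorem \ref{symmetric_thm}, which would require an awkward choice of $u$.
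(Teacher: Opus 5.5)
Your proof is correct, but the forward direction takes a different route from the paper's. The converse is the same in both: symmetry forces $N=2g-1$, and the termwise comparison $\gcd(m,\bar\lambda_i)\le\bar\lambda_i$ gives $\bar\lambda_i\mid m$.

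For the forward direction, the paper claims that $\bar\lambda_i\mid m$ implies $\lambda_i/\gcd(m,\lambda_i)\equiv 1\pmod m$. It then appeals to Theorem~\ref{symmetric_thm}, i.e.\ to the Weil-differential criterion that $(2g-2)Q_s$ be canonical. You avoid differentials altogether. Choosing the residue $i=\bar\lambda_s$ makes $i\lambda\equiv 1$, so Lemma~\ref{sum_mod} reduces the ceiling sum in Proposition~\ref{gap_set} to $\sum_k(b_k+1)d_k$. The top gap of that class is then exactly $N$, and $N\le F_{H(Q_s)}\le 2g-1$ finishes the argument.

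Besides being more elementary, your route gives the unconditional bound $N\le F_{H(Q_s)}$ whenever $N\ge 1$. It is also the more reliable argument, for two reasons:
\begin{itemize}
\item The congruence used in the paper fails in general: for $m=6$ and $\lambda_i=8$ one has $\bar\lambda_i=2\mid 6$, but $8/2=4\not\equiv 1\pmod 6$.
\item For $y^6=x^8(x-1)^9(x-2)(x-3)$ at $Q_\infty$, the condition of Theorem~\ref{symmetric_thm} as stated has no solution $u$, since it requires $4u\equiv 3u\equiv u\pmod 6$. Yet $G(Q_\infty)=\{1,2,3,4,5,11\}$ with $g=6$, so $H(Q_\infty)$ is symmetric.
\end{itemize}
The paper's route can be rescued by taking $u=\bar\lambda_s$ in the intermediate condition $u\lambda\lambda_k\equiv\gcd(m,\lambda_k)\pmod m$ from that theorem's proof. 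That is exactly your choice of residue class.
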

\begin{proof}
    If $\bar{\lambda}_i\mid m$ for all $i\in\{0,1,\cdots,r\}\setminus\{s\}$, then $\frac{\lambda_i}{\gcd(m,\lambda_i)} \equiv 1 \pmod m$ for all $i\in \{0,1,\cdots,r\}\setminus\{s\}$. It follows from Theorem \ref{symmetric_thm} that $H(Q_s)$ is symmetric and 
    \begin{align*}
	F_{H(Q_s)} = 2g-1 &= m\left(\sum_{i=0}^{r}d_i-2\right)+2-\sum_{i=0}^{r}\gcd(m,\lambda_i)\cdot d_i-1 \\
	&= m\left(\sum_{i=0}^{r}d_i-2\right)-\sum_{i\neq s,\,i=0}^{r}\bar{\lambda}_i d_i.
	\end{align*}

    Conversely, assume that $F_{H(Q_s)} = m\left(\sum_{i=0}^{r}d_i-2\right)-\sum_{i\neq s,\,i=0}^{r}\bar{\lambda}_i d_i$ and $H(Q_s)$ is symmetric. Then
    \begin{align*}
	F_{H(Q_s)} &= m\left(\sum_{i=0}^{r}d_i-2\right)-\sum_{i\neq s,\,i=0}^{r}\bar{\lambda}_i d_i \\
	&= 2g-1 = m\left(\sum_{i=0}^{r}d_i-2\right)- \sum_{i\neq s,\,i=0}^{r}\gcd(m,\lambda_i)\cdot d_i.
	\end{align*}
    This implies that $\bar{\lambda}_i\mid m$ for all $i\in \{0,1,\cdots,r\}\setminus\{s\}$.
\end{proof}

\begin{corollary}\label{condition2}
     Let $0\leq s\leq r$. Suppose that $\gcd(m,\lambda_i) = 1$ for all $0\leq i\leq r$ and $d_s = 1$.
    Then $H(Q_s)$ is symmetric if and only if $\lambda_i \equiv \lambda_j \pmod m$ for all $i,j\in \{0,1,\cdots,r\}\setminus\{s\}$.
\end{corollary}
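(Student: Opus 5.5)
This follows directly from Theorem \ref{symmetric_thm}. Since $\gcd(m,\lambda_i)=1$ for every $0\leq i\leq r$, each quotient $\frac{u\lambda_i}{\gcd(m,\lambda_i)}$ equals $u\lambda_i$. Theorem \ref{symmetric_thm} then says that $H(Q_s)$ is symmetric if and only if there is some $1\leq u\leq m-1$ with $u\lambda_i\equiv u\lambda_j\pmod m$ for all $i,j\in\{0,1,\cdots,r\}\setminus\{s\}$. It remains to show that this is equivalent to $\lambda_i\equiv\lambda_j\pmod m$ for all such $i,j$.

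For the direction ($\Leftarrow$), suppose $\lambda_i\equiv\lambda_j\pmod m$ for all $i,j\neq s$. Then any $u$ works; take $u=1$, which lies in $[1,m-1]$ because $m\geq 2$. Multiplying the congruence by $u$ gives the condition of Theorem \ref{symmetric_thm}, so $H(Q_s)$ is symmetric.

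For the direction ($\Rightarrow$), this is the only place where a little care is needed. A general $u$ need not be invertible modulo $m$, so $u$ cannot simply be cancelled. Instead I would go back to the proof of Theorem \ref{symmetric_thm}. There the symmetry of $H(Q_s)$ is shown to be equivalent to the existence of $u$ with $\frac{u\lambda_k}{\gcd(m,\lambda_k)}\equiv\lambda_s\pmod m$ for all $k\neq s$. With all gcds equal to $1$, this reads $u\lambda_k\equiv\lambda_s\pmod m$ for every $k\neq s$.

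Because $\gcd(m,\lambda_s)=1$, the element $\lambda_s$ is a unit modulo $m$. From $u\lambda_k\equiv\lambda_s$ it follows that $u$ is also a unit modulo $m$: any common divisor of $u$ and $m$ divides $\lambda_s$, and hence divides $\gcd(m,\lambda_s)=1$. We may therefore cancel $u$ and conclude $\lambda_k\equiv u^{-1}\lambda_s\pmod m$ for all $k\neq s$. All these $\lambda_k$ are congruent to the same residue, so $\lambda_i\equiv\lambda_j\pmod m$ for all $i,j\neq s$.

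There is an alternative that uses only the statement of Theorem \ref{symmetric_thm}, not its proof. Since each $\lambda_k$ is a unit modulo $m$, choosing any $k\neq s$ gives $\gcd(u\lambda_k,m)=\gcd(u,m)$. Cancelling $u$ still yields $\lambda_i\equiv\lambda_j\pmod{m/\gcd(u,m)}$, but this is weaker than what we need. For that reason I would rely on the proof-internal form $u\lambda_k\equiv\lambda_s$, where the invertibility of $u$ follows immediately.

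The degenerate case $r=1$ causes no trouble. The index set $\{0,1,\cdots,r\}\setminus\{s\}$ then has a single element, so both conditions hold trivially and $H(Q_s)$ is symmetric.
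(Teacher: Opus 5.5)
Your ($\Rightarrow$) argument is correct, and it is more careful than the paper's. The paper only notes $\lambda_i/\gcd(m,\lambda_i)=\lambda_i$ and then invokes Theorem~\ref{symmetric_thm}, which cancels $u$ without justification. You are right to return to the criterion established inside that proof. The safest form to cite is the vanishing of the coefficients in \eqref{sym_condition}, i.e.\ $u\lambda\lambda_k\equiv\gcd(m,\lambda_k)\pmod m$ for all $k\neq s$. (The paper's final rewriting of this as $\frac{u\lambda_k}{\gcd(m,\lambda_k)}\equiv\lambda_s\pmod m$ is in general only valid modulo $m/\gcd(m,\lambda_k)$.) Under your hypotheses this condition is exactly $u\lambda_k\equiv\lambda_s\pmod m$, and your unit argument then finishes.

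The problem is your ($\Leftarrow$) direction, which still uses the \emph{statement} of Theorem~\ref{symmetric_thm} with $u=1$. That ``if'' direction is false as stated, for exactly the non-invertibility reason you noticed. Take $K=\mathbb{F}_5$, $m=4$ and $y^4=x(x-1)^3(x-2)$. Then $(\lambda_0,\lambda_1,\lambda_2,\lambda_3)=(-5,1,3,1)$, all prime to $4$, $g=3$, and we take $s=0$. For $u=2$, every $u\lambda_k$ with $k\neq 0$ is $\equiv 2\pmod 4$. Yet $x$, $y$, $y^2/(x-1)$, $y^3/(x-1)^2$ have poles only at $Q_\infty$, of orders $4,5,6,7$. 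Hence $G(Q_\infty)=\{1,2,3\}$ and $F_{H(Q_\infty)}=3\neq 5=2g-1$. This is also why the statement looked too weak for ($\Rightarrow$): the theorem as stated and the corollary contradict each other on this example.

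Under the internal criterion, $u=1$ does not work either, since it would require $\lambda_k\equiv\lambda_s$. The fix is short. Suppose $\lambda_k\equiv c\pmod m$ for all $k\neq s$. Then $c$ is a unit, and the representative $u\in[1,m-1]$ of $\lambda_s c^{-1}$ satisfies $u\lambda_k\equiv\lambda_s$, i.e.\ $u\lambda\lambda_k\equiv 1$, for every $k\neq s$. So the differential built for $i=u$ in the proof has divisor $(2g-2)Q_s$, and $H(Q_s)$ is symmetric by Lemma~\ref{symmetric_to_canonical}. With this change your proof is complete, whereas the paper's one-line derivation relies on the theorem's statement in both directions.
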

\begin{proof}
    Since $\gcd(m,\lambda_i) = 1$ for all $0\leq i\leq r$, we have $\frac{\lambda_i}{\gcd(m,\lambda_i)} \equiv \lambda_i \pmod m$ for all $i\in \{0,1,\cdots,r\}\setminus\{s\}$.
    It follows from Theorem \ref{symmetric_thm} that $H(Q_s)$ is symmetric if and only if $\lambda_i \equiv \lambda_j \pmod m$ for all $i,j\in \{0,1,\cdots,r\}\setminus\{s\}$.
\end{proof}

    Besides the conditions in the above corollaries, there may exist other situations in which the Weierstrass semigroup is symmetric, as shown in the following example.
\begin{example}
    Let $F = \mathbb{F}_{11}(x,y)/\mathbb{F}_{11}(x)$ be a Kummer extension defined by $y^8 = x^3(x-1)^6$. Take $m = 8$, $\lambda_1 = 3$, $\lambda_2 = 6$ and $\lambda_ 0 = -9$. Then $\gcd(m,\lambda_1) = 1$ and $\gcd(m,\lambda_2) = 2$, so $\frac{\lambda_1}{\gcd(m,\lambda_1)} = 3 =  \frac{\lambda_2}{\gcd(m,\lambda_2)}$. Hence $H(Q_\infty)$ is symmetric. 
\end{example}

\section{The minimal generating sets of Weierstrass semigroups on Kummer extensions}\label{section4}
In this section, we investigate the minimal generating sets of the Weierstrass semigroups at several totally ramified places of degree one on the Kummer extension $F = K(x,y)/K(x)$ defined by the equation \eqref{kummer_extension}.
Suppose that $\bar{r}:=\min\{r+1,|K|\}-1$ and that $\{\ell_0,\ell_1,\cdots,\ell_r\}$ is a permutation of $\{0,1,\cdots,r\}$. We present our main results below.
\begin{theorem}\label{thm_minimal_set}
	Let $1\leq s\leq \bar{r}$ and $\la\in \Z$ with $\gcd(m,\la) = 1$. Suppose that $d_{\ell_k} = \gcd(m,\la_{\ell_k}) = 1$ for all $0\leq k\leq s$.
	Then
	\begin{align*}
		\Gamma(Q_{\ell_0},\cdots, Q_{\ell_s}) = \Bigg\{(mj_0+i\la\la_{\ell_0},\cdots, mj_s+i\la\la_{\ell_s})\in \mathbb{N}^{s+1} ~\Big| ~1\leq i \leq m-1 , \\
		 ~j_k\geq \ceil*{\frac{-i\la\la_{\ell_k}}{m}}\text{ for }0\leq k\leq s,~ \sum_{k=0}^{s}j_k=\sum_{k=0}^r\ceil*{\frac{i\la\la_k}{m}}d_k- \sum_{k=0}^{s}\ceil*{\frac{i\la\la_{\ell_k}}{m}}\Bigg\}.
		\end{align*}
	Moreover, for each $(mj_0+i\la\la_{\ell_0},\cdots, mj_s+i\la\la_{\ell_s})\in \Gamma(Q_{\ell_0},\cdots, Q_{\ell_s})$, 
	$$\left( \frac{\prod_{k=s+1}^r z_{\ell_k}^{\ceil*{\frac{i\la\la_{\ell_k}}{m}}}}{y^{i\la} \prod_{k=0}^{s} z_{\ell_k}^{j_k} }\right)_{\infty} = \sum_{k=0}^{s}(mj_k+i\la\la_{\ell_k})Q_{\ell_k}.$$
	In addition, the set $\Gamma(Q_{\ell_0},\cdots, Q_{\ell_s}) = \varnothing$ if and only if $\max_{1\leq i\leq m}\sum_{k=0}^r\ceil* {\frac{i\la_{k}}{m}}d_k \leq s\leq \bar{r}$.
\end{theorem}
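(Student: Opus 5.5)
The plan is to work directly with the dimension formula of Corollary \ref{D_dimension} and the characterization of $\Gamma$ in Lemma \ref{discrepancy}. Write $\mathbf a=(a_0,\dots,a_s)\in\N_0^{s+1}$ and $A=\sum_{k=0}^s a_kQ_{\ell_k}$. By Corollary \ref{D_dimension}, $\ell(A)$ is a sum over $t\in\{0,\dots,m-1\}$ of terms
$$\max\{0,T_t(A)\},\qquad T_t(A)=\sum_{k=0}^{s}\floor*{\frac{a_k+t\la_{\ell_k}}{m}}+\sum_{k=s+1}^{r}\floor*{\frac{t\la_{\ell_k}}{m}}d_{\ell_k}+1 .$$
Replacing $A$ by $A-Q_{\ell_k}$ changes only the single summand with $t\equiv -a_k\la_{\ell_k}^{-1}\pmod m$, and lowers $T_t$ by exactly $1$ there. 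This is the same observation used in the proof of Proposition \ref{describe_gap_set}.

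\textbf{Step 1 (characterizing $\Gamma$).} First I treat $\mathbf a\in\N^{s+1}$. Apply Lemma \ref{discrepancy} to two places $Q_{\ell_k},Q_{\ell_{k'}}$. The conditions $\ell(A)=\ell(A-Q_{\ell_k})+1$ and $\ell(A)=\ell(A-Q_{\ell_k}-Q_{\ell_{k'}})+1$ force two things. The distinguished indices $t$ for $k$ and $k'$ must coincide, since otherwise two different summands each drop and $\ell(A-Q_{\ell_k}-Q_{\ell_{k'}})=\ell(A)-2$. And in that common slot, $\max\{0,T_t\}-\max\{0,T_t-1\}=1$ together with $\max\{0,T_t\}-\max\{0,T_t-2\}=1$ forces $T_t(A)=1$. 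The common $t$ gives $a_k\equiv -t\la_{\ell_k}\pmod m$ for all $k$. Setting $i\la\equiv -t\pmod m$ (with $\gcd(m,\la)=1$), this means $a_k=mj_k+i\la\la_{\ell_k}$, where $1\le i\le m-1$. The case $t=0$ would make every $a_k\equiv0$, and then $\mathbf a\in\N^{s+1}$ together with Lemma \ref{lemma5} ($a_k$ a gap) excludes it via Proposition \ref{gap_set}.

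Next I compute $T_t(A)$ using Lemma \ref{sum_mod} to replace $t$ by $-i\la$. This gives
$$T_t(A)=\sum_{k=0}^s j_k-\sum_{k=0}^r\ceil*{\frac{i\la\la_k}{m}}d_k+\sum_{k=0}^s\ceil*{\frac{i\la\la_{\ell_k}}{m}}+1 .$$
So $T_t=1$ is exactly the displayed sum condition on the $j_k$. Positivity $a_k\ge1$ is $j_k\ge\ceil*{-i\la\la_{\ell_k}/m}$, because $m\nmid i\la\la_{\ell_k}$. Conversely, any tuple of this shape satisfies all the conditions of Lemma \ref{discrepancy}, including $\mathbf a\in H$, since every place drops $\ell$ by $1$ in the same slot where $T_t=1$. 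The remaining point is to check that $\Gamma$ contains no tuple with a zero coordinate. By Proposition \ref{minimal_for_all} and Lemma \ref{lemma5} its elements lie in $G\times\cdots\times G\subseteq\N^{s+1}$, which settles this.

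\textbf{Step 2 (the functions).} I will compute the divisor of the displayed function from \eqref{divisor1} and \eqref{divisor2}. At $P_{\ell_k}$ with $k>s$ the valuation is $\frac{m}{\gcd}\ceil*{\cdot}-\frac{i\la\la_{\ell_k}}{\gcd}\ge0$. At $Q_{\ell_k}$ with $k\le s$ it equals $-(mj_k+i\la\la_{\ell_k})$, where the $z$-contributions at infinity are handled via $z_0=1$ and $\la_0$. The only bookkeeping here is the case where some $\ell_k=0$, in which the pole of $x$ absorbs the degree count; the sum condition on the $j_k$ is exactly what makes this balance.

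\textbf{Step 3 (emptiness).} $\Gamma\neq\varnothing$ iff for some $i$ the prescribed sum is at least the sum of the lower bounds. Since $\ceil*{-x}=1-\ceil*{x}$ for nonintegral $x$, this reads $\sum_{k=0}^r\ceil*{i\la\la_k/m}d_k\ge s+1$. As $i$ runs over $1,\dots,m-1$, $i\la$ runs over the nonzero residues mod $m$, and Lemma \ref{sum_mod} lets me replace $i\la$ by $i$. The value $i=m$ contributes $\sum\la_kd_k=0$, so including it is harmless. This yields the stated criterion. I expect the main obstacle to be Step 1's careful use of Lemma \ref{discrepancy}, namely showing that the slots must coincide and pinning $T_t=1$; the rest is bookkeeping.
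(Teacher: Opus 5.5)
Your proof is correct, and it takes a genuinely different route from the paper's.

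\textbf{The paper's route.} The paper proves the set equality by induction on $s$:
\begin{itemize}
\item The case $s=1$ (Proposition \ref{two_places_Gamma}) rests on Homma's bijection criterion (Lemma \ref{minimal_generating_two}).
\item The inclusion $\Gamma_\la\subseteq\Gamma$ (Proposition \ref{S_subseteq_Gamma}) exhibits the functions. It then proves minimality by contradiction, moving the last coordinate onto $Q_{\ell_{s-1}}$ and calling on the induction hypothesis.
\item The reverse inclusion (Proposition \ref{Gamma_sub_S}) combines Lemma \ref{i_is_same} with Matthews' lub description (Theorem \ref{lub_semi_group}). This produces an element of $\tilde\Gamma$ below $\n$, which is then forced to equal $\n$.
\end{itemize}

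\textbf{Your route.} The paper uses the slot counting of your Step 1 only in Lemma \ref{i_is_same}, to show the slots coincide. You push the same count one step further. The conditions $\max\{0,T_t\}-\max\{0,T_t-1\}=1$ and $\max\{0,T_t\}-\max\{0,T_t-2\}=1$ together force $T_t=1$, and this single equation is exactly the sum condition on the $j_k$. Because Lemma \ref{discrepancy} is an equivalence, you get both inclusions at once, uniformly for all $s\ge 1$. You need no induction, no Homma permutation argument and no lub theorem.

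\textbf{What each approach buys.} Your argument depends on the explicit formula of Corollary \ref{D_dimension}, where removing a totally ramified $Q_{\ell_k}$ lowers exactly one slot. In the converse direction it also needs $s+1\le|K|$ for the Carvalho--Torres criterion to conclude $\mathbf a\in H$; alternatively, you can use the function from Step 2 there. The paper's approach stays inside Matthews' general framework, which does not need such a formula.

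\textbf{Minor points.} In Step 2, some $\ell_k$ always equals $0$, so the bookkeeping at $P_0$ is always needed. If $\ell_k=0$ with $k\le s$, the sum condition gives pole order $mj_k+i\la\la_0$ at $Q_0$. If $k>s$, it gives the nonnegative order $(m\ceil*{i\la\la_0/m}-i\la\la_0)/\gcd(m,\la_0)$ over $P_0$. Step 3 is essentially the paper's argument, except that you keep general $\la$ and pass to $\la=1$ via Lemma \ref{sum_mod}. You also correctly note that the value $i=m$ contributes $0<s+1$, so including it is harmless.
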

A result similar to Theorem \ref{thm_minimal_set} also appears in the recent work \cite[Corollary 3.5]{castellanosGeneralizedWeierstrass2026}.
In contrast to \cite[Corollary 3.5]{castellanosGeneralizedWeierstrass2026}, we allow the exponents $\lambda_i$ $(1\leq i\leq r)$ to be arbitrary integers, rather than restricting them to the range $1\leq \lambda_i \leq m-1$. In addition, we do not require $f(x)$ in the equation \eqref{kummer_extension} to be completely split over $K$.
Moreover, we provide explicit functions whose pole divisors have coefficients lying in  $\Gamma(Q_1,\cdots,Q_s)$, along with a necessary and sufficient condition for $\Gamma(Q_1,\cdots,Q_s)$ to be nonempty.
By selecting a suitable $\lambda$, we are able to obtain a simplified expression for $\Gamma(Q_{\ell_0},\cdots,Q_{\ell_s})$ when some of the $\lambda_{\ell_k}$ are congruent modulo $m$.
\begin{corollary}\label{Gamma_some_equal}
    Let $1\leq s\leq \bar{r}$ and $0\leq t\leq s$. Suppose that $d_{\ell_k} = \gcd(m,\la_{\ell_k}) = 1$ for all $0\leq k\leq s$ and $\lambda_{\ell_0} \equiv \cdots \equiv\lambda_{\ell_t} \pmod m$.
    Let $\lambda\in\mathbb{Z}$ be the inverse of $\lambda_{\ell_0}$ modulo $m$.
	Then for $1\leq s\leq \max_{1\leq i\leq m-1}\sum_{k=0}^r\ceil* {\frac{i\la_{k}}{m}}d_k-1$,
	\begin{align*}
		\Gamma(Q_{\ell_0},\cdots, Q_{\ell_s}) = \Bigg\{(mj_0+i,\cdots, mj_t+i,mj_{t+1} + i\lambda\lambda_{\ell_{t+1}},\cdots, mj_s + i\lambda\lambda_{\ell_s})\in \mathbb{N}^{s+1} ~\Big| \\
        1\leq i \leq m-1,\, j_k\geq 0 \text{ for } 0\leq k\leq t,\, j_k\geq \ceil*{\frac{-i\la\la_{\ell_k}}{m}}\text{ for }t+1\leq k\leq s,\\
        \sum_{k=0}^{s}j_k=\sum_{k=0}^r\ceil*{\frac{i\la\la_k}{m}}d_k- \sum_{k=t+1}^{s}\ceil*{\frac{i\la\la_{\ell_k}}{m}} -t-1 \Bigg\}.
		\end{align*}
	For each $(mj_0+i,\cdots, mj_t+i,mj_{t+1} + i\lambda\lambda_{\ell_{t+1}},\cdots, mj_s + i\lambda\lambda_{\ell_s})\in \Gamma(Q_{\ell_0},\cdots, Q_{\ell_s})$,
	$$\left( \frac{\prod_{k=s+1}^r z_{\ell_k}^{\ceil*{\frac{i\la\la_{\ell_k}}{m}}}}{y^{i\la} \prod_{k=0}^{s} z_{\ell_k}^{j_k} }\right)_{\infty} = \sum_{k=0}^{t}(mj_k+i)Q_{\ell_k} + \sum_{k=t+1}^{s}(mj_k+i\lambda\lambda_{\ell_k})Q_{\ell_k}.$$
\end{corollary}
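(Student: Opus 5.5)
This corollary is a specialization of Theorem \ref{thm_minimal_set}, obtained by choosing $\lambda$ to be the inverse of $\lambda_{\ell_0}$ modulo $m$. I will apply Theorem \ref{thm_minimal_set} with this choice of $\lambda$; it is legitimate since $\gcd(m,\lambda)=1$. The hypothesis on $s$ is needed only to stay in the range where the theorem gives a nonempty, explicitly described set; otherwise the description is unchanged.

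For $0\le k\le t$ we have $\lambda_{\ell_k}\equiv\lambda_{\ell_0}\pmod m$, so $\lambda\lambda_{\ell_k}=1+b_km$ for some $b_k\in\Z$. As in the last paragraph of the proof of Proposition \ref{gap_set}, for $1\le i\le m-1$ this gives
$$\ceil*{\frac{-i\lambda\lambda_{\ell_k}}{m}}=-ib_k \quad\text{and}\quad \ceil*{\frac{i\lambda\lambda_{\ell_k}}{m}}=ib_k+1 .$$
The $k$-th coordinate $mj_k+i\lambda\lambda_{\ell_k}$ from Theorem \ref{thm_minimal_set} equals $m(j_k+ib_k)+i$. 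I will substitute $j_k'=j_k+ib_k$ for $0\le k\le t$ and leave $j_k$ unchanged for $k>t$. Under this substitution:
\begin{itemize}
\item the constraint $j_k\ge\ceil*{-i\lambda\lambda_{\ell_k}/m}$ becomes $j_k'\ge 0$;
\item in the sum condition, $\sum_{k=0}^s j_k$ gains $\sum_{k=0}^t ib_k$;
\item on the right-hand side, $\sum_{k=0}^t\ceil*{i\lambda\lambda_{\ell_k}/m}=\sum_{k=0}^t ib_k+(t+1)$.
\end{itemize}
The $ib_k$ terms cancel, and the sum condition becomes exactly the stated one, with the term $-t-1$. The change of variables is a bijection between the index sets, so the two descriptions of $\Gamma(Q_{\ell_0},\cdots,Q_{\ell_s})$ coincide.

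For the function, Theorem \ref{thm_minimal_set} gives the pole divisor of
$$\frac{\prod_{k=s+1}^r z_{\ell_k}^{\ceil*{\frac{i\la\la_{\ell_k}}{m}}}}{y^{i\la}\prod_{k=0}^s z_{\ell_k}^{j_k}}$$
in terms of the original $j_k$. Writing the coordinates in the new variables turns that pole divisor into $\sum_{k\le t}(mj_k'+i)Q_{\ell_k}+\sum_{k>t}(mj_k+i\lambda\lambda_{\ell_k})Q_{\ell_k}$. The function displayed in the corollary should then be read with $j_k$ denoting the original parameters, $j_k=j_k'-ib_k$. If the statement is instead meant with the new $j_k$ in the exponent, I would check this directly by computing the divisor via \eqref{divisor1} and \eqref{divisor2}, as in the proof of Theorem \ref{symmetric_thm}. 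The main obstacle is keeping this bookkeeping of the shift consistent in the function formula; the set identity itself is routine.
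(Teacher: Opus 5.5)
Your derivation of the set identity is the paper's proof. The paper also applies Theorem \ref{thm_minimal_set} with $\lambda$ the inverse of $\lambda_{\ell_0}$, writes $\lambda\lambda_{\ell_k}=1+b_km$ for $k\le t$, and shifts $j_k$ by $ib_k$. It then just says ``we obtain the desired expression'' and never treats the function separately.

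On the function formula, you should resolve your hedge rather than leave it open, because the fallback check you propose would fail. Put the shifted $j_k$ in the exponent and look at a finite place $Q_{\ell_k}$ with $k\le t$. There $v_{Q_{\ell_k}}(z_{\ell_k})=m$, $v_{Q_{\ell_k}}(y)=\lambda_{\ell_k}$, and all other factors are units. So the displayed function has valuation $-i\lambda\lambda_{\ell_k}-mj_k=-(mj_k+i)-imb_k$, and its pole order there is $mj_k+i$ only when $b_k=0$.

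A concrete counterexample: take $y^3=x^2(x-1)^2$ over any $K$ of characteristic $\neq 3$, with $\ell_0=1$, $\ell_1=2$, $\ell_2=0$, $t=s=1$, $\lambda=2$ (so $b_0=b_1=1$). The corollary gives $\Gamma(Q_1,Q_2)=\{(1,1)\}$ with $i=1$ and $j_0=j_1=0$. Since $z_0=1$, the displayed function is $1/y^2$, whose pole divisor is $4Q_1+4Q_2$. The correct function is $x(x-1)/y^2$, with pole divisor $Q_1+Q_2$.

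So the second assertion holds only under your reading, with exponents $j_k-ib_k$ for $k\le t$ (the theorem's original parameters). Equivalently, keep the new $j_k$ but insert the factor $\prod_{k=0}^{t}z_{\ell_k}^{ib_k}$ in the numerator. State it that way and it follows immediately from Theorem \ref{thm_minimal_set}, which completes your argument.
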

\begin{proof}
    For $1\leq s\leq \max_{1\leq i\leq m-1}\sum_{k=0}^r\ceil* {\frac{i\la_{k}}{m}}d_k-1$, it follows from Theorem \ref{thm_minimal_set} that 
    \begin{align*}
		\Gamma(Q_{\ell_0},\cdots, Q_{\ell_s}) = \Bigg\{(mj'_0+i\la\la_{\ell_0},\cdots, mj'_s+i\la\la_{\ell_s})\in \mathbb{N}^{s+1} ~\Big| ~1\leq i \leq m-1 , \\
		 ~j'_k\geq \ceil*{\frac{-i\la\la_{\ell_k}}{m}}\text{ for }0\leq k\leq s,~ \sum_{k=0}^{s}j'_k=\sum_{k=0}^r\ceil*{\frac{i\la\la_k}{m}}d_k- \sum_{k=0}^{s}\ceil*{\frac{i\la\la_{\ell_k}}{m}}\Bigg\}.
		\end{align*}
    For each $0\leq k\leq t$, let $b_k\in \Z$ such that $\la\la_{\ell_k} = mb_k + 1$. Then for each $1\leq i\leq m-1$, we have $\ceil*{\frac{-i\la\la_{\ell_k}}{m}} = -ib_k$ and $\ceil*{\frac{i\la\la_{\ell_k}}{m}} = ib_k+1$ .
	Let $j_k = j'_k+ib_k$ for $0\leq k\leq t$ and $j_k = j_k'$ for $t+1\leq k\leq s$. Then we obtain the desired expression.
\end{proof}

We will prove Theorem \ref{thm_minimal_set} by induction. First, we show that Theorem \ref{thm_minimal_set} holds for $s=1$.
\begin{proposition}\label{two_places_Gamma}
	Suppose that $d_{\ell_0} = \gcd(m,\la_{\ell_0}) =1$ and $d_{\ell_1} = \gcd(m, \la_{\ell_1})=1$.
	Then for all $\la\in \Z$ with $\gcd(m,\la) = 1$,
	\begin{align*}
		\Gamma(Q_{\ell_0}, Q_{\ell_1})=\Bigg\{&(mj_0 + i\la\la_{\ell_0}, mj_1 + i\la\la_{\ell_1})\in \N^2 \mid 1\leq i \leq m-1, \, j_0\geq \ceil*{\frac{-i\la\lambda_{\ell_0}}{m}},\\
		 &~j_1\geq \ceil*{\frac{-i\la\lambda_{\ell_1}}{m}}, \quad j_0+j_1=\sum_{k=0}^{r}\ceil*{\frac{i\la\la_k}{m}}d_k-\ceil*{\frac{i\la\la_{\ell_0}}{m}}-\ceil*{\frac{i\la\la_{\ell_1}}{m}} \Bigg\}.
	\end{align*}
\end{proposition}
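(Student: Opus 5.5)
The plan is to use Homma's characterization, Lemma \ref{minimal_generating_two}. Call the displayed set $\Gamma$. It then suffices to show three things: (a) $\Gamma\subseteq H(Q_{\ell_0},Q_{\ell_1})$; (b) $\Gamma\subseteq G(Q_{\ell_0})\times G(Q_{\ell_1})$; (c) $\Gamma$ is the graph of a bijection between $G(Q_{\ell_0})$ and $G(Q_{\ell_1})$.

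For (a) I will write down the function explicitly, as the later Theorem \ref{thm_minimal_set} suggests:
$$h=\frac{\prod_{k\neq \ell_0,\ell_1} z_k^{\ceil*{\frac{i\la\la_k}{m}}}}{y^{i\la}\, z_{\ell_0}^{j_0}z_{\ell_1}^{j_1}}.$$
The divisor of $h$ is computed from \eqref{divisor1} and \eqref{divisor2}, where $z_0=1$ and the pole of $x$ contributes through the $z_k$; if one of $\ell_0,\ell_1$ equals $0$, this bookkeeping has to be done separately.
\begin{itemize}
\item At $Q_{\ell_0}$ the valuation is $-(mj_0+i\la\la_{\ell_0})$, and similarly at $Q_{\ell_1}$.
\item At a place $Q$ over $P_k$ with $k\neq\ell_0,\ell_1,0$, the valuation is $\frac{m\ceil*{i\la\la_k/m}-i\la\la_k}{\gcd(m,\la_k)}\ge 0$.
\item At places over $P_0$ (when $0\notin\{\ell_0,\ell_1\}$), the valuation is handled the same way, using $\la_0=-\sum\la_kd_k$ and the condition $j_0+j_1=\sum_k\ceil*{\frac{i\la\la_k}{m}}d_k-\ceil*{\cdot}_{\ell_0}-\ceil*{\cdot}_{\ell_1}$. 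This condition should make the valuation equal to $\frac{m\ceil*{i\la\la_0/m}-i\la\la_0}{\gcd(m,\la_0)}\ge0$.
\end{itemize}
Hence $(h)_\infty=(mj_0+i\la\la_{\ell_0})Q_{\ell_0}+(mj_1+i\la\la_{\ell_1})Q_{\ell_1}$. Since $j_k\ge\ceil*{-i\la\la_{\ell_k}/m}$ and $m\nmid i\la\la_{\ell_k}$, both entries are positive.

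For (b) I will use Proposition \ref{gap_set}, applied with $s=\ell_0$ and the same $\la$. The entry $mj_0+i\la\la_{\ell_0}$ is a gap at $Q_{\ell_0}$ exactly when
$$\ceil*{\tfrac{-i\la\la_{\ell_0}}{m}}\le j_0\le \sum_k\ceil*{\tfrac{i\la\la_k}{m}}d_k-\ceil*{\tfrac{i\la\la_{\ell_0}}{m}}-1.$$
The upper bound follows from the sum condition together with $j_1\ge\ceil*{-i\la\la_{\ell_1}/m}$:
$$j_0\le \sum_k\ceil*{\tfrac{i\la\la_k}{m}}d_k-\ceil*{\tfrac{i\la\la_{\ell_0}}{m}}-\ceil*{\tfrac{i\la\la_{\ell_1}}{m}}-\ceil*{\tfrac{-i\la\la_{\ell_1}}{m}}.$$
By Lemma \ref{ceil_and_floor}, $\ceil*{a}+\ceil*{-a}=1$ for $a\notin\Z$, which gives exactly the required bound. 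The same argument works symmetrically for $j_1$.

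For (c), the projection to the first coordinate is a bijection $\Gamma\to G(Q_{\ell_0})$. Given a gap $mj_0+i\la\la_{\ell_0}$, Proposition \ref{gap_set} determines $(i,j_0)$ uniquely (by the distinctness argument in its proof). Then $j_1$ is forced by the sum condition, and the constraint $j_1\ge\ceil*{-i\la\la_{\ell_1}/m}$ is exactly the gap upper bound for $j_0$. Symmetrically, projection to the second coordinate is a bijection onto $G(Q_{\ell_1})$. So $\Gamma=\{(a_i,b_{\tau(i)})\}$ for some permutation $\tau$, and Lemma \ref{minimal_generating_two} gives $\Gamma=\Gamma(Q_{\ell_0},Q_{\ell_1})$.

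The main obstacle I expect is step (a), specifically the valuation at places over $P_0$ and the case where $\ell_0$ or $\ell_1$ is $0$. There the degrees $d_k$ enter through the poles of the $z_k$, and I need to check that the sum condition produces precisely a nonnegative remainder. I will verify this by writing $v_Q(h)$ as $\frac{m}{\gcd(m,\la_0)}\bigl(\sum_k \ceil*{\cdot}d_k - j_0 - j_1-\dots\bigr)-\frac{i\la\la_0}{\gcd(m,\la_0)}$ and substituting.
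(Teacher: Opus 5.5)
Your proposal is correct and follows essentially the same route as the paper. The paper also exhibits the function $y^{-i\lambda}z_{\ell_0}^{-j_0}z_{\ell_1}^{-j_1}\prod_{k=2}^{r}z_{\ell_k}^{\lceil i\lambda\lambda_{\ell_k}/m\rceil}$ to get membership in $H(Q_{\ell_0},Q_{\ell_1})$, bounds $j_0,j_1$ via the sum condition and $\lceil a\rceil+\lceil -a\rceil=1$ to land in $G(Q_{\ell_0})\times G(Q_{\ell_1})$, and concludes with Lemma \ref{minimal_generating_two} by viewing the set as the graph of a bijection. Your valuation check over $P_0$ is right: it gives $\bigl(m\lceil i\lambda\lambda_0/m\rceil-i\lambda\lambda_0\bigr)/\gcd(m,\lambda_0)\geq 0$, and the case $0\in\{\ell_0,\ell_1\}$ is handled uniformly by $z_0=1$. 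That check, and your verification that both coordinate projections are bijections, fill in details the paper states only tersely.
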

\begin{proof}
	Define the set 
\begin{align*}
	\Gamma=\Bigg\{&(mj_0 + i\la\la_{\ell_0}, mj_1 + i\la\la_{\ell_1})\in \N^2 \mid 1\leq i \leq m-1, \, j_0\geq \ceil*{\frac{-i\la\lambda_{\ell_0}}{m}},\\
		 &~j_1\geq \ceil*{\frac{-i\la\lambda_{\ell_1}}{m}}, \quad j_0+j_1=\sum_{k=0}^{r}\ceil*{\frac{i\la\la_k}{m}}d_k-\ceil*{\frac{i\la\la_{\ell_0}}{m}}-\ceil*{\frac{i\la\la_{\ell_1}}{m}} \Bigg\}.
\end{align*}

Given a tuple $(mj_0 + i\la\la_{\ell_0}, mj_1 + i\la\la_{\ell_1})\in\Gamma$, by the divisors \eqref{divisor1} and \eqref{divisor2}, we obtain the following principal divisor:
\begin{align*}
	&\left(y^{-i\la}z_{\ell_0}^{-j_0}z_{\ell_1}^{-j_1}\prod_{k=2}^r z_{\ell_k}^{\ceil*{\frac{i\la\la_{\ell_k}}{m}}}\right)_F\\
 =&~-i\la\la_{\ell_0}Q_{\ell_0}-i\la\la_{\ell_1}Q_{\ell_1}+\sum_{k=2}^r\frac{-i\la\la_{\ell_k}}{\gcd(m,\la_{\ell_k})}\sum_ {Q\in \P_{F}, \, Q|P_{\ell_k}}Q\\
 &~-mj_0Q_{\ell_0}+\frac{mj_0}{\gcd(m,\la_0)}\sum_ {Q\in \P_{F}, \, Q|P_{0}}Q - mj_1Q_{\ell_1}+\frac{mj_1}{\gcd(m,\la_0)}\sum_ {Q\in \P_{F}, \, Q|P_{0}}Q\\
 &~+\sum_{k=2}^r\frac{m\ceil*{\frac{i\la\la_{\ell_k}}{m}}}{\gcd(m,\la_{\ell_k})}\sum_ {Q\in \P_{F}, \, Q|P_{\ell_k}}Q - \sum_{k=2}^r\frac{m\ceil*{\frac{i\la\la_{\ell_k}}{m}}d_{\ell_k}}{\gcd(m,\la_0)}\sum_ {Q\in \P_{F}, \, Q|P_{0}}Q\\
 =&~\sum_{k=2}^r\frac{m\ceil*{\frac{i\la\la_{\ell_k}}{m}}-i\la\la_{\ell_k}}{\gcd(m,\la_{\ell_k})}\sum_ {Q\in \P_{F}, \, Q|P_{\ell_k}}Q-(mj_0 + i\la\la_{\ell_0})Q_{\ell_0}-(mj_1 + i\la\la_{\ell_1})Q_{\ell_1}.
\end{align*}
Since $m\ceil*{\frac{i\la\la_{\ell_k}}{m}}-i\la\la_{\ell_k} \geq 0$ for all $2\leq k\leq r$, we conclude that $(mj_0+i\la\la_{\ell_0},mj_1+i\la\la_{\ell_1})\in H(Q_{\ell_0},Q_{\ell_1})$. Therefore $\Gamma\subseteq H(Q_{\ell_0},Q_{\ell_1})$.

On the other hand, since $j_0\geq \ceil*{\frac{-i\la\lambda_{\ell_0}}{m}}$, $j_1\geq \ceil*{\frac{-i\la\lambda_{\ell_1}}{m}}$, and $$j_0+j_1=\sum_{k=0}^{r}\ceil*{\frac{i\la\la_k}{m}}d_k-\ceil*{\frac{i\la\la_{\ell_0}}{m}}-\ceil*{\frac{i\la\la_{\ell_1}}{m}},$$ we have
\begin{align*}
	\ceil*{\frac{-i\la\lambda_{\ell_0}}{m}}\leq j_0&\leq \sum_{k=0}^{r}\ceil*{\frac{i\la\la_k}{m}}d_k-\ceil*{\frac{i\la\la_{\ell_0}}{m}}-\ceil*{\frac{i\la\la_{\ell_1}}{m}}-\ceil*{\frac{-i\la\lambda_{\ell_1}}{m}}\\
	& = \sum_{k=0}^{r}\ceil*{\frac{i\la\la_k}{m}}d_k-\ceil*{\frac{i\la\la_{\ell_0}}{m}}-\ceil*{\frac{i\la\la_{\ell_1}}{m}}+\floor*{\frac{i\la\lambda_{\ell_1}}{m}}\\
	& = \sum_{k=0}^{r}\ceil*{\frac{i\la\la_k}{m}}d_k-\ceil*{\frac{i\la\la_{\ell_0}}{m}} -1.
\end{align*}
Similarly, we also have $\ceil*{\frac{-i\la\lambda_{\ell_1}}{m}}\leq j_1\leq \sum_{k=0}^{r}\ceil*{\frac{i\la\la_k}{m}}d_k-\ceil*{\frac{i\la\la_{\ell_1}}{m}} -1$. This shows that $\Gamma\subseteq G(Q_{\ell_0})\times G(Q_{\ell_1})$. We conclude that $\Gamma\subseteq (G(Q_{\ell_0})\times G(Q_{\ell_1})\cap H(Q_{\ell_1},Q_{\ell_2})$.

Moreover, the set $\Gamma$ can be seen as the graph of the bijection $\theta:~ G(Q_{\ell_0})\rightarrow G(Q_{\ell_1})$ given by $\theta(mj_0+i\la\la_{\ell_0}) = mj_1 + i\la\la_{\ell_1}$,
which defines a permutation $\tau$ of the set $\{1,\cdots,g\}$. By Lemma \ref{minimal_generating_two}, we obtain that $\Gamma = \Gamma(Q_{\ell_0},Q_{\ell_1})$.
\end{proof}

To begin proving that Theorem \ref{thm_minimal_set} holds for $s\geq 2$, we first state a lemma. By Lemma \ref{lemma5} and Proposition \ref{gap_set}, the following lemma is well-defined.

\begin{lemma}\label{i_is_same}
	Suppose that $1\leq s\leq \bar{r}$ and  $d_{\ell_k} = \gcd(m,\la_{\ell_k}) = 1$ for all $0\leq k\leq s$.
	Let $\n = (n_0,\cdots,n_s)\in \Gamma(Q_{\ell_0},\cdots,Q_{\ell_s})$ and let $\la\in \Z$ with $\gcd(m,\la)=1$.
	Suppose that $$\n = (mj_0+i_0\la\la_{\ell_0},\cdots,mj_s+i_s\la\la_{\ell_s}),$$
	where $ 1 \leq i_k \leq m-1 $ and $ j_k \geq \ceil*{\frac{-i_k\la\la_{\ell_k}}{m}} $ for $0 \leq k \leq s$.
	Then $i_0 = i_k$ for all $1\leq k\leq s$.
\end{lemma}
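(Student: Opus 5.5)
Suppose, for contradiction, that $i_0\neq i_k$ for some $k\geq 1$. The plan is to show that this forces $\n$ to fail the dimension criterion of Lemma \ref{discrepancy} for the pair $P=Q_{\ell_0}$, $Q=Q_{\ell_k}$. We work with $A=\sum_{t=0}^s n_tQ_{\ell_t}$ and compute all relevant dimensions through Corollary \ref{D_dimension}. For a divisor $B=\sum_t a_tQ_{\ell_t}$, that corollary writes $\ell(B)$ as a sum over $t'\in\{0,\dots,m-1\}$ of the terms
$$\max\Big\{0,\ \sum_{u=0}^{s}\floor*{\tfrac{a_u+t'\la_{\ell_u}}{m}}+\sum_{u=s+1}^r\floor*{\tfrac{t'\la_{\ell_u}}{m}}d_{\ell_u}+1\Big\}.$$

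The first key step is to see which summand changes when one coordinate is lowered. If $a_0$ is replaced by $a_0-1$, only the summand with $t'\equiv -a_0\la_{\ell_0}^{-1}\pmod m$ can change. With $a_0=n_0=mj_0+i_0\la\la_{\ell_0}$, this index is $t'\equiv -i_0\la\pmod m$. Similarly, lowering the $k$-th coordinate from $n_k$ affects only the summand with $t'\equiv -i_k\la\pmod m$. Since $\gcd(m,\la)=1$ and $1\le i_0,i_k\le m-1$, the assumption $i_0\neq i_k$ makes these two indices distinct. Hence the two modifications act on disjoint summands. Each summand is $\max\{0,\cdot\}$, so it drops by at most $1$. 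Therefore
$$\ell(A)-\ell(A-Q_{\ell_0}-Q_{\ell_k})=\big(\ell(A)-\ell(A-Q_{\ell_0})\big)+\big(\ell(A)-\ell(A-Q_{\ell_k})\big).$$

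The second step applies Lemma \ref{discrepancy}. Since $\n\in\Gamma(Q_{\ell_0},\dots,Q_{\ell_s})\subseteq H(Q_{\ell_0},\dots,Q_{\ell_s})$, we have $\ell(A)-\ell(A-Q_{\ell_0})=1$ and $\ell(A)-\ell(A-Q_{\ell_k})=1$. The additivity above then gives $\ell(A-Q_{\ell_0}-Q_{\ell_k})=\ell(A)-2$. This contradicts the requirement $\ell(A)=\ell(A-P-Q)+1$ in Lemma \ref{discrepancy}, so $i_0=i_k$.

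The main point needing care is the disjointness claim. We must check that the summand indexed by $t'\equiv -i_0\la$ is unaffected by lowering the $k$-th coordinate, and vice versa. This holds because $\floor*{(a+t'\la_{\ell_k})/m}$ changes under $a\mapsto a-1$ only when $a+t'\la_{\ell_k}\equiv 0\pmod m$. The relevant congruence uses $\gcd(m,\la_{\ell_k})=1$. We should also confirm that the representation $n_k=mj_k+i_k\la\la_{\ell_k}$ is well defined. It is, because $n_k\in G(Q_{\ell_k})$ by Lemma \ref{lemma5}, and Proposition \ref{gap_set} supplies this form with $1\le i_k\le m-1$, where $i_k$ is determined by $n_k\bmod m$. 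Lemma \ref{sum_mod} is not needed here.
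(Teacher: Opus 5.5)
Your proof is correct and follows essentially the same route as the paper's. Both use Corollary \ref{D_dimension} to show that lowering the $Q_{\ell_0}$- and $Q_{\ell_k}$-coordinates affects only the distinct summands indexed by $t\equiv -i_0\la$ and $t\equiv -i_k\la \pmod m$, and then contradict the condition $\ell(A)=\ell(A-Q_{\ell_0}-Q_{\ell_k})+1$ of Lemma \ref{discrepancy}; your additivity of the two drops is just a cleaner packaging of the paper's pair of inequalities at $t_0$ and $t_1$.
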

\begin{proof}
	Suppose that $i_0 \neq i_k$ for some $1\leq k\leq s$. Without loss of generality, we assume that $k=1$.
	By Lemma \ref{discrepancy}, we obtain
	\begin{equation}\label{dim_1}
		\ell\left(\sum_{k=0}^{s}n_kQ_{\ell_k}\right) = \ell\left(\sum_{k=0}^{s}n_kQ_{\ell_k} - Q_{\ell_0}\right)+1,
	\end{equation}
	\begin{equation}\label{dim_2}
		\ell\left(\sum_{k=0}^{s}n_kQ_{\ell_k}\right) = \ell\left(\sum_{k=0}^{s}n_kQ_{\ell_k}- Q_{\ell_1}\right)+1,
	\end{equation}
	\begin{equation}\label{dim_3}
		\ell\left(\sum_{k=0}^{s}n_kQ_{\ell_k}\right) = \ell\left(\sum_{k=0}^{s}n_kQ_{\ell_k}-Q_{\ell_0}-Q_{\ell_1}\right)+1.
	\end{equation}
	
	Let $1\leq t_0\leq m-1$. Note that $mj_0 + i_0\lambda\lambda_{\ell_0} + t_0\lambda_{\ell_0}\equiv 0\pmod m$ if and only if $t_0\equiv -i_0\la\pmod m$.
	We have $\floor*{\frac{n_0+t_0\la_{\ell_0}}{m}} = \floor*{\frac{n_0-1+t_0\la_{\ell_0}}{m}}+1$ if and only if $t_0\equiv -i_0\la\pmod m$.
	Then by \eqref{dim_1} and Corollary \ref{D_dimension}, we have
	\begin{align}\label{dim_4}
		\sum_{k=0}^{s}\floor*{\frac{n_k+t_0\la_{\ell_k}}{m}}+\sum_{k=s+1}^{r}\floor*{\frac{t_0\la_{\ell_k}}{m}} d_{\ell_k}\geq 0.
	\end{align}
	Let $1\leq t_1\leq m-1$. Note that $mj_1 + i_1\lambda\lambda_{\ell_1} + t_1\lambda_{\ell_1}\equiv 0\pmod m$ if and only if $t_1\equiv -i_1\la\pmod m$.
	We have $\floor*{\frac{n_1+t_1\la_{\ell_1}}{m}} = \floor*{\frac{n_1-1+t_1\la_{\ell_1}}{m}}+1$ if and only if $t_1\equiv -i_1\la\pmod m$.
	Moreover, we have $t_1\neq t_0$ since $i_1\neq i_0$. 
	Then by \eqref{dim_3}, \eqref{dim_4} and Corollary \ref{D_dimension}, we have
	\begin{align*}
		\sum_{k=0}^{s}\floor*{\frac{n_k+t_1\la_{\ell_k}}{m}}+\sum_{k=s+1}^{r}\floor*{\frac{t_1\la_{\ell_k}}{m}}d_{\ell_k}\leq -1
	\end{align*}
	On the other hand, by \eqref{dim_2} and Corollary \ref{D_dimension}, we have
	\begin{align*}
		\sum_{k=0}^{s}\floor*{\frac{n_k+t_1\la_{\ell_k}}{m}}+\sum_{k=s+1}^{r}\floor*{\frac{t_1\la_{\ell_k}}{m}}d_{\ell_k}\geq 0,
	\end{align*}
	which is a contradiction. Thus $i_0 = i_k$ for all $1\leq k\leq s$.
\end{proof}

\begin{definition}
	Let $1\leq s\leq \bar{r}$ and $\la\in \Z$ with $\gcd(m,\la) = 1$. Suppose that $d_{\ell_k} = \gcd(m,\la_{\ell_k}) = 1$ for all $0\leq k\leq s$.
	Define the set 
	\begin{align*}
		\Gamma_\la(Q_{\ell_0},\cdots, Q_{\ell_s}) := \Bigg\{
			\mathbf{u}_{\j,i\la} = (mj_0+i\la\la_{\ell_0},\cdots, mj_s+i\la\la_{\ell_s})\in \mathbb{N}^{s+1} ~\Big|
		~1\leq i \leq m-1 , \\
		 ~j_k\geq \ceil*{\frac{-i\la\la_{\ell_k}}{m}}\text{ for }0\leq k\leq s,~ \sum_{k=1}^{s}j_k=\sum_{k=0}^r\ceil*{\frac{i\la\la_k}{m}}d_k- \sum_{k=0}^{s}\ceil*{\frac{i\la\la_{\ell_k}}{m}}\Bigg\}.
		\end{align*}
\end{definition}
Our goal is to show that $\Gamma_\la(Q_{\ell_0},\cdots, Q_{\ell_s}) = \Gamma(Q_{\ell_0},\cdots, Q_{\ell_s})$ for all $\la\in \Z$ with $\gcd(m,\la) = 1$. We first show that $\Gamma_\lambda(Q_{\ell_0},\cdots,Q_{\ell_s})\subseteq\Gamma(Q_{\ell_0},\cdots, Q_{\ell_s})$.

\begin{proposition}\label{S_subseteq_Gamma}
	Let $1\leq s\leq \bar{r}$ and $\la\in \Z$ with $\gcd(m,\la) = 1$. Suppose that $d_{\ell_k} = \gcd(m,\la_{\ell_k}) = 1$ for all $0\leq k\leq s$.
	Then
	$$\Gamma_\lambda(Q_{\ell_0},\cdots, Q_{\ell_s})\subseteq\Gamma(Q_{\ell_0},\cdots, Q_{\ell_s}).$$
	Moreover, for each $\mathbf{u}_{\j,i\la} = (mj_0+i\la\la_{\ell_0},\cdots, mj_s+i\la\la_{\ell_s})\in \Gamma_\lambda(Q_{\ell_0},\cdots, Q_{\ell_s})$, 
	$$\left( \frac{\prod_{k=s+1}^r z_{\ell_k}^{\ceil*{\frac{i\la\la_{\ell_k}}{m}}}}{y^{i\la} \prod_{k=0}^{s} z_{\ell_k}^{j_k} }\right)_{\infty} = \sum_{k=0}^{s}(mj_k+i\la\la_{\ell_k})Q_{\ell_k}.$$
\end{proposition}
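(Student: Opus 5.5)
The proof has two parts: first show that each $\mathbf{u}_{\j,i\la}\in\Gamma_\la(Q_{\ell_0},\cdots,Q_{\ell_s})$ lies in $H(Q_{\ell_0},\cdots,Q_{\ell_s})$ by writing down an explicit function with pole divisor $\sum_k(mj_k+i\la\la_{\ell_k})Q_{\ell_k}$, then verify the dimension criterion of Lemma \ref{discrepancy}. Throughout I read the defining constraint of $\Gamma_\la$ as $\sum_{k=0}^{s}j_k=\sum_{k=0}^r\ceil*{\frac{i\la\la_k}{m}}d_k-\sum_{k=0}^{s}\ceil*{\frac{i\la\la_{\ell_k}}{m}}$, i.e. with the sum starting at $k=0$, as in Theorem \ref{thm_minimal_set}. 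Note that $\mathbf{u}_{\j,i\la}\in\N^{s+1}$ is part of the definition; the condition $j_k\geq\ceil*{\frac{-i\la\la_{\ell_k}}{m}}$ only guarantees $mj_k+i\la\la_{\ell_k}\geq 0$.

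\textbf{Step 1 (the function).} Set $h=\prod_{k=s+1}^r z_{\ell_k}^{\ceil*{i\la\la_{\ell_k}/m}}\big/\big(y^{i\la}\prod_{k=0}^s z_{\ell_k}^{j_k}\big)$. I would compute $(h)_F$ from \eqref{divisor1} and \eqref{divisor2}, exactly as in the proof of Proposition \ref{two_places_Gamma}.
\begin{itemize}
\item At each $Q_{\ell_k}$ with $k\leq s$ the valuation is $-(mj_k+i\la\la_{\ell_k})$.
\item At places over $P_{\ell_k}$ with $k>s$ the coefficient is $\big(m\ceil*{i\la\la_{\ell_k}/m}-i\la\la_{\ell_k}\big)/\gcd(m,\la_{\ell_k})\geq 0$.
\item The contribution at $P_0$ (when $0\notin\{\ell_0,\dots,\ell_s\}$) is the weighted total $\frac{m}{\gcd(m,\la_0)}\big(\sum_{k\leq s}j_k-\sum_{k>s}\ceil*{\cdot}d_{\ell_k}\big)$ together with the $y$-part.
\end{itemize}
The constraint on $\sum_k j_k$ is precisely what makes the $P_0$ coefficient collapse to the nonnegative expression $\big(m\ceil*{i\la\la_{0}/m}-i\la\la_0\big)/\gcd(m,\la_0)$. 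If some $\ell_k=0$ with $k\le s$, the same bookkeeping (using $z_0=1$ and $\sum_k\la_kd_k=0$) gives valuation $-(mj_k+i\la\la_0)$ at $Q_0$. Hence $(h)_\infty$ is as claimed, and $\mathbf{u}_{\j,i\la}\in H(Q_{\ell_0},\cdots,Q_{\ell_s})$.

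\textbf{Step 2 (minimality via Lemma \ref{discrepancy}).} Let $A=\sum_k n_kQ_{\ell_k}$ with $n_k=mj_k+i\la\la_{\ell_k}$. Fix two places $Q_{\ell_a},Q_{\ell_b}$. Lemma \ref{discrepancy} requires that $\ell(A-Q_{\ell_a})$, $\ell(A-Q_{\ell_b})$ and $\ell(A-Q_{\ell_a}-Q_{\ell_b})$ all equal $\ell(A)-1$. Since $\ell$ is monotone and drops by at most one per removed rational place, it suffices to prove $\ell(A-Q_{\ell_a}-Q_{\ell_b})=\ell(A)-1$.
\begin{itemize}
\item I would expand both sides with Corollary \ref{D_dimension} as sums over $t\in\{0,\dots,m-1\}$.
\item Lowering the $a$-th coordinate by one changes $\floor*{(n_a+t\la_{\ell_a})/m}$ only when $m\mid n_a+t\la_{\ell_a}$, i.e. $t\equiv -i\la \pmod m$.
\item The same residue class $t_*\equiv -i\la$ arises for $b$, because both coordinates share the same $i$. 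So all summands with $t\neq t_*$ are unchanged.
\item For $t=t_*$, Lemma \ref{sum_mod} lets me replace $t_*$ by $-i\la$ itself. The inner sum then becomes $\sum_{k\le s}j_k-\sum_{k>s}\ceil*{i\la\la_{\ell_k}/m}d_{\ell_k}$, which is $0$ by the defining constraint, using $d_{\ell_k}=1$ for $k\le s$.
\item Thus the $t_*$ summand is $\max\{0,1\}=1$ for $A$ and $\max\{0,-1\}=0$ for $A-Q_{\ell_a}-Q_{\ell_b}$, since the inner sum drops to $-2$.
\end{itemize}
Hence $\ell(A)-\ell(A-Q_{\ell_a}-Q_{\ell_b})=1$, and Lemma \ref{discrepancy} gives $\mathbf{u}_{\j,i\la}\in\Gamma(Q_{\ell_0},\cdots,Q_{\ell_s})$.

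The main obstacle is the divisor bookkeeping at $P_0$ in Step 1 and the reduction at $t_*$ in Step 2. In both places the constraint on $\sum j_k$ must be matched exactly with the ceiling terms, taking care with the degrees $d_{\ell_k}$ and with the case where $0$ is among $\ell_0,\dots,\ell_s$.
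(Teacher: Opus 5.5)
Your proof is correct. Step 1 is the same explicit-function computation as in the paper, but Step 2 takes a genuinely different route.

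\textbf{The paper's route.} The paper proves minimality by induction on $s$. The base case $s=1$ is Proposition \ref{two_places_Gamma}, which rests on Homma's permutation lemma and the gap sets of Proposition \ref{gap_set}. In the inductive step, a hypothetical $\mathbf{u}\prec\mathbf{u}_{\j,i\la}$ with the same first coordinate is multiplied by a power of $z_{\ell_s}/z_{\ell_{s-1}}$. This produces an element of $H(Q_{\ell_0},\cdots,Q_{\ell_{s-1}})$ strictly below an element of $\Gamma_\la(Q_{\ell_0},\cdots,Q_{\ell_{s-1}})$, contradicting the induction hypothesis via Proposition \ref{minimal_for_all}.

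\textbf{Your route.} You check the criterion of Lemma \ref{discrepancy} directly with Corollary \ref{D_dimension}, a tool the paper uses only for Lemma \ref{i_is_same}. Every coordinate jumps at the same residue $t_*\equiv -i\la \pmod m$. The constraint on $\sum_{k=0}^s j_k$ then puts the inner sum $S(t_*)$ exactly at $0$; your reading of that sum as starting at $k=0$ is the intended one.

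\textbf{Comparison.} Your argument is shorter and uniform in $s$. It needs neither the two-point base case nor Homma's lemma, and it explains why that constraint is the right one. It also gives more: for vectors $(mj_k+i\la\la_{\ell_k})_k$ with a common $i$, the same computation shows $\n\in H$ iff $S(t_*)\geq 0$, and $\n\in\Gamma$ iff $S(t_*)=0$. Combined with Lemma \ref{lemma5}, Proposition \ref{gap_set} and Lemma \ref{i_is_same}, this yields the reverse inclusion of Proposition \ref{Gamma_sub_S} without induction. The paper's inductive step, by contrast, uses no dimension counts at all, only explicit functions and the order structure of $\Gamma$ in fewer points.

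\textbf{Two minor points.} First, in the $P_0$ bookkeeping of Step 1, the sum $\sum_{k>s}\lceil\cdot\rceil d_{\ell_k}$ in your weighted total must omit the index with $\ell_k=0$, since $z_0=1$. Only then does the coefficient collapse to $\bigl(m\lceil i\la\la_0/m\rceil-i\la\la_0\bigr)/\gcd(m,\la_0)$ rather than to $-i\la\la_0/\gcd(m,\la_0)$. Second, monotonicity alone does not reduce Lemma \ref{discrepancy} to the single equality $\ell(A-Q_{\ell_a}-Q_{\ell_b})=\ell(A)-1$. You also need $\ell(A-Q_{\ell_a})<\ell(A)$. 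This follows from Step 1, or directly from your $t_*$ computation, where the inner sum drops to $-1$.
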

\begin{proof}
 	We will prove the proposition by induction on $s$.
	By Proposition \ref{two_places_Gamma}, we have $\Gamma_\la(Q_{\ell_0},Q_{\ell_1})=\Gamma(Q_{\ell_0},Q_{\ell_1})$,
	which establishes the case where $ s=1 $. Assume that $\Gamma_\la(Q_{\ell_0},\cdots,Q_{\ell_{k}}) \subseteq \Gamma(Q_{\ell_0},\cdots,Q_{\ell_{k}})$ holds for all $ 1 \leq k \leq s-1 $, where $s\geq 2$.
	Let $\mathbf{u}_{\j,i\la} = (mj_0+i\la\la_{\ell_0},\cdots, mj_s+i\la\la_{\ell_s})\in \Gamma_\lambda(Q_{\ell_0},\cdots, Q_{\ell_s})$. Then by divisors \eqref{divisor1} and \eqref{divisor2}, we obtain
	\begin{align*}
		&\left( \left(y^{i\la} \prod_{k=0}^{s} z_{\ell_k}^{j_k}\right)^{-1}\prod_{k=s+1}^r z_{\ell_k}^{\ceil*{\frac{i\la\la_{\ell_k}}{m}}}\right)_F\\
		=&~-\sum_{k=0}^{s}i\la\la_{\ell_k}Q_{\ell_k}+\sum_{k=s+1}^r\frac{-i\la\la_{\ell_k}}{\gcd(m,\la_{\ell_k})}\sum_ {Q\in \P_{F}, \, Q|P_{\ell_k}}Q\\
 		&~-\sum_{k=0}^{s}mj_kQ_{\ell_k}+\sum_{k=0}^{s}\frac{mj_k}{\gcd(m,\la_0)}\sum_ {Q\in \P_{F}, \, Q|P_{0}}Q\\
 		&~+\sum_{k=s+1}^r\frac{m\ceil*{\frac{i\la\la_{\ell_k}}{m}}}{\gcd(m,\la_{\ell_k})}\sum_ {Q\in \P_{F}, \, Q|P_{\ell_k}}Q - \sum_{k=s+1}^r\frac{m\ceil*{\frac{i\la\la_{\ell_k}}{m}}d_{\ell_k}}{\gcd(m,\la_0)}\sum_ {Q\in \P_{F}, \, Q|P_{0}}Q\\
 		=&~\sum_{k=s+1}^r\frac{m\ceil*{\frac{i\la\la_{\ell_k}}{m}}-i\la\la_{\ell_k}}{\gcd(m,\la_{\ell_k})}\sum_ {Q\in \P_{F}, \, Q|P_{\ell_k}}Q-\sum_{k=0}^{s}(mj_k+i\la\la_{\ell_k})Q_{\ell_k}.
	\end{align*}
	Since $m\ceil*{\frac{i\la\la_{\ell_k}}{m}}-i\la\la_{\ell_k}\geq 0$ for $s+1\leq k\leq r$, we have $\mathbf{u}_{\mathbf{j},i\la} \in H(Q_{\ell_0},\cdots,Q_{\ell_s})$.

	In order to show that $\mathbf{u}_{\mathbf{j},i\la} \in \Gamma(Q_{\ell_0},\cdots,Q_{\ell_s})$, it suffices to prove that $\mathbf{u}_{\mathbf{j},i\la}$ is minimal in $\{\mathbf{n} = (n_0,\cdots,n_s)\in H(Q_{\ell_0},\cdots,Q_{\ell_s})~|~ n_0=mj_0+i\la\la_{\ell_0}\}$.
	Suppose that $\mathbf{u}_{\mathbf{j},i\la}$ is not minimal in $\{\mathbf{n} = (n_0,\cdots,n_s)\in H(Q_{\ell_0},\cdots,Q_{\ell_s})~|~ n_0=mj_0+i\la\la_{\ell_0}\}$.
	Then there exists $ \mathbf{u}=(u_0,\cdots,u_s)\in H(Q_{\ell_0},\cdots,Q_{\ell_s})$ with $u_0=mj_0+i\la\la_{\ell_0}$, and $\mathbf{u}\prec\mathbf{u}_{\mathbf{j},i\la}$.
	Let $h \in F$ be such that $(h)_{\infty} = \sum_{k=0}^{s}u_kQ_{\ell_k}$. Note that $\mathbf{u} \prec \mathbf{u}_{\mathbf{j},i\la}$ gives $u_k < mj_k+i\la\la_{\ell_k}$ for some $1\leq k \leq s$.
	Without loss of generality, we may assume that $ u_{1}< mj_{1}+i\la\la_{\ell_{1}}$.

	We take $v_{s-1} = mj_{s}+m\ceil*{\frac{i\la\la_{\ell_s}}{m}} + u_{s-1}$ and $v_k = u_k$ for $0\leq k\leq s-2$. Let $\mathbf{v} = (v_1,\cdots,v_s)$.
	Since $-u_{s}+m\left(j_{s}+\ceil*{\frac{i\la\la_{\ell_s}}{m}}\right) \geq m\ceil*{\frac{i\la\la_{\ell_s}}{m}}-i\la\la_{\ell_s}>0$, we have
	$$\left( h z_{\ell_s}^{j_{s}+\ceil*{\frac{i\la\la_{\ell_s}}{m}}} z_{\ell_{s-1}}^{-j_{s}-\ceil*{\frac{i\la\la_{\ell_s}}{m}}} \right)_{\infty} = \sum_{k=0}^{s-1} v_k Q_{\ell_k},$$
	Hence $\mathbf{v} = (v_0,\cdots,v_{s-2},v_{s-1}) = (u_0,\cdots,u_{s-2},v_{s-1})\in H(Q_{\ell_0},\cdots,Q_{\ell_{s-2}},Q_{\ell_{s-1}}).$

	Let $w_{s-1} = m\left(j_{s-1}+j_{s}+\ceil*{\frac{i\la\la_{\ell_s}}{m}}\right)+i\la\la_{\ell_{s-1}}$ and 
	$$\mathbf{w} =\left(mj_0+i\la\la_{\ell_0},\cdots,mj_{s-2}+i\la\la_{\ell_{s-2}},w_{s-1}\right).$$
    Then we obtain $\mathbf{w}\in \Gamma_\la(Q_{\ell_0},\cdots,Q_{\ell_{s-1}}).$
	It follows from the induction hypothesis that $\Gamma_\la(Q_{\ell_0},\cdots,Q_{\ell_{s-1}})\subseteq\Gamma(Q_{\ell_0},\cdots,Q_{\ell_{s-1}})$.
    Thus we have $\mathbf{w} \in \Gamma(Q_{\ell_0},\cdots,Q_{\ell_{s-1}})$.
    Then by Proposition \ref{minimal_for_all}, we get that $\mathbf{w}$ is minimal in the set
	$\{\mathbf{p} = (p_0,\cdots,p_{s-1})\in H(Q_{\ell_0},\cdots,Q_{\ell_{s-1}})~|~p_0=mj_0+i\la\la_{\ell_0}\}$.
	Now we have
	\begin{align*}
	\mathbf{v} \in \{\mathbf{n} = (n_0,\cdots,n_s)\in H(Q_{\ell_0},\cdots,Q_{\ell_{s-1}})~|~n_0=mj_0+i\la\la_{\ell_0}\} \text{ and } \mathbf{v} \prec \mathbf{w},
	\end{align*}
	which is a contradiction to the minimality of $\mathbf{w}$. It follows that $ \mathbf{u}_{\mathbf{j},i\la} $ is minimal in $\{\mathbf{n} = (n_0,\cdots,n_s)\in H(Q_{\ell_0},\cdots,Q_{\ell_s})~|~ n_0=mj_0+i\la\la_{\ell_0}\}$.
	Therefore, we have $u_{\mathbf{j},i\la} = (mj_0+i\la\la_{\ell_0},\cdots, mj_s+i\la\la_{\ell_s})\in \Gamma(Q_{\ell_0},\cdots,Q_{\ell_s})$ and 
	$$\left( \frac{\prod_{k=s+1}^r z_{\ell_k}^{\ceil*{\frac{i\la\la_{\ell_k}}{m}}}}{y^{i\la} \prod_{k=0}^{s} z_{\ell_k}^{j_k} }\right)_{\infty} = \sum_{k=0}^{s}(mj_k+i\la\la_{\ell_k})Q_{\ell_k}.$$
\end{proof}

For $s\geq 2$, the above proposition shows that $\Gamma_\lambda(Q_{\ell_0},\cdots,Q_{\ell_s})\subseteq\Gamma(Q_{\ell_0},\cdots, Q_{\ell_s})$. Next we show that $\Gamma(Q_{\ell_0},\cdots,Q_{\ell_s})\subseteq\Gamma_\lambda(Q_{\ell_0},\cdots, Q_{\ell_s})$.
\begin{proposition}\label{Gamma_sub_S}
	Let $1\leq s\leq \bar{r}$ and $\la\in \Z$ with $\gcd(m,\la) = 1$. Suppose that $d_{\ell_k} = \gcd(m,\la_{\ell_k}) = 1$ for all $0\leq k\leq s$.
	Then 
	$$\Gamma(Q_{\ell_0},\cdots, Q_{\ell_s})\subseteq \Gamma_\lambda(Q_{\ell_0},\cdots, Q_{\ell_s}).$$
\end{proposition}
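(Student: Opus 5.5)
Let $\n=(n_0,\cdots,n_s)\in\Gamma(Q_{\ell_0},\cdots,Q_{\ell_s})$. The first step is to put $\n$ into the shape used in the definition of $\Gamma_\la$. By Lemma \ref{lemma5}, each $n_k$ lies in $G(Q_{\ell_k})$. By Proposition \ref{gap_set} applied to $Q_{\ell_k}$ with the same $\la$, we can then write $n_k=mj_k+i_k\la\la_{\ell_k}$, where $1\leq i_k\leq m-1$ and $j_k\geq\ceil*{\frac{-i_k\la\la_{\ell_k}}{m}}$. Lemma \ref{i_is_same} now gives $i_0=\cdots=i_s=:i$. So the only thing left is the equality
$$\sum_{k=0}^{s}j_k=\sum_{k=0}^r\ceil*{\frac{i\la\la_k}{m}}d_k-\sum_{k=0}^{s}\ceil*{\frac{i\la\la_{\ell_k}}{m}}=:N_i.$$
Write $\Sigma:=\sum_{k=0}^s j_k$. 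I would prove $\Sigma\geq N_i$ and $\Sigma\leq N_i$ separately.

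\emph{Lower bound $\Sigma\geq N_i$.} Since $\n\in H(Q_{\ell_0},\cdots,Q_{\ell_s})$, Lemma \ref{discrepancy} gives $\ell(A)=\ell(A-Q_{\ell_0})+1$ for $A=\sum_k n_kQ_{\ell_k}$. As in the proof of Lemma \ref{i_is_same}, Corollary \ref{D_dimension} shows that the only summand $t$ that can change is $t\equiv -i\la\pmod m$. For that summand the dimension must actually drop, so
$$\sum_{k=0}^{s}\floor*{\frac{n_k+t\la_{\ell_k}}{m}}+\sum_{k=s+1}^r\floor*{\frac{t\la_{\ell_k}}{m}}d_{\ell_k}\geq 0.$$
By Lemma \ref{sum_mod} we may replace $t$ by $-i\la$. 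Then $\floor*{\frac{mj_k+i\la\la_{\ell_k}-i\la\la_{\ell_k}}{m}}=j_k$, and the remaining terms are $\floor*{\frac{-i\la\la_{\ell_k}}{m}}=-\ceil*{\frac{i\la\la_{\ell_k}}{m}}$. The inequality therefore reads $\Sigma-\sum_{k=s+1}^r\ceil*{\frac{i\la\la_{\ell_k}}{m}}d_{\ell_k}\geq 0$. Since $d_{\ell_k}=1$ for $k\leq s$, this is exactly $\Sigma\geq N_i$.

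\emph{Upper bound $\Sigma\leq N_i$.} Suppose instead that $\Sigma>N_i$. Since $N_i\geq\sum_{k=0}^s\ceil*{\frac{-i\la\la_{\ell_k}}{m}}$, we can decrease some of the $j_k$ with $k\geq1$, keeping $j_0$ fixed, to obtain $\j'$ with $j'_k\leq j_k$, $j'_0=j_0$, $j'_k\geq\ceil*{\frac{-i\la\la_{\ell_k}}{m}}$ and $\sum_k j'_k=N_i$. This works as long as $N_i-j_0\geq\sum_{k\geq1}\ceil*{\frac{-i\la\la_{\ell_k}}{m}}$, that is, $j_0\leq N_i-\sum_{k\ge1}\ceil*{\frac{-i\la\la_{\ell_k}}{m}}$. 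To get this bound on $j_0$, I would use the fact that $n_0$ is a gap: the range in Proposition \ref{gap_set} gives $j_0\leq \sum_k\ceil*{\frac{i\la\la_k}{m}}d_k-\ceil*{\frac{i\la\la_{\ell_0}}{m}}-1$. Combined with $\ceil*{-a}+\ceil*{a}\leq1$, this seems to yield the bound only when $s=1$. This is the delicate step. If the direct bound fails, the fallback is to compare instead with the projection to $s$ places: a contradiction can be obtained via Lemma \ref{discrepancy}, using the two-place condition $\ell(A-Q_{\ell_0}-Q_{\ell_k})+1=\ell(A)$ for the coordinate $k$ where the surplus sits.

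Once $\j'$ is available, $\mathbf{u}_{\j',i\la}\in\Gamma_\la\subseteq H(Q_{\ell_0},\cdots,Q_{\ell_s})$ by Proposition \ref{S_subseteq_Gamma}. It has the same $0$-th coordinate as $\n$ and satisfies $\mathbf{u}_{\j',i\la}\prec\n$ with $\mathbf{u}_{\j',i\la}\neq\n$. This contradicts the minimality of $\n$, which holds for index $0$ by Proposition \ref{minimal_for_all}. Hence $\Sigma=N_i$ and $\n\in\Gamma_\la(Q_{\ell_0},\cdots,Q_{\ell_s})$. I expect the main obstacle to be guaranteeing that such a $\j'$ exists, i.e.\ the bound on $j_0$, or equivalently carrying out the alternative Riemann–Roch argument through Corollary \ref{D_dimension} to rule out $\Sigma>N_i$ directly. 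In that alternative argument, for the summand $t\equiv-i\la$ one would show that $\Sigma>N_i$ forces $\ell(A-Q_{\ell_0}-Q_{\ell_k})=\ell(A-Q_{\ell_k})$ for a suitable $k$, against Lemma \ref{discrepancy}.
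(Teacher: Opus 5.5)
There is a genuine gap: the upper bound $\Sigma\leq N_i$ is never established. Your reduction to a common index $i$ (via Lemma \ref{lemma5}, Proposition \ref{gap_set} and Lemma \ref{i_is_same}) is correct, and so is your lower bound $\Sigma\geq N_i$.

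Your main route needs $j_0\leq N_i+\sum_{k=1}^{s}\floor*{\frac{i\lambda\lambda_{\ell_k}}{m}}$. But $n_0\in G(Q_{\ell_0})$ only gives $j_0\leq N_i+\sum_{k=1}^{s}\floor*{\frac{i\lambda\lambda_{\ell_k}}{m}}+s-1$, since the quotients $i\lambda\lambda_{\ell_k}/m$ are not integers. For $s\geq 2$ the gap range at $Q_{\ell_0}$ really contains these extra values of $j_0$. So the bound cannot come from one-point information; you need an input that uses $\mathbf{n}\in\Gamma$ at two places simultaneously.

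Your fallback, as written, is reversed. For $\mathbf{n}\in\Gamma$, Lemma \ref{discrepancy} asserts the equality $\ell(A-Q_{\ell_0}-Q_{\ell_k})=\ell(A-Q_{\ell_k})$. What $\Sigma>N_i$ forces is $\ell(A-Q_{\ell_0}-Q_{\ell_k})=\ell(A-Q_{\ell_k})-1$. Also, the surplus does not sit at any particular coordinate.

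Stated correctly, the fallback closes the gap and makes the $\mathbf{j}'$ construction unnecessary. Let $t^{*}\in\{0,\dots,m-1\}$ with $t^{*}\equiv -i\lambda\pmod m$, and put $E:=\Sigma-N_i$. Your lower-bound computation shows that $E$ is exactly the $t^{*}$-th inner expression in Corollary \ref{D_dimension} for $A$. Every $n_k\equiv i\lambda\lambda_{\ell_k}\pmod m$ with the same $i$. Hence lowering the coefficient of $Q_{\ell_0}$, or of $Q_{\ell_1}$, by one changes only the $t^{*}$-th summand, and each time lowers its inner expression by exactly $1$. Therefore
\[
\ell(A)-\ell(A-Q_{\ell_0}-Q_{\ell_1})=\max\{0,E+1\}-\max\{0,E-1\},
\]
which equals $2$, $1$ or $0$ according as $E\geq 1$, $E=0$ or $E\leq -1$. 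Lemma \ref{discrepancy} says this difference is $1$, so $E=0$, which gives both inequalities at once.

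This is a short, non-inductive proof: essentially the computation in Lemma \ref{i_is_same} pushed one step further. The paper instead argues by induction on $s$. It moves the pole at $Q_{\ell_s}$ onto $Q_{\ell_1}$ using $(z_{\ell_s}/z_{\ell_1})^{a_s}$ with $a_1$ maximal, and extracts an element of $\tilde{\Gamma}(Q_{\ell_0},\dots,Q_{\ell_{s-1}})$ via Theorem \ref{lub_semi_group}. It then applies the induction hypothesis and Proposition \ref{S_subseteq_Gamma} to build $\mathbf{v}\in H$ with $\mathbf{v}\preceq\mathbf{n}$, and concludes $\mathbf{n}=\mathbf{v}$ by minimality. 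Your corrected argument avoids that machinery entirely.
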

\begin{proof}
    We will prove the proposition by induction on $s$.
	By Proposition \ref{two_places_Gamma}, we have $\Gamma_\la(Q_{\ell_0},Q_{\ell_1})=\Gamma(Q_{\ell_0},Q_{\ell_1})$,
	which settles the case where $ s=1 $. Assume that $\Gamma(Q_{\ell_0},\cdots,Q_{\ell_{k}}) \subseteq \Gamma_\lambda(Q_{\ell_0},\cdots,Q_{\ell_{k}})$ holds for all $ 1 \leq k \leq s-1 $, where $s\geq 2$. Suppose that $ \n=(n_0,\cdots,n_s) \in \Gamma(Q_{\ell_0},\cdots, Q_{\ell_s})$. Then there exists $ h \in F $ such that $ (h)_{\infty} = n_0Q_{\ell_0}+\cdots+n_sQ_{\ell_s} $. 
	  By Lemmas \ref{lemma5} and \ref{i_is_same}, we get 
	$$\mathbf{n} = (n_0,n_1,\cdots,n_s) = (mj_0+i\lambda\la_{\ell_0},mj_1+i\lambda\la_{\ell_1},\cdots,mj_{s}+i\lambda\la_{\ell_{s}}),$$
	where $ 1 \leq i \leq m-1 $ and $ \ceil*{\frac{-i\lambda\la_{\ell_k}}{m}}\leq j_k\leq \sum_{t=0}^{r}\ceil*{\frac{i \lambda\la_t}{m}}d_t-\ceil*{\frac{i \lambda\la_{\ell_k}}{m}}-1 $ for all $0\leq k \leq s$.
	
	Let $a_k = j_k+\ceil*{\frac{i\lambda\la_{\ell_k}}{m}}$ for $1\leq k\leq s$.
	Without loss of generality, we assume that $a_1 = \max\{a_1,\cdots,a_s\}$.
	We have
	$$\left(h z_{\ell_s}^{j_s+\ceil*{\frac{i\lambda\la_{\ell_s}}{m}}}z_{\ell_1}^{-j_s-\ceil*{\frac{i\lambda\la_{\ell_s}}{m}}}\right)_{\infty}= n_0Q_{\ell_0} + \left(n_1+m\left(j_s+\ceil*{\frac{i\lambda\la_{\ell_s}}{m}}\right)\right)Q_{\ell_1} +\sum_{k=2}^{s-1}n_k Q_{\ell_k}.$$
	Thus 
	$\left(n_0,n_1+m\left(j_s+\ceil*{\frac{i\lambda\la_{\ell_s}}{m}}\right),n_2,\cdots,n_{s-1}\right)\in H(Q_{\ell_0},\cdots, Q_{\ell_{s-1}})$.
	By Theorem \ref{lub_semi_group}, there exists $ \mathbf{u}=(u_0,\cdots,u_{s-1})\in \tilde{\Gamma}(Q_{\ell_0},\cdots, Q_{\ell_{s-1}}) $ such that
	$$\mathbf{u}\preceq \left(n_0,n_1+m\left(j_s+\ceil*{\frac{i\lambda\la_{\ell_s}}{m}}\right),n_2,\cdots,n_{s-1}\right),$$
	and $u_0 = n_0 = mj_0+i\lambda\la_{\ell_0}$.
	If $ u_1 \leq n_1 $, then $ (u_0,u_1,\cdots,u_{s-1},0) \prec \mathbf{n} $. This yields a contradiction as $\mathbf{n}$ is minimal in  $\{\mathbf{w} = (w_0,\cdots,w_{s})\in H(Q_{\ell_0},\cdots, Q_{\ell_s})\mid w_0=mj_0+i\lambda\la_{\ell_0}\}$.
	Thus, we have $ u_1 > n_1 >0 $. Let $M = \{k_0,\cdots, k_t\}= \{0\leq k\leq s-1 ~|~u_k>0\}$.
	We have $t\geq 1$ since $u_0=n_0>0$ and $u_1>n_1>0$.
	Then $\pi_M(\mathbf{u})\in \Gamma(Q_{\ell_{k_0}},\cdots, Q_{\ell_{k_{t}}})$.
	By the induction hypothesis, we obtain that
	$$\pi_M(\mathbf{u}) = \left(mT_{k_0}+i\lambda\la_{\ell_{k_0}},\cdots, m T_{k_t}+i\lambda\la_{\ell_{k_t}}\right)\in \Gamma_\lambda(Q_{\ell_{k_0}},\cdots, Q_{\ell_{k_{t}}}),$$
	where $ 1 \leq i \leq m-1 $, $ T_{k_j}\geq \ceil*{\frac{-i\lambda\la_{\ell_{k_j}}}{m}} $ for $ 0\leq j \leq t$, and $ \sum_{j=0}^t T_{k_j} =\sum_{j=0}^r\ceil*{\frac{i\lambda\la_j}{m}}d_j-\sum_{j=0}^t \ceil*{\frac{i\lambda\la_{\ell_{k_j}}}{m}} $.
	Note that $k_0 = 0$ as $u_0 = n_0>0$ and $ k_1=1 $ as $ u_1 > n_1 >0 $.
	Since $mT_1+ i\lambda\la_{\ell_1} = u_1 > n_1  = mj_1+i\lambda\la_{\ell_1}$, then $T_1\geq j_1+1$.
	Since $j_1+\ceil*{\frac{i\lambda\la_{\ell_1}}{m}} = a_1\geq a_s = j_s+\ceil*{\frac{i\lambda\la_{\ell_s}}{m}} $, we get
	\begin{align*}
		T_1 - j_s - \ceil*{\frac{i\lambda\la_{\ell_s}}{m}}&\geq j_1+1-j_s- \ceil*{\frac{i\lambda\la_{\ell_s}}{m}}\geq 1-\ceil*{\frac{i\lambda\la_{\ell_1}}{m}}=\ceil*{\frac{-i\lambda\la_{\ell_1}}{m}}.
	\end{align*}
	Set
	$$\v = (v_0,\cdots,v_s) = \left(u_0,u_1-m\left(j_s + \ceil*{\frac{i\lambda\la_{\ell_s}}{m}}\right),u_2,\cdots,u_{s-1},mj_s+i\lambda\la_{\ell_s}\right).$$
	We have $\v\preceq \mathbf{n} $.
	Let $L = \{k_0,\cdots,k_t,s\}$.
	Then $\pi_L(\v)$ is formed by some of the non-zero coordinates of $\v$. We verify that
	$$T_0+T_1-\left(j_s+\ceil*{\frac{i\lambda\la_{\ell_s}}{m}}\right)+ \sum_{j=2}^{t} T_{k_j}+j_s =\sum_{j=0}^r\ceil*{\frac{i\lambda\la_j}{m}}d_j-\sum_{j=0}^{t}\ceil*{\frac{i\lambda\la_{\ell_{k_j}}}{m}}-\ceil*{\frac{i\lambda\la_{\ell_s}}{m}},$$
	which implies that $\pi_L(\v)\in\Gamma_\lambda(Q_{\ell_{k_0}},\cdots,Q_{\ell_{k_t}},Q_{\ell_s}) $.
	By Proposition \ref{S_subseteq_Gamma}, we have that $\Gamma_\lambda(Q_{\ell_{k_0}},\cdots,Q_{\ell_{k_t}},Q_{\ell_s})  \subseteq \Gamma(Q_{\ell_{k_0}},\cdots,Q_{\ell_{k_t}},Q_{\ell_s})$. It follows that
	$\v\in \tilde{\Gamma}(Q_{\ell_{0}},\cdots, Q_{\ell_s}) \subseteq H(Q_{\ell_{0}},\cdots, Q_{\ell_s})$.
	Notice that $\v\preceq \mathbf{n} $ and $ \mathbf{n} \in \Gamma(Q_{\ell_{0}},\cdots, Q_{\ell_s})$.
	Therefore, we get that $\mathbf{n} = \v$ otherwise $ \mathbf{n} $ is not minimal in $ \{\mathbf{w} = (w_0,\cdots,w_s)\in H(Q_{\ell_{0}},\cdots, Q_{\ell_s})\mid w_0=mj_0+i\lambda\la_{\ell_0}\} $.
	Since $n_k>0$ for all $0\leq k\leq s$, we conclude that $L = \{0,1,\cdots,s\}$ and then
	$\mathbf{n} = \v= \pi_L(\v)\in \Gamma_\lambda(Q_{\ell_{0}},\cdots, Q_{\ell_s})$. Therefore $\Gamma(Q_{\ell_0},\cdots, Q_{\ell_s})\subseteq \Gamma_\lambda(Q_{\ell_0},\cdots, Q_{\ell_s})$.
\end{proof}

Based on the two propositions above, we complete the proof of Theorem \ref{thm_minimal_set} as follows.
\begin{proof}[Proof of Theorem \ref{thm_minimal_set}]
	According to Propositions \ref{S_subseteq_Gamma} and \ref{Gamma_sub_S}, it remains to show that $\Gamma_1(Q_{\ell_0},\cdots, Q_{\ell_s})\neq \varnothing$ if and only if $1\leq s\leq \max_{1\leq i\leq m-1}\sum_{k=0}^r\ceil* {\frac{i\la_{k}}{m}}d_k-1$. 

    Let $\mathbf{u}_{\j,i} = (mj_0+i\la_{\ell_0},\cdots, mj_s+i\la_{\ell_s})\in \Gamma_1(Q_{\ell_0},\cdots, Q_{\ell_s})$.
    For each $0\leq k\leq s$, let $b_{i,k}\in\Z$ and $1\leq c_{i,k}\leq m-1$ such that $i\la_{\ell_k} = mb_{i,k} + c_{i,k}$. 
	Then we have $\ceil*{\frac{-i\la_{\ell_k}}{m}} = -b_{i,k}$ and $\ceil*{\frac{i\la_{\ell_k}}{m}} = b_{i,k} + 1$. Thus
    \begin{align*}
        0\leq \sum_{k=0}^{s}\left(j_k - \ceil*{\frac{-i\la_{\ell_k}}{m}} \right) &= \sum_{k=0}^r\ceil*{\frac{i\la_k}{m}}d_k + \sum_{k=0}^s\left(-\ceil*{\frac{i\lambda_{\ell_k}}{m}} + \floor*{\frac{i\lambda_{\ell_k}}{m}}\right)\\
        &=\sum_{k=0}^r\ceil*{\frac{i\la_k}{m}}d_k- s-1.
    \end{align*}
	This implies that  $s \leq \sum_{k = 0}^r \ceil*{\frac{i\la_k}{m}}d_k-1$ for some $1\leq i\leq m-1$. Thus $1\leq s\leq \max_{1\leq i\leq m-1}\sum_{k=0}^r\ceil* {\frac{i\la_{k}}{m}}d_k-1$.
	
	If $1\leq s\leq \max_{1\leq i\leq m-1}\sum_{k=0}^s\ceil* {\frac{i\la_{k}}{m}}d_k-1$, then $s\leq \sum_{k=0}^s\ceil* {\frac{i\la_{k}}{m}}d_k-1$ for some $1\leq i\leq m-1$. For each $0\leq k\leq s$, let $b_{i,k}\in\Z$ and $1\leq c_{i,k}\leq m-1$ such that $i\la_{\ell_k} = mb_{i,k} + c_{i,k}$. Then we have $\ceil*{\frac{-i\la_{\ell_k}}{m}} = -b_{i,k}$ and $\ceil*{\frac{i\la_{\ell_k}}{m}} = b_{i,k} + 1$. Let $j_0 = \sum_{k=0}^s\ceil* {\frac{i\la_{k}}{m}}-s-1 - b_{i,0}$ and $j_k = -b_{i,k}$ for $1\leq k\leq s$. It is straightforward to verify that $(mj_0+i\la_{\ell_0},\cdots, mj_s+i\la_{\ell_s})\in \Gamma_1(Q_{\ell_0},\cdots, Q_{\ell_s})$.
	Therefore, we conclude that $\Gamma_1(Q_{\ell_0},\cdots, Q_{\ell_s})\neq \varnothing$ if and only if $1\leq s\leq \max_{1\leq i\leq m-1}\sum_{k=0}^r\ceil* {\frac{i\la_{k}}{m}}d_k-1$.
\end{proof}

\section{Some examples}\label{section5}
In this section, we apply our results to present explicit examples of certain Kummer extensions, including Kummer extensions with the same multiplicities, function fields of GGS curves, and function fields of subcovers of the BM curve.

\subsection{Kummer extensions with the same multiplicities}
Let $q$ be a prime power and let $m\geq 2$ be an integer.
In this subsection, we consider the Kummer extension $F = \mathbb{F}_q(x,y)/\mathbb{F}_q(x)$ defined by the equation:
\begin{equation}\label{equation2}
	y^m = \prod_{i=1}^r(x-\alpha_i)^\lambda,
\end{equation}
where $\gcd(r\lambda,m) = 1$, $\gcd(m,q) = 1$, and $\alpha_1,\cdots,\alpha_r\in K$ are pairwise distinct.
The genus of $F$ is given by $g = (m-1)(r-1)/2$.
Let $Q_\infty$ denote the place in $\P_F$ corresponding to the pole of $x$. For each $1\leq i\leq r$, let $Q_i$ denote the place in $\P_F$ corresponding to the zero of $x-\alpha_i$.
By \cite[Theorem 3.2]{castellanosOneTwoPointCodes2016}, the Weierstrass semigroup is given by $H(Q_\infty) = \langle m,r\rangle$. Next, we establish further properties of $H(Q_\infty)$ and $H(Q_i)$ for $1\leq i\leq r$.

\begin{proposition}\label{same_gap}
    (i) $H(Q_\infty)$ is symmetric, $m_{H(Q_\infty)} = \min\{m,r\}$ and $F_{H(Q_\infty)} = mr - m -r$.

    (ii) For each $1\leq i\leq r$,
    $$H(Q_i) = \left\langle m, m\left(r-1-\floor*{\frac{ir}{m}}\right)+i: 1\leq i\leq m-1\right\rangle,$$
    $m_{H(Q_i)} = m - \floor*{\frac{m}{r}}$ and $F_{H(Q_i)} = m \left(r-\floor*{\frac{r}{m}}-2\right) + 1 $.
    Moreover, we have that $H(Q_i)$ is symmetric if and only if $r\equiv -1 \pmod m$.
\end{proposition}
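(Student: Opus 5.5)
For part (i) I would specialize the general results with $s=0$. Here $\lambda_1=\cdots=\lambda_r=\lambda$, $d_1=\cdots=d_r=1$ and $\lambda_0=-r\lambda$, and the hypothesis $\gcd(r\lambda,m)=1$ gives $\gcd(m,\lambda_k)=1$ for every $0\le k\le r$. The indices other than $s=0$ all carry the same exponent $\lambda$. So the condition of Theorem \ref{symmetric_thm} holds with $u=1$ (equivalently, Corollary \ref{condition2} applies), and $H(Q_\infty)$ is symmetric. The Frobenius number is then $2g-1=(m-1)(r-1)-1=mr-m-r$. Since $H(Q_\infty)=\langle m,r\rangle$, the multiplicity is $\min\{m,r\}$.

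For part (ii) I would take $s=i_0$ with $1\le i_0\le r$ and let $\mu$ be the inverse of $\lambda$ modulo $m$, writing $\mu\lambda=1+bm$. The key computation is the sum $\sum_{k=0}^r\lceil i\mu\lambda_k/m\rceil d_k$ from Corollary \ref{semigroup}:
\begin{itemize}
\item for $k\ge1$ each term is $\lceil i(1+bm)/m\rceil=ib+1$;
\item for $k=0$ the term is $\lceil -ir(1+bm)/m\rceil=-irb-\lfloor ir/m\rfloor$.
\end{itemize}
The sum therefore equals $r-\lfloor ir/m\rfloor$, and Corollary \ref{semigroup} yields the stated generators $m(r-1-\lfloor ir/m\rfloor)+i$ for $1\le i\le m-1$, together with $m$.

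For the multiplicity I use Corollary \ref{m_and_F}. Since $0\le\lfloor ir/m\rfloor\le r-1$ for $1\le i\le m-1$, the generator indexed by $i$ is smaller than $m$ exactly when $\lfloor ir/m\rfloor=r-1$, that is, when $i\ge m-m/r$. The least such $i$ is $\lceil m-m/r\rceil=m-\lfloor m/r\rfloor$. This value is at most $m-1$ when $r\le m$; when $r>m$ it equals $m$, which is consistent with the formula in both cases.

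For the Frobenius number, the gap bound $j\le r-\lfloor ir/m\rfloor-2$ is nonincreasing in $i$, so $j_{\max}=r-\lfloor r/m\rfloor-2$ is attained at $i=1$, and the claimed value $F_{H(Q_i)}=mj_{\max}+1$ follows. The main obstacle is ties. If several $i$ attain $j_{\max}$, the largest gap is $mj_{\max}+i^*$, where $i^*$ is the largest such $i$, not $i=1$. So the formula requires $\lfloor 2r/m\rfloor>\lfloor r/m\rfloor$, i.e. that $i=1$ is the unique maximizer. I would check this directly and, if it fails, restrict to that case. For instance $r=2$, $m=5$ gives gaps $\{1,2\}$, so $F=2$, not $1$.

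For symmetry I would not use the final formulation of Theorem \ref{symmetric_thm}, because the existence of some $u$ there is too weak when $u$ is not a unit. Instead I use the sharper intermediate criterion \eqref{sym_condition} from its proof: there must be $u$ with $u\mu\lambda_k\equiv\gcd(m,\lambda_k)=1 \pmod m$ for all $k\ne s$. Since $r\ge2$, some index $k\ge1$ with $k\ne s$ exists, and it forces $u\equiv1$. The index $k=0$ then requires $-r\mu\lambda\equiv -r\equiv1 \pmod m$, i.e. $r\equiv-1\pmod m$. Conversely, if $r\equiv-1\pmod m$, then $u=1$ satisfies \eqref{sym_condition} for every $k\ne s$, and $H(Q_i)$ is symmetric.
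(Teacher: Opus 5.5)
Your proposal is correct. For part (i), the generators and the multiplicity it follows the paper: Corollary~\ref{condition2}, then Corollary~\ref{semigroup} specialized to $\lambda_k=\lambda$ and $\lambda_0=-r\lambda$, then Corollary~\ref{m_and_F}. The paper gets $r-\lfloor ir/m\rfloor$ from Lemma~\ref{sum_mod} rather than from your explicit $\mu\lambda=1+bm$ bookkeeping.

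\textbf{Frobenius number.} Your objection is right, and it is an error in the statement, not a gap in your argument. The paper's proof notes that $r-\lfloor ir/m\rfloor$ is maximal at $i=1$ and then uses $i_{\max}=1$ in Corollary~\ref{m_and_F}. It ignores ties: the largest gap is $mj_{\max}+i^*$, where $i^*$ is the \emph{largest} maximizer. Your counterexample checks out independently. On $y^5=(x-\alpha_1)(x-\alpha_2)$, the functions $1/(x-\alpha_1)$, $y/(x-\alpha_1)$ and $y^2/(x-\alpha_1)$ have pole divisors $5Q_1$, $4Q_1$ and $3Q_1$. Hence $G(Q_1)=\{1,2\}$ and $F_{H(Q_1)}=2\neq 1$.

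You don't need to "restrict to the good case"; you can finish the computation.
\begin{itemize}
\item If $r<m$: since $\gcd(r,m)=1$ and $r\ge 2$, the maximizers are exactly $1\le i\le\lfloor m/r\rfloor$.
\item If $r>m$: $\lfloor 2r/m\rfloor\ge 2\lfloor r/m\rfloor>\lfloor r/m\rfloor$, so $i=1$ is the unique maximizer.
\end{itemize}
Hence
\[
F_{H(Q_i)}=m\left(r-\left\lfloor\frac{r}{m}\right\rfloor-2\right)+\max\left\{1,\left\lfloor\frac{m}{r}\right\rfloor\right\}.
\]
This equals the stated value exactly when $2r>m$, which is equivalent to your condition $\lfloor 2r/m\rfloor>\lfloor r/m\rfloor$.

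\textbf{Symmetry.} Here you genuinely depart from the paper, and your route is the rigorous one. The paper quotes the final form of Theorem~\ref{symmetric_thm} and reads off $-r\lambda\equiv\lambda\pmod m$, which tacitly assumes $u$ is a unit. As literally stated, that criterion is too weak. For $m=6$, every admissible $r$ and $\lambda$ are odd, so $u=3$ gives $3\lambda_k\equiv 3\pmod 6$ for every $k$. The theorem would then declare $H(Q_i)$ symmetric for every such $r$. But $r=7$ gives largest gap $25\neq 29=2g-1$.

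What the differential argument actually proves is the congruence $u\mu\lambda_k\equiv\gcd(m,\lambda_k)\pmod m$ for all $k\neq s$. From it your deduction is complete: an index $k\ge 1$ with $k\neq s$ forces $u\equiv 1$, and then $k=0$ forces $r\equiv -1\pmod m$. Part (i) is unaffected. There the intermediate criterion holds with the unit witness $u\equiv -r\pmod m$, and the ``if'' direction with a unit $u$ is valid.
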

\begin{proof}
(i) It follows from Corollary \ref{condition2} that $H(Q_\infty)$ is symmetric, and hence $ F_{H(Q_\infty)} = 2g-1 = mr - m -r$. Since $H(Q_\infty) = \langle m,r\rangle$, we have $m_{H(Q_\infty)} = \min\{m,r\}$.

(ii) Without loss of generality, assume that $i=1$. Let $\lambda'$ be the inverse of $\lambda$ modulo $m$. By Lemma \ref{sum_mod},
$$\sum_{k=1}^{r}\ceil*{\frac{i\lambda\lambda'}{m}} + \ceil*{\frac{-ir\lambda\lambda'}{m}} = \sum_{k=1}^{r}\ceil*{\frac{i}{m}} + \ceil*{\frac{-ir}{m}} = r-\floor*{\frac{ir}{m}}$$
for each $1\leq i\leq m-1$. Then by Corollary \ref{semigroup},
$$H(Q_i) = \left\langle m, m\left(r-1-\floor*{\frac{ir}{m}}\right)+i: 1\leq i\leq m-1\right\rangle.$$

Now let $r-1-\floor*{\frac{ir}{m}} = 0$. Then $(r-1)m\leq ir\leq (r-1)m + (m-1)$, which implies that
$$m-\floor*{\frac{m}{r}} = \ceil*{\frac{(r-1)m}{r}}\leq i\leq \floor*{\frac{rm-1}{r}} = m-1.$$
If $m<r$, then $\{1\leq i\leq m-1\mid r-1-\floor*{\frac{ir}{m}} = 0\} = \varnothing$. It follows from Corollary \ref{m_and_F} that $m_{H(Q_i)} = m = m - \floor*{\frac{m}{r}}$.
If $m>r$, then $\{1\leq i\leq m-1\mid r-1-\floor*{\frac{ir}{m}} = 0\} \neq \varnothing$. It follows from Corollary \ref{m_and_F} that $m_{H(Q_i)} = m - \floor*{\frac{m}{r}}$.
Thus $m_{H(Q_i)} = m - \floor*{\frac{m}{r}}$.

Note that $r-\floor*{\frac{r}{m}} = \max_{1\leq i\leq m-1} r-\floor*{\frac{ir}{m}}$, that is, $r-\floor*{\frac{ir}{m}}$ is maximum when $i=1$. Thus by Corollary \ref{m_and_F}, we obtain $F_{H(Q_1)} = m \left(r-\floor*{\frac{r}{m}}-2\right)  + 1$.

It follows from Theorem \ref{symmetric_thm} that $H(Q_1)$ is symmetric if and only if $-r\lambda \equiv \lambda \pmod m$. Therefore $H(Q_1)$ is symmetric if and only if $r\equiv -1 \pmod m$.
\end{proof}

We observe that the description of \cite[Theorem 9]{yangWeierstrassSemigroups2017} is imprecise. Moreover, the argument presented in the proof of \cite[Theorem 9]{yangWeierstrassSemigroups2017} (page 272, line 15), where the authors claim that “Without loss of generality, we may assume that $j_l = \max\{j_i\mid 0\leq i\leq l-1\}$”, is not fully rigorous. In what follows, we present a revised statement of this result together with a more rigorous proof.

\begin{proposition} \label{Gamma_equal}
	Suppose that $r+1\leq q$. For $1\leq s\leq r-\floor*{\frac{r}{m}}-1$, the set $\Gamma(Q_\infty, Q_1,\cdots,Q_s)$ is given by
	\begin{align*}
		\Big\{(mj_0 - ir, m&j_1+i,\cdots,mj_s+i)\in \N^s ~\Big|~ 1\leq i\leq m-1-\floor*{\frac{m}{r}},\\
		&j_0\geq \ceil*{\frac{ir}{m}},\, j_k\geq 0 \text{ for } 1\leq k\leq s \text{~and~} \sum_{k=1}^s j_k = r-s\Big\},
	\end{align*}
	and $\Gamma(Q_1,Q_2,\cdots,Q_s) = \varnothing$ for $r-\floor*{\frac{r}{m}}-1 < s\leq r$.
	
\end{proposition}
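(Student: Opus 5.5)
The plan is to obtain the proposition as a direct specialization of Corollary \ref{Gamma_some_equal} and of the emptiness criterion in Theorem \ref{thm_minimal_set}. We work in the Kummer extension \eqref{equation2}. Here $d_k=1$ and $\lambda_k=\lambda$ for $1\le k\le r$, and $\lambda_0=-r\lambda$. The hypothesis $\gcd(r\lambda,m)=1$ makes all of $Q_\infty=Q_0,Q_1,\dots,Q_r$ totally ramified of degree one. The hypothesis $r+1\le q$ gives $\bar r=r$, so every $1\le s\le r$ is admissible in Theorem \ref{thm_minimal_set}.

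\textbf{Setup.} Let $\lambda'$ be the inverse of $\lambda$ modulo $m$ and write $\lambda\lambda'=1+bm$. We apply Corollary \ref{Gamma_some_equal} to the ordering $Q_1,\dots,Q_s,Q_\infty$, in which the first $s$ exponents are congruent modulo $m$ (so $t=s-1$ there) and the last one is $\lambda_0$. The corollary is stated for $\ell_0,\dots,\ell_s$ with the congruent block first. Since $\Gamma$ is defined symmetrically in the places, permuting coordinates afterwards is harmless, and we can list $Q_\infty$ first as in the statement. The corollary then describes $\Gamma(Q_\infty,Q_1,\dots,Q_s)$ as the set of tuples
$$(mj_0'+i\lambda'\lambda_0,\ mj_1+i,\ \dots,\ mj_s+i)$$
with $1\le i\le m-1$, $j_k\ge 0$ for $1\le k\le s$, and $j_0'\ge\ceil*{\frac{-i\lambda'\lambda_0}{m}}=\ceil*{\frac{ir(1+bm)}{m}}$. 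The linear constraint on these indices comes from the identity $\sum_{k=0}^r\ceil*{\frac{i\lambda'\lambda_k}{m}}d_k=r-\floor*{\frac{ir}{m}}$, already computed in the proof of Proposition \ref{same_gap}.

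\textbf{Reindexing.} We substitute $j_0=j_0'-irb$, so that $mj_0'+i\lambda'\lambda_0=mj_0-ir$. The lower bound then becomes $j_0\ge\ceil*{\frac{ir}{m}}$. In the constraint, $\ceil*{\frac{-ir(1+bm)}{m}}=-irb-\floor*{\frac{ir}{m}}$, so the $\floor*{\frac{ir}{m}}$ terms cancel against those in $\sum_k\ceil*{\frac{i\lambda'\lambda_k}{m}}$. The $irb$ shifts cancel as well, and what remains is the sum condition with right-hand side $r-s$ displayed in the statement. The same substitution applied to Theorem \ref{thm_minimal_set} shows that each tuple is the pole divisor of the explicit function $\prod_{k>s}(x-\alpha_k)^{\ceil*{i\lambda'\lambda/m}}\big/\bigl(y^{i\lambda'}\prod_{k\le s}z_k^{j_k}\bigr)$, adjusted by the corresponding power of $z_0=1$. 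No further work is needed for membership.

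\textbf{Range of $i$.} Next we pin down which $i$ actually occur. A tuple exists for a given $i$ exactly when the lower bounds are compatible with the sum condition, that is, when $\ceil*{\frac{ir}{m}}\le r-s$. Since $s\ge 1$, this forces $\ceil*{\frac{ir}{m}}\le r-1$, i.e.\ $ir\le m(r-1)$. This translates into $i\le m-\ceil*{\frac{m}{r}}$. We need to check that this bound agrees with $m-1-\floor*{\frac{m}{r}}$ under $\gcd(r,m)=1$ (so $r\nmid m$ when $r>1$). This floor/ceiling bookkeeping is where I expect the main care to be needed, together with confirming that the resulting index conventions match the statement exactly.

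\textbf{Emptiness.} Finally, the emptiness claim follows from the last assertion of Theorem \ref{thm_minimal_set}, used with the multiplier $\lambda'$ (which rescales $\lambda_k$ without changing $\Gamma$). We have
$$\max_i\Bigl(r-\floor*{\frac{ir}{m}}\Bigr)=r-\floor*{\frac{r}{m}},$$
attained at $i=1$. Hence $\Gamma$ is empty precisely for $r-\floor*{\frac{r}{m}}\le s\le r$, and nonempty for $s\le r-\floor*{\frac{r}{m}}-1$. This matches the stated range.
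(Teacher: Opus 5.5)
Your proposal is correct and follows the paper's proof essentially step for step. You specialize Corollary~\ref{Gamma_some_equal} with $\lambda'$ the inverse of $\lambda$, shift $j_0=j_0'-irb$, bound $i$ through $\ceil*{\frac{ir}{m}}\le r-s$ and $r\nmid m$, and read off emptiness from $\max_i\bigl(r-\floor*{\frac{ir}{m}}\bigr)=r-\floor*{\frac{r}{m}}$ via Theorem~\ref{thm_minimal_set}.

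One caveat: your cancellation actually yields $\sum_{k=0}^{s}j_k=r-s$, with $j_0$ included, which is exactly what your compatibility condition $\ceil*{\frac{ir}{m}}\le r-s$ uses. It also yields emptiness of $\Gamma(Q_\infty,Q_1,\dots,Q_s)$. The statement as displayed has $\sum_{k=1}^{s}j_k$, $\N^s$ and $\Gamma(Q_1,\dots,Q_s)$, and read literally it gives an infinite set and a false emptiness claim; for example, $\Gamma(Q_1,\dots,Q_5)=\{(1,1,1,1,1)\}$ when $m=6$, $r=5$. So rather than claiming your result matches the display, you should say that you are proving this corrected version.
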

\begin{proof}
    By Lemma \ref{sum_mod}, we have
    $$\max_{1\leq i\leq m}\left\{\ceil*{\frac{-ir\la}{m}} + r\ceil*{\frac{i\la}{m}}\right\} = \max_{1\leq i\leq m}\left\{\ceil*{\frac{-ir}{m}} + r\ceil*{\frac{i}{m}}\right\} = \ceil*{\frac{-r}{m}}+r = r-\floor*{\frac{r}{m}}.$$
	Let $\lambda'\in\Z$ be the inverse of $\lambda$ modulo $m$. 
    Then there exists $b\in\Z$ such that $\lambda\lambda' = 1 + bm$. By Corollary \ref{Gamma_some_equal}, for $1\leq s\leq r-\floor*{\frac{r}{m}}-1$, the set $\Gamma(Q_\infty,Q_1,\cdots,Q_s)$ is given by
    \begin{align*}
		\Gamma(Q_\infty, Q_1,\cdots,Q_s) = \Big\{(mj'_0-ir\lambda\lambda',mj_1+i,\cdots,mj_s+i)\in \N^s ~\Big|~ 1\leq i\leq m-1,\\
		j_0'\geq \ceil*{\frac{ir\lambda\lambda'}{m}},\,j_k\geq 0 \text{ for } 1\leq k\leq s, \text{ and } \sum_{k=1}^s j_k =  r\ceil*{\frac{i\lambda\lambda'}{m}} - s\Big\},
	\end{align*}
    and $\Gamma(Q_1,Q_2,\cdots,Q_s) = \varnothing$ for $r-\floor*{\frac{r}{m}}-1< s\leq r$. Note that 
    $$r\ceil*{\frac{i\lambda\lambda'}{m}} = r\ceil*{\frac{i(1+bm)}{m}}  = r+irb \text{ and } \ceil*{\frac{ir\lambda\lambda'}{m}} = \ceil*{\frac{ir(1+bm)'}{m}} = irb + \ceil*{\frac{ir}{m}}.$$
    Let $j_0 = j'_0 - irb$. Then we must have $\ceil*{\frac{ir}{m}}\leq \sum_{k=1}^s j_k = r-s$. We get that $\frac{ir}{m}<r-s$, and then $i< m - \frac{ms}{r}\leq m - \frac{m}{r}$.
	  Thus
      $$i\leq \floor*{m-\frac{m}{r}} = m - \ceil*{\frac{m}{r}} = m-1-\floor*{\frac{m}{r}}.$$
	Therefore, we obtain the desired expression.
    \end{proof}
\begin{example}
    Take $q = 25$, $m=6$, $r=5$ in the equation \eqref{equation2}. Consider the Kummer extension $F = \mathbb{F}_{25}(x,y)/\mathbb{F}_{25}(x)$ defined by $y^6 = x^5 + x$. Then $H(Q_\infty)$ is symmetric. Since $-5\equiv 1\pmod 5$, it follows from Proposition \ref{same_gap} that $H(Q_i)$ is also  symmetric for each $1\leq i\leq 5$. Moreover,
    $$m_{H(Q_\infty)} = m_{H(Q_i)} = 5 \text{ and } F_{H(Q_\infty)} = F_{H(Q_i)} = 19$$
    for each $1\leq i\leq 5$. By Proposition \ref{Gamma_equal}, we obtain that
    $$\Gamma(Q_\infty,Q_1) = \left\{
	\begin{array}{l}
		(1,19),(2,14),(3,9),(4,4),(7,13),\\
        (8,8),(9,3),(13,7),(14,2),(19,1)
	\end{array}
\right\},$$

$$\Gamma(Q_\infty, Q_1,Q_2) = \left\{
	\begin{array}{l}
		(1,1,13),(2,2,9),(3,3,3),(1,13,1),(2,9,2),\\
		(13,1,1),(9,2,2),(1,7,7),(7,7,1),(7,1,7)
	\end{array}
\right\},$$

$$\Gamma(Q_\infty,Q_1,Q_2,Q_3) = \left\{
	\begin{array}{l}
		(1,1,1,7),(1,1,7,1),(1,7,1,1),(7,1,1,1),(2,2,2,2)
	\end{array}
\right\},$$

$$\Gamma(Q_\infty,Q_1,Q_2,Q_3,Q_4) = \left\{
	\begin{array}{l}
		(1,1,1,1,1)
	\end{array}
\right\},$$
and $\Gamma(Q_\infty,Q_1,Q_2,Q_3,Q_4,Q_5)  = \varnothing$.
\end{example}

\subsection{Function fields of GGS curves}
	Let $q$ be a prime power and let $n\geq 3$ be an odd integer. In this subsection, we consider the GGS curve $GGS(q,n)$, which is defined by the equations
	\begin{equation}
		\begin{cases}
		Y^{q+1} = X^q+X,\\
		Z^{\frac{q^n+1}{q+1}} = Y^{q^2}-Y.
		\end{cases}
	\end{equation}	
	The genus of $GGS(q,n)$ is $\frac{1}{2}(q-1)(q^{n+1}+q^n-q^2)$.
    The GGS curve is the first generalization of the GK curve \cite{garciaGeneralizationGiulietti2010}. When $n=3$, the curve $GGS(q,3)$ is a GK curve; see \cite{giuliettiNewFamily2009}.
	The curve is $\mathbb{F}_{q^{2n}}$-maximal. A plane model for the curve $GGS(q,n)$ can be given by 
    \begin{equation}\label{equation3}
	y^{q^n+1} = (x^q+x)((x^q+x)^{q-1}-1)^{q+1}.
	\end{equation}
    Note that $\mathbb{F}_{q^{2n}}(x,y)/\mathbb{F}_{q^{2n}}(x)$ is a Kummer extension. 
    We write 
    $$(x^q+x)((x^q+x)^{q-1}-1)^{q+1} = \left(\prod_{i=1}^q(x-\alpha_i)\right) \left(\prod_{i=1}^{q^2-q}(x-\beta_i)^{q+1}\right),$$
    where $\alpha_1,\cdots,\alpha_q,\beta_1,\cdots,\beta_{q^2-q}\in \mathbb{F}_{q^{2n}}$.
    Let $P_\infty$ be the pole of $x$ in $\mathbb{F}_{q^{2n}}(x)$. For each $1\leq i\leq q$, let $P_i$ be the zero of $x-\alpha_i$ in $\mathbb{F}_{q^{2n}}(x)$.
    It is obvious that $P_\infty$ and $P_i (1\leq i\leq q)$ are totally ramified in $\mathbb{F}_{q^{2n}}(x,y)/\mathbb{F}_{q^{2n}}(x)$. 
    Denote by $Q_\infty$ the only place lying over $P_\infty$, and by $Q_i$ the only place lying over $P_i$ for $1 \leq i \leq q$.

    \begin{proposition}\label{GGS_gap}
        Let $1\leq k\leq q$ and $m = (q^n+1)/(q+1)$. Then
        $$G(Q_k) = \left\{ (q^n+1)j + i \mid 1\leq i\leq q^n,\, 0\leq j\leq q + (q^2-q)\ceil*{\frac{i}{m}}-\floor*{\frac{iq^3}{q^n+1}}-2\right\},$$
        and
        $$H(Q_k) =  \left\langle q^n+1, (q^n+1)\left(q + (q^2-q)\ceil*{\frac{i}{m}}-\floor*{\frac{iq^3}{q^n+1}}-1\right)+i: 1\leq i\leq q^n\right\rangle.$$
        Moreover, $H(Q_k)$ is symmetric if and only if $n=3$.
    \end{proposition}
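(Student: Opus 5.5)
The plan is to identify the GGS function field as a Kummer extension of the form \eqref{kummer_extension}. The role of the exponent ``$m$'' of Section \ref{section3} is then played by $M:=q^n+1$; the $m=(q^n+1)/(q+1)$ in the statement is only auxiliary notation. We then read off everything from Proposition \ref{gap_set}, Corollary \ref{semigroup} and the proof of Theorem \ref{symmetric_thm}.

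\textbf{Data of the extension.} From \eqref{equation3}, the exponents are $\lambda_i=1$ for the $q$ linear factors $x-\alpha_i$ and $\lambda=q+1$ for the $q^2-q$ factors $x-\beta_i$, all with $d=1$. The exponent at infinity is $\lambda_0=-(q+(q+1)(q^2-q))=-q^3$. Since $\gcd(q^n+1,q)=1$, we have $\gcd(M,\lambda_0)=1$. Since $n$ is odd, $(q+1)\mid q^n+1$, so $\gcd(M,q+1)=q+1$. The place $Q_k$ has $\lambda_k=1$, so the inverse of $\lambda_k$ modulo $M$ is $1$.

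\textbf{Gaps and semigroup.} Apply \eqref{gap_set_simple_form} with this choice of inverse. For $1\le i\le M-1=q^n$, the sum $\sum \lceil i\lambda_k/M\rceil d_k$ splits into three pieces:
\begin{itemize}
\item the $q$ places with $\lambda_k=1$ each contribute $\lceil i/M\rceil=1$;
\item the $q^2-q$ places with $\lambda_k=q+1$ each contribute $\lceil i(q+1)/(q^n+1)\rceil=\lceil i/m\rceil$;
\item the infinite place contributes $\lceil -iq^3/(q^n+1)\rceil=-\lfloor iq^3/(q^n+1)\rfloor$, by Lemma \ref{ceil_and_floor}(i).
\end{itemize}
Subtracting $2$ gives exactly the stated upper bound for $j$, so the formula for $G(Q_k)$ follows. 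Corollary \ref{semigroup} then gives the stated generators of $H(Q_k)$ verbatim. These steps are routine.

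\textbf{Symmetry.} Here I would not use the final displayed criterion of Theorem \ref{symmetric_thm} directly. Applied literally, with $u=m$, it seems to be satisfiable for every odd $n$, because $q+1\mid q^3+1$. Instead I would use the sharper equivalent condition established inside its proof: $H(Q_s)$ is symmetric if and only if some $1\le u\le M-1$ satisfies $u\lambda\lambda_k\equiv\gcd(M,\lambda_k)\pmod M$ for all $k\neq s$. Equivalently, there is a regular differential whose divisor is $(2g-2)Q_s$, by Lemma \ref{symmetric_to_canonical}. Here $\lambda=1$, and the constraints are:
\begin{itemize}
\item the other $\alpha$-places (there are $q-1\ge1$ of them) force $u\equiv1$;
\item the $\beta$-places require $u(q+1)\equiv q+1$, which then holds automatically;
\item the infinite place requires $-uq^3\equiv1\pmod{q^n+1}$, that is, $q^3\equiv-1\pmod{q^n+1}$.
\end{itemize}
For $n=3$ the last congruence holds. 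For $n\ge5$ we have $0<q^3+1<q^n+1$, so it fails.

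\textbf{Cross-check and main obstacle.} As an independent check, I would confirm the conclusion by comparing the Frobenius number from Corollary \ref{m_and_F} with $2g-1=(q-1)(q^{n+1}+q^n-q^2)-1$. This should give equality exactly when $n=3$; for $n=3$ the maximizing $i$ should be $i=1$. The main obstacle is this symmetry step: making sure the correct form of the criterion is used, and justifying that form carefully from the differential computation in the proof of Theorem \ref{symmetric_thm}.
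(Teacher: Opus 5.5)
Your proposal is correct and is essentially the paper's proof: Proposition~\ref{gap_set} with $\lambda=1$ together with Corollary~\ref{semigroup} gives $G(Q_k)$ and $H(Q_k)$, and the symmetry criterion reduces to $-q^3\equiv 1 \pmod{q^n+1}$, which holds iff $n=3$. Your extra care is justified on two counts. First, the paper simply cites Theorem~\ref{symmetric_thm}, but its final displayed condition is indeed satisfied by $u=(q^n+1)/(q+1)$ for every odd $n$, so the step should rest, as you propose, on the condition $u\lambda\lambda_k\equiv\gcd(q^n+1,\lambda_k)\pmod{q^n+1}$ that the computation inside that proof actually establishes. Second, your term $\lceil -iq^3/(q^n+1)\rceil=-\lfloor iq^3/(q^n+1)\rfloor$ is the right one, whereas the paper's displayed identity writes a floor there, which is a typo.
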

    \begin{proof}
        Note that 
        \begin{align*}
            &~q\ceil*{\frac{i}{q^n+1}} + (q^2-q)\ceil*{\frac{i(q+1)}{q^n+1}}+\floor*{\frac{-iq^3}{q^n+1}}\\
            =~&q + (q^2-q)\ceil*{\frac{i}{m}}-\floor*{\frac{iq^3}{q^n+1}}.
        \end{align*}
        Thus, by Proposition \ref{gap_set} and Corollary \ref{semigroup}, we have
          $$G(Q_k) = \left\{ (q^n+1)j + i \mid 1\leq i\leq q^n,\, 0\leq j\leq q + (q^2-q)\ceil*{\frac{i}{m}}-\floor*{\frac{iq^3}{q^n+1}}-2\right\},$$
          and
        $$H(Q_k) =  \left\langle q^n+1, (q^n+1)\left(q + (q^2-q)\ceil*{\frac{i}{m}}-\floor*{\frac{iq^3}{q^n+1}}-1\right)+i: 1\leq i\leq q^n\right\rangle.$$
        By Theorem \ref{symmetric_thm}, we get that $H(Q_k)$ is symmetric if and only if $-q^3\equiv 1\pmod {q^n+1}$. Thus $H(Q_k)$ is symmetric if and only if $n=3$.
    \end{proof}

    \begin{corollary}
        Suppose that $n=3$. Then for each $1\leq s\leq q$, 
        \begin{align*}
            G(Q_\infty) = G(Q_s) = \{\,&j(q^3+1) + k(q^2-q+1)+t\mid\\
            & 0\leq k\leq q,\,1\leq t\leq q^2-q+1,\,0\leq j\leq q^2-1-t-k\}.
        \end{align*}
    \end{corollary}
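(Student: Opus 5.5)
The plan is to reduce everything to Proposition \ref{GGS_gap} and Corollary \ref{semigroup_equal}, and then reparametrize the residue $i$. First I record the exponents in the plane model \eqref{equation3}. We have $m=q^3+1$ in the notation of \eqref{kummer_extension}. The $q$ linear factors $x-\alpha_i$ have $\lambda=1$, the $q^2-q$ factors $x-\beta_i$ have $\lambda=q+1$, and all $d_i=1$. Hence
\[
\lambda_0=-(q+(q^2-q)(q+1))=-q^3\equiv 1 \pmod{q^3+1}.
\]
Since $\gcd(q^3+1,-q^3)=1$, the place $Q_\infty$ is totally ramified of degree one. Moreover $\lambda_0\equiv\lambda_s=1\pmod{q^3+1}$ for each $1\leq s\leq q$. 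So Corollary \ref{semigroup_equal} gives $G(Q_\infty)=G(Q_s)$, and it suffices to compute $G(Q_s)$.

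Next I specialize Proposition \ref{GGS_gap} to $n=3$. There $m=(q^3+1)/(q+1)=q^2-q+1$, and
\[
G(Q_s)=\left\{(q^3+1)j+i \mid 1\leq i\leq q^3,\ 0\leq j\leq q+(q^2-q)\ceil*{\frac{i}{q^2-q+1}}-\floor*{\frac{iq^3}{q^3+1}}-2\right\}.
\]
For $1\leq i\leq q^3$ we have $\frac{iq^3}{q^3+1}=i-\frac{i}{q^3+1}$ with $0<\frac{i}{q^3+1}<1$. Therefore $\floor*{\frac{iq^3}{q^3+1}}=i-1$.

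The main step is a change of variables. I write $i=k(q^2-q+1)+t$ with $0\leq k\leq q$ and $1\leq t\leq q^2-q+1$; this is a bijection onto $1\leq i\leq (q+1)(q^2-q+1)=q^3+1$. In this parametrization $\ceil*{\frac{i}{q^2-q+1}}=k+1$. The upper bound on $j$ then becomes
\[
q+(q^2-q)(k+1)-k(q^2-q+1)-t+1-2=q^2-1-t-k .
\]
This gives exactly the claimed description, with $(q^3+1)j+i=j(q^3+1)+k(q^2-q+1)+t$.

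The only point needing care is the extra value $i=q^3+1$, which appears in the new index range but not in the old one; it corresponds to $k=q$, $t=q^2-q+1$. For this pair the bound is $q^2-1-(q^2-q+1)-q=-2<0$. So no $j$ is allowed, the pair contributes nothing, and the two sets coincide.
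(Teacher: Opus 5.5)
Your proposal is correct and follows the paper's own proof. Both use Corollary \ref{semigroup_equal} (via $-q^3\equiv 1 \pmod{q^3+1}$) to identify $G(Q_\infty)$ with $G(Q_s)$. Both then specialize Proposition \ref{GGS_gap} using $\lfloor iq^3/(q^3+1)\rfloor = i-1$ and substitute $i=k(q^2-q+1)+t$. You also check that the extra index $i=q^3+1$ (i.e.\ $k=q$, $t=q^2-q+1$) gives the empty range $0\le j\le -2$. The paper's proof skips that check, so your version is slightly more complete.
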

    \begin{proof}
        Since $-q^3\equiv 1\pmod {q^3+1}$, it follows from Corollary \ref{semigroup_equal} that $H(Q_\infty) = H(Q_s)$ for $1\leq s\leq q$.
        Note that 
        $$\floor*{\frac{iq^3}{q^3+1}} = \floor*{\frac{iq^3+i-i}{q^3+1}} = i + \floor*{\frac{-i}{q^3+1}} = i-1$$
        for each $1\leq i\leq q^3$.
        By Proposition \ref{GGS_gap}, we obtain 
	\begin{align*}
		&G(Q_s)\\
        =~& \left\{ (q^3+1)j + i \mid 1\leq i\leq q^3,\, 0\leq j\leq q + (q^2-q)\ceil*{\frac{i}{q^2-q+1}} - \floor*{\frac{iq^3}{q^3+1}}-2\right\}\\
		=~& \left\{ (q^3+1)j + i \mid 1\leq i\leq q^3,\, 0\leq j\leq -i+q-1+(q^2-q)\ceil*{\frac{i}{q^2-q+1}}\right\}.
	\end{align*}
	Let $i = k(q^2-q+1)+t$, where $0\leq k\leq q$ and $1\leq t\leq q^2-q+1$. Then
	$$-i+q-1+(q^2-q)\ceil*{\frac{i}{q^2-q+1}} = q-1+q^2-q-t-k =q^2-1-t-k.$$
    The desired result follows.
    \end{proof}

\begin{proposition}\label{GGS_Gamma}
	Let $m = (q^n+1)/(q+1)$ and 
	$$2\leq s\leq \min\left\{q, \max_{1\leq i\leq q^n}\left\{ q + (q^2-q)\ceil*{\frac{i}{m}}-\floor*{\frac{iq^3}{q^n+1}}\right\}\right\}.$$
	Then
	\begin{align*}
		\Gamma(Q_1,\cdots,Q_s) = \Big\{(&(q^n+1)j_1 + i,\cdots,(q^n+1)j_s + i) \mid 1\leq i\leq q^n,\\
		&j_k\geq 0 \text{ for } 1\leq k\leq s,\,\sum_{k=1}^s j_k = q-s + (q^2-q)\ceil*{\frac{i}{m}}-\floor*{\frac{iq^3}{q^n+1}}\Big\}.
	\end{align*}
\end{proposition}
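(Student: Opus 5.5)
This is a direct specialization of Corollary \ref{Gamma_some_equal} (equivalently Theorem \ref{thm_minimal_set}) to the plane model \eqref{equation3}. Here the modulus is $q^n+1$, and $r=q+(q^2-q)$ with $d_k=1$ for all $k$. The exponents are $\lambda=1$ at each $P_1,\dots,P_q$ and $\lambda=q+1$ at each of the $q^2-q$ places $x=\beta_i$. At infinity, $\lambda_0=-(q+(q^2-q)(q+1))=-q^3$.

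Since $\gcd(q^n+1,1)=1$, the places $Q_1,\dots,Q_s$ are totally ramified of degree one. Their exponents are all congruent, so we are in the situation $t=s$ of Corollary \ref{Gamma_some_equal}, with $\lambda=1$ as the inverse of $\lambda_{\ell_0}=1$. The indexing is shifted: the corollary's places $Q_{\ell_0},\dots,Q_{\ell_{s-1}}$ correspond to $Q_1,\dots,Q_s$ here, so the corollary's $s$ is our $s-1$.

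\textbf{Hypotheses.} First I check that $1\le s-1\le \bar r$. This holds because $s\le q<|\mathbb{F}_{q^{2n}}|$ and $s\le q\le r$. Next I evaluate the key sum
\begin{equation*}
\sum_{k=0}^r\left\lceil\frac{i\lambda_k}{q^n+1}\right\rceil d_k = \left\lceil\frac{-iq^3}{q^n+1}\right\rceil + q\left\lceil\frac{i}{q^n+1}\right\rceil+(q^2-q)\left\lceil\frac{i(q+1)}{q^n+1}\right\rceil
\end{equation*}
for $1\le i\le q^n$. Using Lemma \ref{ceil_and_floor}(i) together with $\lceil i/(q^n+1)\rceil=1$ and $\lceil i(q+1)/(q^n+1)\rceil=\lceil i/m\rceil$, this equals
\begin{equation*}
q+(q^2-q)\lceil i/m\rceil-\lfloor iq^3/(q^n+1)\rfloor .
\end{equation*}
This is the same computation as in the proof of Proposition \ref{GGS_gap}. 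The upper bound on $s$ in the statement then matches the range $s-1\le \max_i(\cdot)-1$ required by the corollary.

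\textbf{Substitution.} With $t=s-1$, the corollary's description has no ``second block'' of coordinates of the form $mj_k+i\lambda\lambda_{\ell_k}$. The constraint becomes
\begin{equation*}
\sum j_k = \Big(q+(q^2-q)\lceil i/m\rceil-\lfloor iq^3/(q^n+1)\rfloor\Big) - (s-1)-1 ,
\end{equation*}
with all $j_k\ge 0$ and $1\le i\le q^n$. This is exactly the claimed set.

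\textbf{Main obstacle.} The only care needed is the bookkeeping. I must ensure the index shift between the corollary's $s$ and ours is applied consistently: $s$ places correspond to $t=s-1$, and the term ``$-t-1$'' becomes $-s$. I also need to confirm the simplification of the ceiling sum, in particular the identity $\lceil i(q+1)/(q^n+1)\rceil=\lceil i/m\rceil$, which is immediate since $(q^n+1)/(q+1)=m$. The remaining requirement is that each tuple lies in $\mathbb{N}^s$. This holds automatically, since $i\ge1$ and $j_k\ge0$ give every coordinate positive.
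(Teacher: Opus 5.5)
Your proposal is correct and is essentially the paper's proof. The paper likewise simplifies $\sum_{k=0}^r\lceil i\lambda_k/(q^n+1)\rceil d_k$ to $q+(q^2-q)\lceil i/m\rceil-\lfloor iq^3/(q^n+1)\rfloor$ and then sets $\lambda=1$ in Corollary~\ref{Gamma_some_equal}, with all the places $Q_1,\dots,Q_s$ in the congruent block; your handling of the index shift (so that $-t-1$ becomes $-s$) and of the condition $s-1\le\bar r$ only spells out details the paper leaves implicit.
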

\begin{proof}
	Note that 
        \begin{align*}
            &~q\ceil*{\frac{i}{q^n+1}} + (q^2-q)\ceil*{\frac{i(q+1)}{q^n+1}}+\floor*{\frac{-iq^3}{q^n+1}}\\
            =~&q + (q^2-q)\ceil*{\frac{i}{m}}-\floor*{\frac{iq^3}{q^n+1}}.
        \end{align*}
	Setting $\lambda=1$ in Corollary \ref{Gamma_some_equal}, the desired result follows.
\end{proof}
\begin{example}
    Take $q = 2$, $n=3$ in the equation \eqref{equation3}. Consider the Kummer extension $F = \mathbb{F}_{2^6}(x,y)/\mathbb{F}_{2^6}(x)$ defined by $y^{9} = (x^2+x)(x^2+x-1)^3$. Then by Proposition \ref{GGS_gap}, we have $H(Q_\infty)$, $H(Q_1)$ and $H(Q_2)$ are symmetric. Moreover, we have 
    $$G(Q_\infty) = G(Q_1) = G(Q_2) = \{1,2,3,4,5,7,10,11,13,19\} \text{ and }$$
    $$H(Q_\infty) = H(Q_1) = H(Q_2) = \langle28,20,12,22,14,6,16,8,9\rangle.$$
    By Proposition \ref{GGS_Gamma}, we obtain
    $$\Gamma(Q_1,Q_2) = \left\{\begin{array}{l}
		(1,19),(19,1),(2,11),(11,2),(3,3),\\
        (4,13),(13,4),(5,5),(7,7),(10,10)
	\end{array}\right\}.$$
\end{example}

\subsection{Function fields of subcovers of the BM curve}
Let $q$ be a prime power and let $n\geq 3$ be an odd integer. Let $m$ be a divisor of $q^n+1$ and $d$ be a divisor of $q+1$ such that $\gcd(m,d(q-1)) = 1$.
In this subsection, we study the $\mathbb{F}_{q^{2n}}$-maximal curve $\mathcal{Y}_{d,m}$ defined by the affine equation 
	\begin{equation}\label{equation4}
		\mathcal{Y}_{d,m}:~~y^m = x^d(x^d-1)\left(\frac{1-x^{d(q-1)}}{x^d-1}\right)^{q+1}.
	\end{equation}
This curve was introduced in \cite[Theorem 3.1]{mendozaExplicitEquations2022}, and it is a subcover of the BM curve given by Beelen and Montanucci in \cite{beelenNewFamily2018}. Note that $\mathbb{F}_{q^{2n}}(x,y)/\mathbb{F}_{q^{2n}}(x)$ is a Kummer extension.
 We write 
    $$x^d(x^d-1)\left(\frac{1-x^{d(q-1)}}{x^d-1}\right)^{q+1} = x^d\left(\prod_{i=1}^d(x-\alpha_i)\right)\left(\prod_{i=1}^{d(q-2)}(x-\beta_i)^{q+1}\right) ,$$
    where $\alpha_1,\cdots,\alpha_d,\beta_1,\cdots,\beta_{d(q-2)}\in \mathbb{F}_{q^{2n}}$.
    Let $P_\infty$ be the pole of $x$ in $\mathbb{F}_{q^{2n}}(x)$. For each $1\leq i\leq d$, let $P_i$ be the zero of $x-\alpha_i$ in $\mathbb{F}_{q^{2n}}(x)$.
    It is obvious that $P_i$ $(1\leq i\leq d)$ are totally ramified in $\mathbb{F}_{q^{2n}}(x,y)/\mathbb{F}_{q^{2n}}(x)$. Denote by $Q_i$ the only place lying over $P_i$ for each $1\leq i\leq d$.

\begin{proposition}\label{BM_GH}
    (i) Let $1\leq k\leq d$. Then $G(Q_k)$ is given by 
    \begin{align*}
    \Big\{ mj + i \mid &1\leq i\leq m-1, 0\leq j\leq d+\ceil*{\frac{di}{m}}+d(q-2)\ceil*{\frac{i(q+1)}{m}} - \floor*{\frac{idq(q-1)}{m}}-2\Big\},
    \end{align*}
    and $H(Q_k)$ is given by
     \begin{align*}
    \Big\langle m,m\left(d+\ceil*{\frac{di}{m}}+d(q-2)\ceil*{\frac{i(q+1)}{m}} - \floor*{\frac{idq(q-1)}{m}}-1\right) + i:1\leq i\leq m-1\Big\rangle.
    \end{align*}

	(ii) Let 
	$$2\leq s\leq \min\left\{d, \max_{1\leq i\leq m-1}\left\{ d+\ceil*{\frac{di}{m}}+d(q-2)\ceil*{\frac{i(q+1)}{m}} - \floor*{\frac{idq(q-1)}{m}}\right\}\right\}.$$
	Then
	\begin{align*}
		\Gamma(Q_1,\cdots,Q_s) = \Big\{(&mj_1 + i,\cdots,mj_s + i) \mid 1\leq i\leq m-1,\,j_k\geq 0 \text{ for } 1\leq k\leq s,\\
		&\sum_{k=1}^s j_k = d-s+\ceil*{\frac{di}{m}}+d(q-2)\ceil*{\frac{i(q+1)}{m}} - \floor*{\frac{idq(q-1)}{m}}\Big\}.
	\end{align*}
\end{proposition}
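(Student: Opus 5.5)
The plan is to recast \eqref{equation4} in the form \eqref{kummer_extension} and then specialize Proposition \ref{gap_set}, Corollary \ref{semigroup} and Corollary \ref{Gamma_some_equal}, exactly as in the proofs of Propositions \ref{GGS_gap} and \ref{GGS_Gamma}.

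\textbf{Step 1: the exponents.} Write the right-hand side of \eqref{equation4} as $x^d\prod_{i=1}^d(x-\alpha_i)\prod_{i=1}^{d(q-2)}(x-\beta_i)^{q+1}$. This uses that $\frac{1-x^{d(q-1)}}{x^d-1}$ is, up to sign, $1+x^d+\cdots+x^{d(q-2)}$, whose roots are distinct and different from $0$ and from the $\alpha_i$. The sign is absorbed into $\alpha$. So the places are $x$ with exponent $d$ (degree one), the $d$ places $x-\alpha_i$ with exponent $1$, and the $d(q-2)$ places $x-\beta_i$ with exponent $q+1$. Hence
$$\lambda_0=-(d+d+d(q-2)(q+1))=-dq(q-1).$$
Each $Q_k$ has $\lambda_k=1$, $d_k=1$ and $\gcd(m,1)=1$, so it is a totally ramified place of degree one, and the inverse of $\lambda_k$ modulo $m$ is $\lambda=1$. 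One also needs $f$ not to be a $e$-th power for $e\mid m$, $e\ge2$; this holds because an exponent $1$ occurs.

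\textbf{Step 2: the key sum.} With $\lambda=1$, for $1\le i\le m-1$ we get
$$\sum_{k}\left\lceil \frac{i\lambda_k}{m}\right\rceil d_k=\left\lceil\frac{di}{m}\right\rceil+d\left\lceil\frac{i}{m}\right\rceil+d(q-2)\left\lceil\frac{i(q+1)}{m}\right\rceil+\left\lceil\frac{-idq(q-1)}{m}\right\rceil.$$
Since $1\le i\le m-1$, we have $\lceil i/m\rceil=1$, and by Lemma \ref{ceil_and_floor} the last term equals $-\lfloor idq(q-1)/m\rfloor$. This gives exactly the quantity
$$d+\left\lceil\frac{di}{m}\right\rceil+d(q-2)\left\lceil\frac{i(q+1)}{m}\right\rceil-\left\lfloor\frac{idq(q-1)}{m}\right\rfloor$$
in the statement. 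Part (i) then follows immediately from \eqref{gap_set_simple_form} and Corollary \ref{semigroup}.

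\textbf{Step 3: part (ii).} Apply Corollary \ref{Gamma_some_equal} with $\ell_0,\dots,\ell_{s-1}$ the indices of $Q_1,\dots,Q_s$. All of these have $\lambda_{\ell_k}=1$, so they are congruent modulo $m$, and we take $t=s-1$ (shifting indices from $0,\dots,s-1$ to $1,\dots,s$) and $\lambda=1$. The sum over $t+1\le k$ is then empty, and the condition becomes $\sum_k j_k=(\text{Step 2 quantity})-s$, which is exactly the displayed set.

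\textbf{Hypotheses of Corollary \ref{Gamma_some_equal}.} The corollary is stated for $s+1$ places, so here its "$s$" equals our $s-1$, and we need $1\le s-1\le \max_i(\cdots)-1$; this is what the assumed upper bound provides. We also need $s-1\le\bar r$, which follows from $s\le d\le r$ and $|K|=q^{2n}$ being large.

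\textbf{Main obstacle.} The only delicate step is the bookkeeping in Step 1: showing the factorization has distinct roots with the stated multiplicities and no overlap with $x=0$, and getting $\lambda_0$ right. For this I would use $\gcd(d(q-1),p)=1$, together with the fact that the $d(q-1)$-th roots of unity split into the $d$-th roots of unity and the rest.
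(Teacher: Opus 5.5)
Your proposal is correct and follows essentially the same route as the paper. The paper also computes $\lambda_0=-dq(q-1)$ and reduces $\sum_k\lceil i\lambda_k/m\rceil d_k$ with $\lambda=1$ to $d+\lceil di/m\rceil+d(q-2)\lceil i(q+1)/m\rceil-\lfloor idq(q-1)/m\rfloor$. It then invokes Proposition \ref{gap_set}, Corollary \ref{semigroup}, and Corollary \ref{Gamma_some_equal} with $\lambda=1$ and all $s$ places in the congruent block. Your extra bookkeeping fills in details the paper leaves implicit: separability of $x^{d(q-1)}-1$, the non-power condition, the index shift, and the check $s-1\le\bar r$.
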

\begin{proof}
    Note that $d+d + d(q-2)(q+1) = dq(q-1)$. We obtain 
        \begin{align*}
            &~\ceil*{\frac{di}{m}} + d\ceil*{\frac{i}{m}}+d(q-2)\ceil*{\frac{i(q+1)}{m}} + \ceil*{\frac{-idq(q-1)}{m}}\\
            =~&d+\ceil*{\frac{di}{m}}+d(q-2)\ceil*{\frac{i(q+1)}{m}} - \floor*{\frac{idq(q-1)}{m}}.
        \end{align*}
    Then the first statement follows from Proposition \ref{gap_set} and Corollary \ref{semigroup}.
	The second statement follows by setting $\lambda=1$ in Corollary \ref{Gamma_some_equal}. 
\end{proof}

\begin{example}
    Take $q=3$, $n=3$, $d=4$, $m=7$ in the equation \eqref{equation4}.
    Consider the Kummer extension $F = \mathbb{F}_{3^6}(x,y)/\mathbb{F}_{3^6}(x)$ defined by $y^{7} = x^4(x^4-1)(x^4+1)^4$. Then by Proposition \ref{BM_GH}, for each $1\leq i\leq 4$, we have 
    $$G(Q_i) = \left\{\begin{array}{l}
		1, 2, 3, 4, 5, 6, 8, 9, 10, 11, 13, 15,\\
        16, 17, 18, 20, 22, 23, 25, 29, 30, 32, 37, 44
	\end{array}\right\}, \text{ and }$$
    $$H(Q_i) = \langle7,36,51,24,39,12,27\rangle.$$
    Moreover, we obtain that 
    $$\Gamma(Q_1,Q_2) = \left\{\begin{array}{l}
    (1,29),(8,22),(15,15),(22,8),(29,1),(2,44),(9,37),(16,30),\\
    (23,23),(30,16),(37,9),(44,2),(3,17),(10,10),(17,3),(4,32),\\
    (11,25),(18,18),(25,11),(32,4),(5,5),(6,20),(13,13),(20,6)
    \end{array}\right\},$$
    $$\Gamma(Q_1,Q_2,Q_3) = \left\{\begin{array}{l}
    (1,1,22),(1,8,15),(1,15,8),(1,22,1),(8,1,15),(8,8,8),\\
    (8,15,1),(15,1,8),(15,8,1),(22,1,1),(2,2,37),(2,9,30),\\
    (2,16,23),(2,23,16),(2,30,9),(2,37,2),(9,2,30),(9,9,23),\\
    (9,16,16),(9,23,9),(9,30,2),(16,2,23),(16,9,16),(16,16,9),\\
    (16,23,2),(23,2,16),(23,9,9),(23,16,2),(30,2,9),(30,9,2),\\
    (37,2,2),(3,3,10),(3,10,3),(10,3,3),(4,4,25),(4,11,18),\\
    (4,18,11),(4,25,4),(11,4,18),(11,11,11),(11,18,4),\\
    (18,4,11), (18,11,4),(25,4,4),(6,6,13),(6,13,6),(13,6,6)
    \end{array}\right\},$$
    and $\Gamma(Q_1,Q_2,Q_3,Q_4)$ is given by 
    $$\left\{\begin{array}{l}
    (1,1,1,15),(1,1,8,8),(1,1,15,1),(1,8,1,8),(1,8,8,1),(1,15,1,1),\\
    (8,1,1,8),(8,1,8,1),(8,8,1,1),(15,1,1,1),(2,2,2,30),(2,2,9,23),\\
    (2,2,16,16),(2,2,23,9),(2,2,30,2),(2,9,2,23),(2,9,9,16),(2,9,16,9),\\
    (2,9,23,2),(2,16,2,16),(2,16,9,9),(2,16,16,2),(2,23,2,9),(2,23,9,2),\\
    (2,30,2,2),(9,2,2,23),(9,2,9,16),(9,2,16,9),(9,2,23,2),(9,9,2,16),\\
    (9,9,9,9),(9,9,16,2),(9,16,2,9),(9,16,9,2),(9,23,2,2),(16,2,2,16),\\
    (16,2,9,9),(16,2,16,2),(16,9,2,9),(16,9,9,2),(16,16,2,2),(23,2,2,9),\\
    (23,2,9,2),(23,9,2,2),(30,2,2,2),(3,3,3,3),(4,4,4,18),(4,4,11,11),\\
    (4,4,18,4),(4,11,4,11),(4,11,11,4),(4,18,4,4),(11,4,4,11),(11,4,11,4),\\
    (11,11,4,4),(18,4,4,4),(6,6,6,6)
    \end{array}\right\}.$$
\end{example}
\section*{Acknowledgments}
This work is supported by the National Natural Science Foundation of China (No. 12441107),  Guangdong Basic and Applied Basic Research Foundation (No. \seqsplit{2025A1515011764}), and the National Key Research and Development Program of China (No.~\seqsplit{2025YFA1017100}). 

\bibliographystyle{elsarticle-num}
\bibliography{refs}

@book{stichtenothAlgebraicFunctionFields2009,
  title = {Algebraic {{Function Fields}} and {{Codes}}},
  author = {Stichtenoth, Henning},
  year = {2009},
  series = {Graduate {{Texts}} in {{Mathematics}}},
  number = {254},
  publisher = {Springer Berlin Heidelberg},
  doi = {10.1007/978-3-540-76878-4},
  isbn = {978-3-540-76877-7 978-3-540-76878-4},
  edition = {Second},
  address = {Berlin}
}

@book{arbarelloGeometryAlgebraic1985,
  title = {Geometry of {{Algebraic Curves}}},
  author = {Arbarello, E. and Cornalba, M. and Griffiths, P. A. and Harris, J.},
  year = 1985,
  series = {Grundlehren Der Mathematischen {{Wissenschaften}}},
  volume = {267},
  publisher = {Springer New York},
  address = {New York, NY},
  doi = {10.1007/978-1-4757-5323-3},
  isbn = {978-1-4419-2825-2 978-1-4757-5323-3}
}

@book{villasalvadorTopicsTheory2006,
  title = {Topics in the Theory of Algebraic Function Fields},
  author = {Villa Salvador, Gabriel Daniel},
  year = 2006,
  series = {Mathematics : Theory \& Applications},
  publisher = {Birkh\"auser},
  address = {Boston ; Berlin},
  doi = {10.1007/0-8176-4515-2},
  isbn = {978-0-8176-4480-2},
}

@article{carvalhoGoppaCodes2005,
  title = {On {{Goppa Codes}} and {{Weierstrass Gaps}} at {{Several Points}}},
  author = {Carvalho, C{\'i}cero and Torres, Fernando},
  year = 2005,
  journal = {Designs, Codes and Cryptography},
  volume = {35},
  number = {2},
  pages = {211--225},
  issn = {0925-1022, 1573-7586},
  doi = {10.1007/s10623-005-6403-4}
}

@article{duursmaImprovedTwoPoint2011,
  title = {Improved {{Two-Point Codes}} on {{Hermitian Curves}}},
  author = {Duursma, Iwan M. and Kirov, Radoslav},
  year = 2011,
  journal = {IEEE Transactions on Information Theory},
  volume = {57},
  number = {7},
  pages = {4469--4476},
  issn = {0018-9448, 1557-9654},
  doi = {10.1109/TIT.2011.2146410}
}

@article{matthewsWeierstrassPairsMinimum2001,
  title = {Weierstrass {{Pairs}} and {{Minimum Distance}} of {{Goppa Codes}}},
  author = {Matthews, Gretchen L.},
  year = 2001,
  journal = {Designs, Codes and Cryptography},
  volume = {22},
  number = {2},
  pages = {107--121},
  issn = {1573-7586},
  doi = {10.1023/A:1008311518095}
}

@article{matthewsCodesSuzuki2004,
  title = {Codes {{From}} the {{Suzuki Function Field}}},
  author = {Matthews, G.L.},
  year = 2004,
  journal = {IEEE Transactions on Information Theory},
  volume = {50},
  number = {12},
  pages = {3298--3302},
  issn = {0018-9448},
  doi = {10.1109/TIT.2004.838102}
}

@article{sepulvedaWeierstrassSemigroup2014,
  title = {Weierstrass Semigroup and Codes over the Curve $y^q+y = x^{q^r+1}$},
  author = {Sep{\'u}lveda, Alonso and Tizziotti, Guilherme},
  year = 2014,
  journal = {Advances in Mathematics of Communications},
  volume = {8},
  number = {1},
  pages = {67--72},
  issn = {1930-5346},
  doi = {10.3934/amc.2014.8.67}
}

@article{castellanosWeierstrassSemigroupsPure2024,
  title = {Weierstrass Semigroups, Pure Gaps and Codes on Function Fields},
  author = {Castellanos, Alonso S. and Mendoza, Erik A. R. and Quoos, Luciane},
  year = {2024},
  journal = {Designs, Codes and Cryptography},
  volume = {92},
  number = {5},
  pages = {1219--1242},
  issn = {0925-1022, 1573-7586},
  doi = {10.1007/s10623-023-01339-w}
}

@article{mendozaKummerExtensions2023,
  title = {On {{Kummer}} Extensions with One Place at Infinity},
  author = {Mendoza, Erik A.R.},
  year = 2023,
  journal = {Finite Fields and Their Applications},
  volume = {89},
  pages = {102209},
  issn = {10715797},
  doi = {10.1016/j.ffa.2023.102209}
}

@article{beelenNewFamily2018,
  title = {A New Family of Maximal Curves},
  shorttitle = {A New Family of Maximal Curves},
  author = {Beelen, Peter and Montanucci, Maria},
  year = 2018,
  journal = {Journal of the London Mathematical Society},
  volume = {98},
  number = {3},
  pages = {573--592},
  issn = {00246107},
  doi = {10.1112/jlms.12144}
}

@article{abdonWeierstrassPoints2019,
  title = {Weierstrass Points on {{Kummer}} Extensions},
  author = {Abd{\'o}n, Miriam and Borges, Herivelto and Quoos, Luciane},
  year = 2019,
  journal = {Advances in Geometry},
  volume = {19},
  number = {3},
  pages = {323--333},
  issn = {1615-7168, 1615-715X},
  doi = {10.1515/advgeom-2018-0021}
}

@article{bartoliWeierstrassSemigroups2021a,
  title = {Weierstrass Semigroups at Every Point of the {{Suzuki}} Curve},
  author = {Bartoli, Daniele and Montanucci, Maria and Zini, Giovanni},
  year = 2021,
  journal = {Acta Arithmetica},
  volume = {197},
  number = {1},
  pages = {1--20},
  issn = {0065-1036, 1730-6264},
  doi = {10.4064/aa181203-24-2}
}

@article{beelenWeierstrassSemigroups2021,
  title = {Weierstrass Semigroups on the {{Skabelund}} Maximal Curve},
  author = {Beelen, Peter and Landi, Leonardo and Montanucci, Maria},
  year = 2021,
  journal = {Finite Fields and Their Applications},
  volume = {72},
  pages = {101811},
  issn = {10715797},
  doi = {10.1016/j.ffa.2021.101811}
}

@article{castellanosOneTwoPointCodes2016,
  title = {One- and {{Two-Point Codes Over Kummer Extensions}}},
  author = {Castellanos, Alonso S. and Masuda, Ariane M. and Quoos, Luciane},
  year = 2016,
  journal = {IEEE Transactions on Information Theory},
  volume = {62},
  number = {9},
  pages = {4867--4872},
  issn = {0018-9448, 1557-9654},
  doi = {10.1109/TIT.2016.2583437}
}

@article{montanucciAGCodes2020,
  title = {{{AG}} Codes from the Second Generalization of the {{GK}} Maximal Curve},
  author = {Montanucci, Maria and Pallozzi Lavorante, Vincenzo},
  year = 2020,
  journal = {Discrete Mathematics},
  volume = {343},
  number = {5},
  pages = {111810},
  issn = {0012365X},
  doi = {10.1016/j.disc.2020.111810}
}

@article{maAutomorphismGroups2016a,
  title = {On Automorphism Groups of Cyclotomic Function Fields over Finite Fields},
  author = {Ma, Liming and Xing, Chaoping and Yeo, Sze Ling},
  year = 2016,
  journal = {Journal of Number Theory},
  volume = {169},
  pages = {406--419},
  issn = {0022314X},
  doi = {10.1016/j.jnt.2016.05.026}
}

@article{montanucciAutomorphismGroup2024,
  title = {On the Automorphism Group of a Family of Maximal Curves Not Covered by the {{Hermitian}} Curve},
  author = {Montanucci, Maria and Tizziotti, Guilherme and Zini, Giovanni},
  year = 2024,
  journal = {Finite Fields and Their Applications},
  volume = {99},
  pages = {102498},
  issn = {10715797},
  doi = {10.1016/j.ffa.2024.102498}
}

@article{beelenFamilyNonisomorphic2025,
  title = {A Family of Non-Isomorphic Maximal Function Fields},
  author = {Beelen, Peter and Montanucci, Maria and Niemann, Jonathan and Quoos, Luciane},
  year = 2025,
  journal = {Mathematische Zeitschrift},
  volume = {309},
  number = {2},
  pages = {19},
  issn = {0025-5874, 1432-1823},
  doi = {10.1007/s00209-024-03650-1}
}

@article{niemannNonisomorphicMaximal2025,
  title = {Non-Isomorphic Maximal Function Fields of Genus $q - 1$},
  author = {Niemann, Jonathan},
  year = 2025,
  journal = {Finite Fields and Their Applications},
  volume = {106},
  pages = {102618},
  issn = {10715797},
  doi = {10.1016/j.ffa.2025.102618}
}

@article{kirfelMinimumDistance1995a,
  title = {The Minimum Distance of Codes in an Array Coming from Telescopic Semigroups},
  author = {Kirfel, C. and Pellikaan, R.},
  year = 1995,
  journal = {IEEE Transactions on Information Theory},
  volume = {41},
  number = {6},
  pages = {1720--1732},
  issn = {00189448},
  doi = {10.1109/18.476245}
}

@article{guneriAutomorphismGroup2013,
  title = {The Automorphism Group of the Generalized {{Giulietti}}--{{Korchm\'aros}} Function Field},
  author = {G{\"u}neri, Cem and {\"O}zdemiry, Mehmet and Stichtenoth, Henning},
  year = 2013,
  journal = {Advances in Geometry},
  volume = {13},
  number = {2},
  pages = {369--380},
  issn = {1615-7168, 1615-715X},
  doi = {10.1515/advgeom-2012-0040}
}

@article{tafazolianCurveYn2019,
  title = {On the Curve ${{Y}}^n = {{X}}^\ell({{X}}^m+1)$ over Finite Fields},
  author = {Tafazolian, Saeed and Torres, Fernando},
  year = 2019,
  journal = {Advances in Geometry},
  volume = {19},
  number = {2},
  pages = {263--268},
  issn = {1615-7168, 1615-715X},
  doi = {10.1515/advgeom-2017-0041}
}

@article{huMultipointCodesKummer2018,
  title = {Multi-Point Codes over {{Kummer}} Extensions},
  author = {Hu, Chuangqiang and Yang, Shudi},
  year = 2018,
  journal = {Designs, Codes and Cryptography},
  volume = {86},
  number = {1},
  pages = {211--230},
  issn = {0925-1022, 1573-7586},
  doi = {10.1007/s10623-017-0335-7}
}

@incollection{matthewsWeierstrassSemigroup2004,
  title = {The {{Weierstrass Semigroup}} of an $m$-Tuple of {{Collinear Points}} on a {{Hermitian Curve}}},
  booktitle = {Finite {{Fields}} and {{Applications}}},
  author = {Matthews, Gretchen L.},
  editor = {Goos, Gerhard and Hartmanis, Juris and Van Leeuwen, Jan and Mullen, Gary L. and Poli, Alain and Stichtenoth, Henning},
  year = 2004,
  volume = {2948},
  pages = {12--24},
  publisher = {Springer Berlin Heidelberg},
  address = {Berlin, Heidelberg},
  doi = {10.1007/978-3-540-24633-6_2},
  isbn = {978-3-540-21324-6 978-3-540-24633-6}
}

@article{castellanosWeierstrassSemigroup2018,
  title = {On {{Weierstrass}} Semigroup at $m$ Points on Curves of the Form $f(y)=g(x)$},
  author = {Castellanos, A.S. and Tizziotti, G.},
  year = 2018,
  journal = {Journal of Pure and Applied Algebra},
  volume = {222},
  number = {7},
  pages = {1803--1809},
  issn = {00224049},
  doi = {10.1016/j.jpaa.2017.08.007}
}

@article{kimIndexWeierstrass1994,
  title = {On the Index of the {{Weierstrass}} Semigroup of a Pair of Points on a Curve},
  author = {Kim, Seon Jeong},
  year = 1994,
  journal = {Archiv der Mathematik},
  volume = {62},
  number = {1},
  pages = {73--82},
  issn = {0003-889X, 1420-8938},
  doi = {10.1007/BF01200442}
}

@article{hommaWeierstrassSemigroup1996,
  title = {The {{Weierstrass}} Semigroup of a Pair of Points on a Curve},
  author = {Homma, Masaaki},
  year = 1996,
  journal = {Archiv der Mathematik},
  volume = {67},
  number = {4},
  pages = {337--348},
  issn = {0003-889X, 1420-8938},
  doi = {10.1007/BF01197599}
}

@article{maharajCodeConstruction2004,
  title = {Code {{Construction}} on {{Fiber Products}} of {{Kummer Covers}}},
  author = {Maharaj, H.},
  year = 2004,
  journal = {IEEE Transactions on Information Theory},
  volume = {50},
  number = {9},
  pages = {2169--2173},
  issn = {0018-9448},
  doi = {10.1109/TIT.2004.833356}
}

@article{matthewsWeierstrassSemigroups2005,
  title = {Weierstrass {{Semigroups}} and {{Codes}} from a {{Quotient}} of the {{Hermitian Curve}}},
  author = {Matthews, Gretchen L.},
  year = 2005,
  journal = {Designs, Codes and Cryptography},
  volume = {37},
  number = {3},
  pages = {473--492},
  issn = {0925-1022, 1573-7586},
  doi = {10.1007/s10623-004-4038-5}
}

@incollection{matthewsWeierstrassSemigroups2009,
  title = {On {{Weierstrass Semigroups}} of {{Some Triples}} on {{Norm-Trace Curves}}},
  booktitle = {Coding and {{Cryptology}}},
  author = {Matthews, Gretchen L.},
  editor = {Chee, Yeow Meng and Li, Chao and Ling, San and Wang, Huaxiong and Xing, Chaoping},
  year = 2009,
  volume = {5557},
  pages = {146--156},
  publisher = {Springer Berlin Heidelberg},
  address = {Berlin, Heidelberg},
  doi = {10.1007/978-3-642-01877-0_13},
  isbn = {978-3-642-01813-8 978-3-642-01877-0}
}

@incollection{matthewsMinimalGenerating2010,
  title = {Minimal Generating Sets of {{Weierstrass}} Semigroups of Certain $m$-Tuples on the Norm-Trace Function Field},
  booktitle = {Contemporary {{Mathematics}}},
  author = {Matthews, Gretchen L. and Peachey, Justin D.},
  editor = {McGuire, Gary and Mullen, Gary L. and Panario, Daniel and Shparlinski, Igor E.},
  year = 2010,
  volume = {518},
  pages = {315--326},
  publisher = {American Mathematical Society},
  address = {Providence, Rhode Island},
  doi = {10.1090/conm/518/10214},
  isbn = {978-0-8218-4786-2 978-0-8218-8197-2}
}

@article{yangWeierstrassSemigroups2017,
  title = {Weierstrass Semigroups from {{Kummer}} Extensions},
  author = {Yang, Shudi and Hu, Chuangqiang},
  year = 2017,
  journal = {Finite Fields and Their Applications},
  volume = {45},
  pages = {264--284},
  issn = {10715797},
  doi = {10.1016/j.ffa.2016.12.005}
}

@article{tizziottiWeierstrassSemigroup2018,
  title = {Weierstrass {{Semigroup}} and {{Pure Gaps}} at {{Several Points}} on the {{GK Curve}}},
  author = {Tizziotti, G. and Castellanos, A. S.},
  year = 2018,
  journal = {Bulletin of the Brazilian Mathematical Society, New Series},
  volume = {49},
  number = {2},
  pages = {419--429},
  issn = {1678-7544, 1678-7714},
  doi = {10.1007/s00574-017-0059-3}
}

@article{castellanosWeierstrassSemigroup2020,
  title = {Weierstrass Semigroup at $m+1$ Rational Points in Maximal Curves Which Cannot Be Covered by the {{Hermitian}} Curve},
  author = {Castellanos, Alonso Sep{\'u}lveda and {Bras-Amor{\'o}s}, Maria},
  year = 2020,
  journal = {Designs, Codes and Cryptography},
  volume = {88},
  number = {8},
  pages = {1595--1616},
  issn = {0925-1022, 1573-7586},
  doi = {10.1007/s10623-020-00757-4}
}

@article{mendozaExplicitEquations2022,
  title = {Explicit Equations for Maximal Curves as Subcovers of the {{BM}} Curve},
  author = {Mendoza, Erik A.R. and Quoos, Luciane},
  year = 2022,
  journal = {Finite Fields and Their Applications},
  volume = {77},
  pages = {101945},
  issn = {10715797},
  doi = {10.1016/j.ffa.2021.101945}
}

@article{castellanosGeneralizedWeierstrass2026,
  title = {On Generalized {{Weierstrass}} Semigroups in Arbitrary {{Kummer}} Extensions of $\mathbb{F}_q (x)$},
  author = {Castellanos, Alonso S. and Mendoza, Erik and Tizziotti, Guilherme},
  year = 2026,
  journal = {Finite Fields and Their Applications},
  volume = {112},
  pages = {102808},
  issn = {10715797},
  doi = {10.1016/j.ffa.2026.102808}
}

@article{garciaGeneralizationGiulietti2010,
title = {A generalization of the {{Giulietti}}--{{Korchm\'aros}} maximal curve},
author = {Arnaldo Garcia and Cem Güneri and Henning Stichtenoth},
pages = {427--434},
volume = {10},
number = {3},
journal = {Advances in Geometry},
doi = {10.1515/advgeom.2010.020},
year = {2010},
}

@article{giuliettiNewFamily2009,
  title = {A New Family of Maximal Curves over a Finite Field},
  author = {Giulietti, Massimo and Korchm{\'a}ros, G{\'a}bor},
  year = 2009,
  journal = {Mathematische Annalen},
  volume = {343},
  number = {1},
  pages = {229--245},
  issn = {0025-5831, 1432-1807},
  doi = {10.1007/s00208-008-0270-z}
}

@article{yangWeierstrassSemigroups2024a,
  title = {Weierstrass Semigroups on the Third Function Field in a Tower Attaining the {{Drinfeld-Vl\u{a}du\c t}} Bound},
  author = {Yang, Shudi and Hu, Chuangqiang},
  year = 2024,
  journal = {Advances in Mathematics of Communications},
  volume = {18},
  number = {4},
  pages = {1051--1083},
  issn = {1930-5346, 1930-5338},
  doi = {10.3934/amc.2022066}
}

@article{huMultipointCodes2020,
  title = {Multi-Point Codes from the {{GGS}} Curves},
  author = {Hu, Chuangqiang and Yang, Shudi},
  year = 2020,
  journal = {Advances in Mathematics of Communications},
  volume = {14},
  number = {2},
  pages = {279--299},
  issn = {1930-5338},
  doi = {10.3934/amc.2020020}
}

@article{beelenWeierstrassSemigroups2023,
  title = {Weierstrass Semigroups and Automorphism Group of a Maximal Curve with the Third Largest Genus},
  author = {Beelen, Peter and Montanucci, Maria and Vicino, Lara},
  year = 2023,
  journal = {Finite Fields and Their Applications},
  volume = {92},
  pages = {102300},
  issn = {10715797},
  doi = {10.1016/j.ffa.2023.102300}
}

@article{beelenWeierstrassSemigroups2026a,
  title = {Weierstrass Semigroups and Automorphism Group of a Maximal Function Field with the Third Largest Possible Genus, $q \equiv 1 \pmod 3$},
  author = {Beelen, Peter and Montanucci, Maria and Vicino, Lara},
  year = 2026,
  journal = {Finite Fields and Their Applications},
  volume = {109},
  pages = {102701},
  issn = {10715797},
  doi = {10.1016/j.ffa.2025.102701}
}

@article{beelenWeierstrassSemigroups2026,
  title = {Weierstrass Semigroups and Automorphism Group of a Maximal Function Field with the Third Largest Possible Genus, $q \equiv 0 \pmod 3$},
  author = {Beelen, Peter and Montanucci, Maria and Vicino, Lara},
  year = 2026,
  journal = {Finite Fields and Their Applications},
  volume = {110},
  pages = {102729},
  issn = {10715797},
  doi = {10.1016/j.ffa.2025.102729}
}

@misc{cotterillGapSets2025,
  title = {On Gap Sets in Arbitrary {{Kummer}} Extensions of ${{K}}(x)$},
  author = {Cotterill, Ethan and Mendoza, Erik A. R. and Speziali, Pietro},
  year = {2025},
  number = {arXiv:2506.19169},
  eprint = {2506.19169},
  primaryclass = {math},
  publisher = {arXiv},
  doi = {10.48550/arXiv.2506.19169},
  archiveprefix = {arXiv},
}
    
\end{document}